\numberwithin{equation}{section}
\newtheorem{teor}{Theorem}[section]
\newtheorem{lem}[teor]{Lemma}
\newtheorem{coro}[teor]{Corollary}
\newtheorem{propo}[teor]{Proposition}
\theoremstyle{definition}
\theoremstyle{remark}
\newtheorem{obser}[teor]{Remark}
\newcommand{\C}{{\mathbb C}}
\newcommand{\N}{{\mathbb N}}
\newcommand{\R}{{\mathbb R}}
\newcommand{\h}{\mathcal{H_{\gamma}}}
\newcommand{\eps}{\varepsilon}
\renewcommand{\epsilon}{\varepsilon}
\renewcommand{\Re}{\mathop{\text{\upshape{Re}}}}
\newcommand{\md}[1]{\color{black}#1\color{black}}
\newcommand{\mdb}[1]{\color{black}#1\color{black}}
\begin{document}

\title[Coupled thermoelastic plate-membrane system]{Long-time asymptotics for a coupled thermoelastic plate-membrane system}\thanks{Financial support through COLCIENCIAS via Project 121571250194.}

\author{Bienvenido Barraza Mart\'inez}
\address{B.\ Barraza Mart\'inez, Universidad del Norte, Departamento de Matem\'aticas y Estad\'istica, Barranquilla, Colombia}
\email{bbarraza@uninorte.edu.co}

\author{Robert Denk}
\address{R.\ Denk, Universit\"at Konstanz, Fachbereich f\"ur Mathematik und Statistik, Konstanz, Germany}
\email{robert.denk@uni-konstanz.de}

\author{Jonathan Gonz\'alez Ospino}
\address{J.\ Gonz\'alez Ospino, Universidad del Norte, Departamento de Matem\'aticas y Estad\'istica, Barranquilla, Colombia}
\email{gjonathan@uninorte.edu.co}

\author{Jairo Hern\'andez Monz\'on}
\address{J.\ Hern\'andez Monz\'on, Universidad del Norte, Departamento de Matem\'aticas y Estad\'istica, Barranquilla, Colombia}
\email{jahernan@uninorte.edu.co}

\author{Sophia Rau}
\address{S.\ Rau, Universit\"at Konstanz, Fachbereich f\"ur Mathematik und Statistik, Konstanz, Germany}
\email{Sophia.Rau@uni-konstanz.de}

\renewcommand{\shortauthors}{B. Barraza Mart\'inez et al.}

\begin{abstract}
In this paper we consider a transmission problem for a system of a
 thermoelastic plate with (or without) rotational inertia term coupled with a membrane with different variants of damping for the plate and/or the membrane.  We prove
 well-posedness of the problem and higher regularity of the solution and study the asymptotic behaviour of the solution, depending on the damping and on the presence of the rotational term.\\

\noindent Keywords: thermoelastic plate-membrane system, transmission pro\-blem, exponential stability, polynomial stability.\\

\noindent MSC2010: 35M33, 35B35, 35B40, 74K15, 74K20.

\end{abstract}

\maketitle

\section{Introduction}
Transmission problems appear frequently in different fields of physics and engineering, e.g., in solid mechanics of  composed materials, in  processes of electromagnetism on ferromagnetic materials with dielectric constants, in the vibration of membranes, and  in coupled plates (see \cite{borsuk2010transmission}). Structures compound by a finite number of interconnected flexible elements, such as beams, plates, shells, membranes and combinations of them were studied in different settings in  \cite{ammari2010stabilization}, \cite {balmes2002tools}, \cite{denk2018exponential}, \cite{kleinman1988single}, \cite{lagnese1994modeling}, \cite{leissa1969vibration}, \cite{munoz2007asymptotic}, \cite{roy2006damping}.

The classical linear model of thermoelastic plates due to Kirchhoff  is given by
\begin{align*}
\rho_1 u_{tt}-\gamma\Delta u_{tt}+\beta_1\Delta^{2} u+\mu\Delta\theta & = 0
\quad\text{in} \quad\Omega_1\times(0, \infty),\\
\rho_0\theta_{t}-\beta_0\Delta\theta - \mu\Delta u_{t}  & =0 \quad\text{in}
\quad\Omega_1\times(0, \infty),
\end{align*}
where $\rho_0$, $\rho_1$, $\beta_0$, $\beta_1$ and $\mu$ are positive constants, and $\gamma\geq0$. Here, $\Omega_1\subset\R^2$ is the region occupied by the middle surface of the plate, $u$ represents the \md{transverse displacements of the points on the middle surface of } the plate, and $\theta$ is the difference of temperature on the plate with respect to a reference temperature. For the physical model and the deduction of the these equations, see  \cite{lagnese1988modelling}.

In the present paper, we study the situation where the thermoelastic plate in $\Omega_1$ is coupled with an elastic membrane in the region $\Omega_2$. To fix the geometric situation of the problem, we consider \md{two non-empty, open, connected and bounded subsets  $\Omega$ and $\Omega_{2}$ of $\mathbb{R}^{2}$,  with boundary of class $C^4$  such that $\overline{\Omega_{2}}\subset\Omega$}. We denote $\Omega_{1}\coloneqq \Omega\setminus\overline{\Omega_{2}}$, $\Gamma\coloneqq \partial\Omega$ and $I\coloneqq \partial\Omega_{2}$. Note that $\partial\Omega_{1}=\Gamma\cup I$. The plate-membrane system of interest is composed by a thermoelastic plate in $\Omega_1$, and a membrane, occupying in equilibrium  the region $\Omega_{2}$, as shown in Figure \ref{region2}.

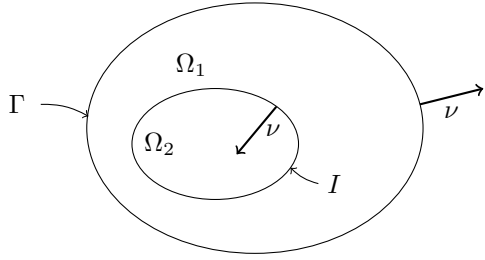
\begin{figure}[ht]
\begin{center}
\begin {tikzpicture}[x=3em,y=3em]
\draw (0,0) ellipse (1.05  and 0.7);
\draw (0.5, 0.2 ) ellipse (2.12 and 1.58);
\draw [->,bend left=15]  (-2.2, 0.5) to (-1.6,0.35);
\draw [->,bend left=15] (1.3, -0.5) to (0.95,-0.3);
\draw [->, thick] (0.77,0.47) to node{\quad\;$\nu$} +(-0.5,-0.6);
\draw [->, thick] (2.58,0.5) to node[below]{$\nu$} +(0.8,0.2);
\node at (-0.7,0) {$\Omega_2$};
\node at (-0.3, 1) {$\Omega_1$};
\node at (-2.5,0.5) {$\Gamma$};
\node at (1.5,-0.5) {$I$};
\end{tikzpicture}
\end{center}
\caption{Plate-Membrane system \label{region2}}
\end{figure}


Denoting by $u(x,t)$, $v(x,t)$ the vertical displacements of the points on the middle surface of the plate and on the membrane \md{with coordinates $x$ at time $t$}, respectively, and by $\theta(x,t)$ the temperature difference on the plate, the mathematical model for the structure is given by the equations
\begin{align}
\rho_{1} u_{tt}-\gamma\Delta u_{tt}+\beta_{1} \Delta^{2} u-\rho\Delta u_t+\mu\Delta\theta & =
0 \quad\text{in} \quad\Omega_{1} \times(0, \infty),\label{pl-1}\\
\rho_{0}\theta_{t}-\beta_{0}\Delta\theta-\mu\Delta u_{t}  & =0 \quad\text{in}
\quad\Omega_{1} \times(0, \infty),\label{pl-2}\\
\rho_{2} v_{tt} -\beta_{2} \Delta v  +mv_{t} & =0 \quad\text{in}
\quad\Omega_{2}\times(0, \infty),\label{m}
\end{align}
where $\rho_i$, $\beta_i$ ($i=0, 1, 2$) and $\mu$ are positive constants depending on the properties of the materials, and $\gamma$, $\rho$ and $m$ are non-negative constants. The coefficient  $\gamma>0$ represents the rotational inertia of the filaments of the plate and is proportional to the square of the plate thickness. Consequently, it is usual to consider this thickness very small (the case $\gamma=0$ corresponds to a thin plate). The coefficient $m\geq0$  describes the damping (or the absence of damping) for the wave equation \eqref{m}, whereas $\rho$ in \eqref{pl-1} describes a structural damping on the plate. We will also include the situation  when thermal effects for the plate are not taken into account by setting $\mu=0$ in \eqref{pl-1} and omitting \eqref{pl-2}.

We will assume that the plate is clamped at the exterior boundary $\Gamma$, namely,
\begin{equation}
\label{fron}u=\frac{\partial u}{\partial\nu}=0 \text{ on } \Gamma
\times(0, \infty),
\end{equation}
where $\nu$ represents the outward pointing unit normal vector to the boundary of $\Omega_{1}$ and consequently, $-\nu$  is the corresponding outward unit normal vector  to the boundary of $\Omega_{2}$.

Due to the lack of thermal effects in the membrane, we will assume that the difference of temperature in the interface is zero. We will also assume that the plate satisfies Newton's cooling law. This leads to the  boundary conditions
\begin{equation}
\label{temp}\theta=0 \text{ on } I \times(0, \infty) \quad\textup{ and }\quad \frac{\partial\theta}{\partial\nu}+ \kappa\theta=0 \mdb{\text{ on}}\  \Gamma
\times(0, \infty),
\end{equation}
for some constant $\kappa>0$.

In addition, we consider the following transmission conditions on the interface
\begin{align}
u=v,\quad\frac{\partial u}{\partial\nu}=0 &\quad\text{on}\ I \times
(0, \infty),\label{int-1}\\
\beta_{1} \frac{\partial\Delta u}{\partial\nu}+\beta_{2}\frac{\partial v}{\partial\nu}+\mu\frac{\partial\theta}{\partial\nu}=0 &\quad\text{on}\ I
\times(0, \infty),\label{int-2}
\end{align}
and the initial conditions
\begin{equation}\label{inc}
\begin{array}[c]{ccc}
u(\cdot, 0)=u_{0}, & v(\cdot, 0)=v_{0}, & \theta(\cdot, 0)=\theta_{0},\\
u_{t}(\cdot, 0)=u_{1}, & v_{t}(\cdot, 0)=v_{1}. &
\end{array}
\end{equation}
In \eqref{int-1}, the condition $u=v$ on $I$ is necessary for the continuity of the solution, and the condition $\frac{\partial u}{\partial\nu}=0$ on $I$ has the following interpretation: the transversal force caused by the tension and the one originated by the shear \md{stress } between the plate and the membrane cancel each other,  which forces   the horizontal displacements on the interface to be zero (compare with \cite{hassine2016asymptotic}).
The aim of the present paper is to study well-posedness, regularity,  and asymptotic behaviour   of the solution of \eqref{pl-1}-\eqref{inc}, in dependence on the parameters $m,\rho,\gamma$, and $\mu$.

For the case $\gamma=0$, we refer to \cite{ammari2010stabilization}, \cite{martinez2019regularity}, \cite{hassine2016asymptotic}, \cite{hernandez2005system} and \cite{liu2006existence}, where structures formed by a plate and a membrane were studied. In \cite{hernandez2005system}, the  author models a system composed by an elastic thin plate coupled with an elastic membrane and shows existence and uniqueness of weak solutions. In \cite{liu2006existence}, the authors study undamped plate-membrane systems, where the plate and the membrane are two layers occupying the same region in the plane. In \cite{ammari2010stabilization}, a coupled system of a wave equation and a plate equation with damping on the boundary without thermal effects is studied. In \cite{hassine2016asymptotic}, the \md{author } studies a transmission problem with the configuration presented in Figure \ref{region2}, but with the  plate being surrounded by the membrane. In \cite{martinez2019regularity} the plate is isothermal. Most of these references study some kind of stability for the solution but only a few of them deal with regularity.

Regarding the rotational inertia term ($\gamma>0$), we mention, e.g., \cite{avalos1998exponential}, \cite{chueshov2008attractors}, \cite{dell2013stability}, \mdb{\cite{FERNANDEZSARE20197085}}, \mdb{\cite{lasiecka2017global}}, \cite{lasiecka1993sharp}, \cite{rivera2004transmission} and \mdb{\cite{rivera2017large}}. From these, only in \cite{rivera2004transmission} a transmission problem is analyzed and it is of the thermoelastic plate-plate type. There seem to be few results for the structure \eqref{pl-1}-\eqref{inc}, even for the case $\gamma=0$.

The principal results of this work state the existence and uniqueness of the solution of the problem \eqref{pl-1}-\eqref{inc}, and its continuous dependence on the data (i.e., the well-posedness). It is also proved that the solution for the case $\gamma\geq0$ (i.e., with or without rotational inertia term) and $m\geq0$ (i.e., with or without damping over the membrane) has higher regularity. In particular, the boundary and transmission conditions
hold in the strong sense of  traces if the initial values are smooth enough.  Furthermore, we study the asymptotic behaviour of the solution in terms of the stability of the associated semigroup in different situations. For a damped membrane ($m>0$), we show that exponential stability holds if $\rho>0$ (Theorem~\ref{exp stab}) or if $\rho=\gamma=0$ (Theorem~\ref{Main_Th_section8}). For the undamped membrane ($m=0$), we have no exponential stability (Theorem~\ref{lack exp stab}), but polynomial stability (Theorem~\ref{teorcondition}) under some geometric condition.

The paper is organized as follows: In Section \ref{Well-posedness}, we define the basic spaces and operators and prove the generation of a $C_{0}$-semigroup of contractions, which implies the well-posedness of the problem \eqref{pl-1}-\eqref{inc}. In Section \ref{seccion_de_regularidad} we show  some spectral properties of the operator defined by the weak formulation of the transmission problem, and we also show  the regularity of the solution using the theory of parameter-elliptic boundary value problems and some ideas from \cite{martinez2019regularity}, where the authors study a plate-membrane transmission problem with $\gamma=0$ and without thermal effect over the plate.  In Section \ref{Exponential_stability} we prove exponential stability for the damped membrane, whereas in Section \ref{Lack_exponential_stability} we study the undamped membrane.

In the following, the letter $C$ stands for a generic constant which may vary in each time of appearance. We will also use the notation $\chi_A$  for the characteristic function of the set $A$, i.e. $\chi_A(x)=1$ for $x\in A$ and $\chi_A(x)=0$ else. If $X$ and $Y$ are Banach spaces, we write $X\subset Y$ for the continuous embedding, i.e. if $X$ is a subset of $Y$ and if $\operatorname{id}\colon X\to Y$ is continuous. The space of all bounded linear operators in $\mathcal H$ will be denoted by $\mathcal L(\mathcal H)$.


\section{Well-posedness}\label{Well-posedness}

Following the standard approach, we will formulate \eqref{pl-1}-\eqref{inc} as an abstract Cauchy problem and study the associated semigroup. We define $w = (w_j)_{j=1,\dots,5} \coloneqq (u,u_t,\theta,v,v_t)^\top$ and write \eqref{pl-1}--\eqref{m}, with the initial conditions \eqref{inc},  formally as a first order system
\begin{equation}
  \label{1-1}
   M(D) \partial_t w (t) - A(D) w(t) = 0\;(t>0),\quad w(0) = w_0
\end{equation}
with the diagonal matrix
\[ M(D) \coloneqq  \begin{pmatrix}
  1 & & & & \\ &\rho_1-\gamma\Delta & & & \\ & & \rho_0& & \\ & & &1 & \\ & & & &\rho_2
\end{pmatrix}\]
and, with
\[A(D) \coloneqq  \begin{pmatrix}
  0 & 1& 0&0 &0 \\ -\beta_1\Delta^2 &\rho \Delta &-\mu\Delta &0 & 0\\
  0&\mu\Delta & \beta_0\Delta &0 & 0\\0 &0 &0 &0 & 1\\ 0& 0& 0&\beta_2\Delta &-m
\end{pmatrix}.\]
and $w_0=(u_0,u_1,\theta_0,v_0,v_1)^\top$.\\
We start with the definition of the related operators in a weak Hilbert space setting. Here we have to distinguish the case $\gamma>0$ (presence of rotational inertia term) from the case $\gamma=0$. For $\gamma>0$, the Hilbert space $\mathcal H_\gamma$  is defined as the space of all complex-valued functions
\[ w = (w_1,\dots,w_5)^\top \in H^2(\Omega_1) \times H^1(\Omega_1) \times L^2(\Omega_1) \times H^1(\Omega_2)\times L^2(\Omega_2)\]
satisfying the boundary and transmission conditions
\begin{align*}
  w_1 = \partial_\nu w_1 = w_2 & = 0 \quad  \text{ on }\Gamma,\\
  w_1 = w_4,\, \partial_\nu w_1 & = 0 \quad \text{ on } I.
\end{align*}
For $\gamma=0$, we modify this definition by replacing the condition $w_2\in H^1(\Omega_1)$ by $w_2\in L^2(\Omega_1)$ and omitting the boundary condition $w_2=0$ on $\Gamma$. We endow the space $\mathcal H_\gamma$ for all $\gamma\ge 0$ with a scalar product which is adapted to the transmission problem. For $w,\phi\in \mathcal H_\gamma$, we set
\begin{align*}
  \langle w,\phi\rangle_{\mathcal H_\gamma} & \coloneqq  \beta_1 \langle\Delta w_1, \Delta\phi_1 \rangle_{L^2(\Omega_1)}+\rho_1\langle w_2, \phi_2\rangle_{L^2(\Omega_1)}+\gamma\langle\nabla w_2, \nabla\phi_2\rangle_{L^2(\Omega_1)}\\
  &   + \rho_0\langle w_3, \phi_3\rangle_{L^2(\Omega_1)}
  + \beta_2\langle\nabla w_4, \nabla\phi_4\rangle_{L^2(\Omega_2)} + \rho_2\left\langle w_5, \phi_5\right\rangle_{L^2(\Omega_2)}.
\end{align*}

\begin{lem}
  \label{2.1}
  For $\gamma>0$, the norm in $\mathcal H_\gamma$ is equivalent to the standard norm in $H^2(\Omega_1) \times H^1(\Omega_1) \times L^2(\Omega_1) \times H^1(\Omega_2)\times L^2(\Omega_2)$. For $\gamma=0$, the norm in $\mathcal H_0$ is equivalent to the standard norm in $H^2(\Omega_1) \times L^2(\Omega_1) \times L^2(\Omega_1) \times H^1(\Omega_2)\times L^2(\Omega_2) $.
\end{lem}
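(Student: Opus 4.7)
The inequality $\|\cdot\|_{\mathcal H_\gamma}\le C\,\|\cdot\|_{\mathrm{std}}$ is immediate, since $\|\Delta w_1\|_{L^2(\Omega_1)}\le\sqrt{2}\,\|w_1\|_{H^2(\Omega_1)}$ and every other term in the $\mathcal H_\gamma$-inner product is plainly dominated by a corresponding piece of the standard norm. The substantive direction is the reverse bound, which I would reduce to three ingredients: (a) an elliptic-type estimate $\|w_1\|_{H^2(\Omega_1)}\lesssim \|\Delta w_1\|_{L^2(\Omega_1)}$ exploiting the boundary and transmission conditions on $w_1$; (b) a Poincaré-type bound on $\|w_2\|_{L^2(\Omega_1)}$, needed only when $\gamma>0$ (for $\gamma=0$ this $L^2$-norm appears literally in both norms); and (c) a recovery of $\|w_4\|_{L^2(\Omega_2)}$ using the trace of $w_1$ across the interface. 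The remaining summands $\|w_3\|_{L^2(\Omega_1)}$, $\|w_5\|_{L^2(\Omega_2)}$, and (when $\gamma>0$) $\|\nabla w_2\|_{L^2(\Omega_1)}$ and $\|\nabla w_4\|_{L^2(\Omega_2)}$ are present verbatim in the $\mathcal H_\gamma$-norm.

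For (a) I would first show $\|w_1\|_{L^2(\Omega_1)}\lesssim \|\nabla w_1\|_{L^2(\Omega_1)}\lesssim \|\Delta w_1\|_{L^2(\Omega_1)}$. The first inequality is the Friedrichs inequality, since $w_1=0$ on the nonempty portion $\Gamma$ of $\partial\Omega_1$. The second follows from
\[
\|\nabla w_1\|_{L^2(\Omega_1)}^2 \;=\; -\int_{\Omega_1} w_1\,\overline{\Delta w_1}\,dx \;+\; \int_{\partial\Omega_1} w_1\,\overline{\partial_\nu w_1}\,d\sigma,
\]
where the boundary integral vanishes: on $\Gamma$ because $w_1=0$, on $I$ because $\partial_\nu w_1=0$. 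Cauchy--Schwarz together with the Friedrichs bound then absorbs the $\|w_1\|_{L^2}$ factor. To upgrade from an $L^2$-bound on $\Delta w_1$ to an $H^2$-bound on $w_1$, I regard $w_1$ as a solution of the mixed boundary value problem $-\Delta u = f$ in $\Omega_1$, $u=0$ on $\Gamma$, $\partial_\nu u = 0$ on $I$, with $f\coloneqq -\Delta w_1 \in L^2(\Omega_1)$. Since $\overline{\Omega_2}\subset\Omega$, the Dirichlet portion $\Gamma$ and Neumann portion $I$ are \emph{disjoint} closed $C^4$-components of $\partial\Omega_1$, so no mixed-boundary singularity appears and the standard $H^2$-regularity theory yields $\|w_1\|_{H^2(\Omega_1)}\le C\bigl(\|f\|_{L^2}+\|w_1\|_{L^2}\bigr)\lesssim \|\Delta w_1\|_{L^2(\Omega_1)}$.

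For (b), when $\gamma>0$ the condition $w_2=0$ on $\Gamma$ is built into $\mathcal H_\gamma$, hence Friedrichs gives $\|w_2\|_{L^2(\Omega_1)}\lesssim \|\nabla w_2\|_{L^2(\Omega_1)}$, which is controlled through the $\gamma$-term of the $\mathcal H_\gamma$-norm. For (c) I would invoke the classical Poincaré-type inequality with boundary term, $\|u\|_{L^2(\Omega_2)}\lesssim \|\nabla u\|_{L^2(\Omega_2)}+\|u\|_{L^2(I)}$ for $u\in H^1(\Omega_2)$, apply it to $w_4$, use the transmission identity $w_4=w_1$ on $I$ together with the trace estimate $\|w_1\|_{L^2(I)}\lesssim \|w_1\|_{H^1(\Omega_1)}$, and then close the loop via (a) to bound the right-hand side by $\|\Delta w_1\|_{L^2(\Omega_1)}$. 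The main (and mild) obstacle is the mixed-boundary elliptic regularity in (a); everything else is a chain of standard Poincaré and trace inequalities. Since neither ingredient depends on $\gamma$, the two cases of the lemma follow uniformly.
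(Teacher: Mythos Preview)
Your argument is correct, and it follows essentially the same skeleton as the paper's proof: an $H^2$-elliptic estimate for $w_1$ from $\|\Delta w_1\|_{L^2}$, the integration-by-parts identity $\|\nabla w_1\|_{L^2}^2=-\langle w_1,\Delta w_1\rangle$ with vanishing boundary term, and a Poincar\'e-type recovery of $\|w_4\|_{L^2(\Omega_2)}$. There are, however, two methodological differences worth noting. First, for the $H^2$-regularity step you invoke the \emph{mixed} problem ($u=0$ on $\Gamma$, $\partial_\nu u=0$ on $I$), whereas the paper uses the pure Neumann problem $\partial_\nu u=0$ on all of $\partial\Omega_1$ (which is legitimate since $\partial_\nu w_1=0$ holds on $\Gamma$ as well). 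Both are covered by the elliptic theory cited, and since $\Gamma$ and $I$ are disjoint closed $C^4$-components there is no mixed-boundary singularity, so your version works; the paper's choice simply avoids appealing to the mixed case. Second, to control $\|w_4\|_{L^2(\Omega_2)}$ the paper glues $w_1$ and $w_4$ into a single function $\chi_{\Omega_1}w_1+\chi_{\Omega_2}w_4\in H^1_0(\Omega)$ and applies Poincar\'e on $\Omega$, thereby bounding $\|w_1\|_{L^2}$ and $\|w_4\|_{L^2}$ simultaneously by the gradients; your route via the Poincar\'e-with-boundary-term inequality on $\Omega_2$ plus the trace of $w_1$ across $I$ is equally valid and perhaps more transparent about where the transmission condition $w_1=w_4$ enters.

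One minor redundancy: your step (b) is unnecessary. The $\mathcal H_\gamma$-norm already contains the term $\rho_1\|w_2\|_{L^2(\Omega_1)}^2$ with $\rho_1>0$, so for $\gamma>0$ the two norms $\rho_1\|w_2\|_{L^2}^2+\gamma\|\nabla w_2\|_{L^2}^2$ and $\|w_2\|_{H^1}^2$ are trivially equivalent without invoking Friedrichs. The paper accordingly says nothing about $w_2$.
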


\begin{proof}
  Let $\gamma>0$. Obviously, all terms in the norm $\|\cdot\|_{\mathcal H_\gamma}$ can be estimated by the standard norm in $\mathcal H\coloneqq  H^2(\Omega_1) \times H^1(\Omega_1) \times L^2(\Omega_1) \times H^1(\Omega_2)\times L^2(\Omega_2)$, so we only have to show $\|\cdot\|_{\mathcal H_\gamma} \ge C \|\cdot\|_{\mathcal H}$.

  Let $w\in \mathcal H_ \gamma$. Then $w_1$ is a solution of the boundary value problem
  \begin{align*}
    \Delta u & = f \quad \text{ in } \Omega_1,\\
    \partial_\nu u & = 0 \quad \text{ on }\partial\Omega_1
  \end{align*}
  with $f\coloneqq \Delta u \in L^2(\Omega_1)$. By elliptic regularity (see \cite{Agranovich15}, Theorem~7.1.3), we get
  \begin{equation}
    \label{2-2}
    \| w_1\|_{H^2(\Omega_1)} \le C \big( \| \Delta w_1\|_{L^2(\Omega_1)} + \|w_1\|_{L^2(\Omega_1)}\big).
  \end{equation}
  Due to the boundary and transmission conditions $w_1=0$ on $\Gamma$ and $w_1=w_4$ on~$I$, the function $\chi_{\Omega_1}w_1 +  \chi_{\Omega_2} w_4$ belongs to $H^1_0(\Omega)$. With Poincar\'{e}'s inequality, we obtain
  \begin{equation}
    \label{2-1}
    \|w_1\|_{L^2(\Omega_1)}^2 + \|w_4\|_{L^2(\Omega_2)}^2 \le C \big( \|\nabla w_1\| _{L^2(\Omega_1)}^2 + \|\nabla w_4\|_{L^2(\Omega_2)}^2\big).
  \end{equation}
  As $w_1=0$ on $\Gamma$ and $\partial_\nu w_1=0$ on $\partial\Omega_1$, we can apply Poincar\'{e}'s inequality, integration by parts, and Young's inequality to see that for every $\epsilon>0$ there exists a $C_\epsilon >0$ such that
  \begin{align*}
    \|w_1\|_{L^2(\Omega_1)}^2 & \le C \|\nabla w_1\|_{L^2(\Omega_1)}^2 = -C \langle\Delta w_1, w_1\rangle_{L^2(\Omega_1)}\\
     & \le \epsilon C \|w_1\|_{L^2(\Omega_1)}^2 + C C_\epsilon \| \Delta w_1\|_{L^2(\Omega_1)}^2.
  \end{align*}
  Choosing $\epsilon$ with $\epsilon C \le \frac 12$, we can estimate $ \|w_1\|_{L^2(\Omega_1)} \le C\| \Delta w_1\|_{L^2(\Omega_1)}$. Combining this with \eqref{2-1} and \eqref{2-2}, we obtain
  \begin{equation*}
    \label{2-3}
    \| w_1\|_{H^2(\Omega_1)}^2 + \|w_4\|_{H^1(\Omega_2)}^2 \le C \big( \| \Delta w_1\|_{L^2(\Omega_1)}^2 + \|\nabla w_4\|_{L^2(\Omega_2)}^2 \big).
  \end{equation*}
  By definition of the norms in $\mathcal H_\gamma$ and the standard norm in $\mathcal H$, this yields $\|w\|_{\mathcal H}\le C \|w\|_{\mathcal H_\gamma}$.

  The same arguments show that also for $\gamma=0$ the norm in the space $\mathcal H_0 $ is equivalent to the standard norm in $ H^2(\Omega_1) \times L^2(\Omega_1) \times L^2(\Omega_1) \times H^1(\Omega_2)\times L^2(\Omega_2)$.
  \end{proof}

To formulate the transmission problem  \eqref{1-1} in a weak setting, we formally apply the operator $\beta_1\Delta^2$ to the first component and $-\beta_2\Delta$ to the fourth component. We obtain
\begin{equation*}
  \label{2-4}
   \widetilde M(D) \partial_t w (t) - \widetilde A(D) w(t) = 0\;(t>0),\quad w(0) = w_0
\end{equation*}
with
\[ \widetilde M(D) \coloneqq  \begin{pmatrix}
 \beta_1\Delta^2 & & & & \\ &\rho_1-\gamma\Delta & & & \\ & & \rho_0& & \\ & & &-\beta_2\Delta & \\ & & & &\rho_2
\end{pmatrix}\]
and
\[\widetilde A(D) \coloneqq  \begin{pmatrix}
  0 & \beta_1\Delta^2 & 0&0 &0 \\ -\beta_1\Delta^2 &\rho \Delta &-\mu\Delta &0 & 0\\
  0&\mu\Delta & \beta_0\Delta &0 & 0\\0 &0 &0 &0 & -\beta_2\Delta\\ 0& 0& 0&\beta_2\Delta &-m
\end{pmatrix}.\]
In this way, the weak formulation is adapted to the definition of the Hilbert space~$\mathcal H_\gamma$ and to the boundary and transmission conditions. Let $\mathcal H_\gamma'$ denote the antidual space of $\mathcal H_\gamma$, i.e. the space of all continuous conjugate linear functionals on $\mathcal H_\gamma$.  We define the operator $\mathbb M\colon \h \to \mathcal H_\gamma'$ by
\[ \langle \mathbb M w, \phi\rangle_{\mathcal H_\gamma'\times \h} \coloneqq  \langle w,\phi\rangle_{\h}\quad (w,\phi\in \h).\]
To define the operator related to $\widetilde A(D)$ in a weak setting, we introduce the space
\[ H^{2,1} \coloneqq  \{ (u,v)^\top \in H^2(\Omega_1)\times H^1(\Omega_2): u=\partial_\nu u =0 \text{ on }\Gamma, \, u=v,\, \partial_\nu u=0\text{ on } I\}\]
with inner product
\[ \langle (u,v), (u',v')\rangle_{H^{2,1}} \coloneqq  \beta_1\langle \Delta u,\Delta u'\rangle_{L^2(\Omega_1)} + \beta_2 \langle \nabla v,\nabla v'\rangle_{L^2(\Omega_2)}.\]
We have seen in the proof of Lemma~\ref{2.1} that this norm is equivalent to the standard norm in $H^2(\Omega_1)\times H^1(\Omega_2)$. Note that in the definition of $\h$, we have $(w_1,w_4)^\top\in H^{2,1}$. We define the subspace $\mathcal V\subset \h$ by
\[ \mathcal V \coloneqq  \{ w\in \mathcal H_\gamma: (w_2,w_5)^\top \in H^{2,1}, \, w_3\in H^1(\Omega_1),\, w_3=0\text{ on } I\}\]
with inner product
\begin{align*}
 \langle w,\phi\rangle_{\mathcal V} & \coloneqq  \big\langle (w_1,w_4), (\phi_1,\phi_4)\big\rangle_{H^{2,1}} +
\big\langle (w_2,w_5), (\phi_2,\phi_5)\big\rangle_{H^{2,1}} \\
& \quad + \beta_0\langle\nabla w_3,\nabla\phi_3\rangle_{L^2(\Omega_1)} + \beta_0 \kappa \langle w_3,\phi_3\rangle_{L^2(\Gamma)}.
\end{align*}
Now we can define $\mathbb A\colon \mathcal V\to \mathcal V'$ by
\begin{equation}\label{2-6}
\begin{aligned}
  \langle \mathbb Aw,\phi\rangle_{\mathcal V'\times \mathcal V} & \coloneqq  -\big\langle (w_1,w_4), (\phi_2,\phi_5)\big\rangle_{H^{2,1}} +
\big\langle (w_2,w_5), (\phi_1,\phi_4)\big\rangle_{H^{2,1}} \\
& - \mu \langle w_3,\Delta\phi_2\rangle_{L^2(\Omega_1)} + \mu \langle \Delta w_2, \phi_3\rangle_{L^2(\Omega_1)}  - \rho\langle\nabla w_2,\nabla\phi_2\rangle_{L^2(\Omega_1)} \\
& - \beta_0\langle\nabla w_3,\nabla\phi_3\rangle_{L^2(\Omega_1)} - \beta_0 \kappa \langle w_3,\phi_3\rangle_{L^2(\Gamma)} - m\langle w_5,\phi_5\rangle_{L^2(\Omega_2)}
\end{aligned}
\end{equation}
for $w,\phi\in\mathcal V$.

\begin{obser}
  \label{2.2}
  a) The norm in $\mathcal V$ is equivalent to the standard norm in $H^2(\Omega_1)\times H^2(\Omega_1)\times H^1(\Omega_1)\times H^1(\Omega_2)\times H^1(\Omega_2)$. In fact, we have already seen that the norm in $H^{2,1}$ is equivalent to the norm in $H^2(\Omega_1)\times H^1(\Omega_2)$, and for the component $w_3$ we have $\|w_3\|_{H^1(\Omega_1)}\le C\|\nabla w_3\|_{L^2(\Omega_1)}$ by Poincar\'{e}'s inequality and $\|w_3\|_{L^2(\Gamma)}\le \|w_3\|_{H^{1/2}(\Gamma)} \le C \|w_3\|_{H^1(\Omega_1)}$ by trace results.

  b) From the definition we immediately see that $\mathbb M \in L (\h,\mathcal H_\gamma')$ and $\mathbb A\in L(\mathcal V,\mathcal V')$. Moreover, $\mathbb M$ is defined as the scalar product in the Hilbert space $\h$ and therefore is an isometric isomorphism from $\h$ to $\mathcal H_\gamma'$.
\end{obser}

Based on Remark~\ref{2.2} b), we can define the $\h$-realization of the transmission problem as the operator $\mathcal A\colon \h \supset D(\mathcal A)\to \h$ by
\begin{equation*}\label{2-5}
D(\mathcal A) \coloneqq  \{ w\in\mathcal V: \mathbb A w \in\mathcal H_\gamma'\}, \quad \mathcal A w \coloneqq  \mathbb M^{-1}\mathbb A w.
\end{equation*}
 We consider the abstract Cauchy problem
\begin{equation}
   \label{pvi-f}
   \begin{aligned}
     \partial_t w(t) - \mathcal A w(t) & = 0 \quad (t>0),\\
     w(0) & = w_0
   \end{aligned}
\end{equation}
with $w_0 \coloneqq  (u_{0},u_{1},\theta_{0},v_{0},v_{1})^\top$. The following remark shows that this Cauchy problem is in fact the weak formulation of the transmission problem \eqref{pl-1}-\eqref{inc}.

\begin{obser}\label{2.3}
  a)  We have $\mathbb A w = \widetilde A(D) w $ for all  $w\in D(\mathcal A)$ and $\mathbb M   w = \widetilde M(D)  w$ for all $ w\in \h$  in the sense of distributions. This follows immediately from the definitions of the operators and integration by parts, when we choose $\phi\in \mathscr D(\Omega_1)^3 \times \mathscr D(\Omega_2) ^2$, where $\mathscr D(\Omega_1)$ stands for the infinitely smooth functions with compact support in~$\Omega_1$.
  Consequently, a function $w\in C^1([0,\infty); D(\mathcal A))$ is a classical solution of \eqref{pvi-f} if and only if $w$ satisfies \eqref{pl-1}--\eqref{m} in the distributional sense.

  b) Let $w\in D(\mathcal A)$ be of the higher regularity $w\in H^4(\Omega_1)\times H^2(\Omega_1)^2 \times H^2(\Omega_2)\times H^1(\Omega_2)$. Then $w$ satisfies the boundary and transmission conditions \eqref{fron}--\eqref{int-2} in the strong sense, i.e. as equality of the traces of the functions on $\Gamma$ and $I$, respectively.

  To see this, we only have to show that the second equality in \eqref{temp} and equality   \eqref{int-2} hold, as the other conditions are already included in the definition of $\mathcal V$.
    Setting $\phi=(0,0,\phi_3,0,0)^\top$, we obtain by a)
    \[ \langle \mathbb Aw,\phi\rangle_{\mathcal V'\times \mathcal V} = \langle \mu\Delta w_2 +\beta_0\Delta w_3, \phi_3\rangle_{L^2(\Omega_1)}.\]
  Comparing this with the definition of $\mathbb A$, we obtain, using integration by parts, that
    \[ \int_\Gamma (\kappa w_3 + \partial_\nu w_3) \overline{\phi_3} dS =0\]
   holds for all $\phi_3\in H^1(\Omega_1)$ with $\phi_3=0$ on $I$. Therefore, $\kappa w_3+\partial_\nu w_3=0$ holds on~$\Gamma$ in the strong sense, i.e. as equality in the trace space $H^{1/2}(\Gamma)$. In the same way, one can prove that \eqref{int-2} holds in the strong sense.
\end{obser}

To show well-posedness, we will also need the following result.

\begin{lem}\label{lema de densidad}
The space $\mathcal V$ is dense in $\h$, and therefore we have the dense embeddings
\[ \mathcal V\subset \h \subset (L^2(\Omega_1))^3\times (L^2(\Omega_2))^2 \subset\mathcal H_\gamma'\subset \mathcal V'.\]
\end{lem}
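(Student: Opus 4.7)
The plan is to reduce the whole chain of dense embeddings to the single claim that $\mathcal V$ is dense in $\mathcal H_\gamma$. Once that is in hand, $\mathcal H_\gamma$ is dense in $(L^2(\Omega_1))^3 \times (L^2(\Omega_2))^2$ because $\mathcal V$ itself contains $\mathscr D(\Omega_1)^3 \times \mathscr D(\Omega_2)^2$ (smooth, compactly supported functions trivially satisfy all the boundary and interface conditions defining $\mathcal V$), and this subspace is dense in $(L^2(\Omega_1))^3 \times (L^2(\Omega_2))^2$. The two remaining inclusions $(L^2(\Omega_1))^3 \times (L^2(\Omega_2))^2 \subset \mathcal H_\gamma' \subset \mathcal V'$ are then automatically dense by the standard Gelfand-triple principle applied to a dense continuous inclusion.

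For the density of $\mathcal V$ in $\mathcal H_\gamma$ I would verify that its $\mathcal H_\gamma$-orthogonal complement is trivial. Suppose $w \in \mathcal H_\gamma$ satisfies $\langle w,\phi\rangle_{\mathcal H_\gamma}=0$ for every $\phi \in \mathcal V$, and kill the components successively. Choosing $\phi=(0,0,\phi_3,0,0)$ with $\phi_3 \in \mathscr D(\Omega_1)$ and $\phi=(0,0,0,0,\phi_5)$ with $\phi_5 \in \mathscr D(\Omega_2)$—each lying in $\mathcal V$ since the remaining components are zero and the relevant interface/boundary conditions become trivial—immediately yields $w_3=0$ and $w_5=0$.

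For $w_2$ I would test against $\phi=(0,\phi_2,0,0,\phi_5)$, where $\phi_2$ ranges over
\[
X_0 := \{u\in H^2(\Omega_1): u=\partial_\nu u=0 \text{ on }\Gamma,\ \partial_\nu u=0 \text{ on }I\}
\]
and $\phi_5\in H^1(\Omega_2)$ is any extension of $\phi_2|_I$, so that $(\phi_2,\phi_5)\in H^{2,1}$ and hence $\phi\in\mathcal V$. The orthogonality reduces to $\rho_1\langle w_2,\phi_2\rangle+\gamma\langle\nabla w_2,\nabla\phi_2\rangle=0$ for every $\phi_2\in X_0$. For $\gamma=0$, restricting to $\phi_2\in\mathscr D(\Omega_1)$ and using density in $L^2(\Omega_1)$ forces $w_2=0$ at once. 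For $\gamma>0$, the same restriction yields $(\rho_1-\gamma\Delta)w_2=0$ in $\mathscr D'(\Omega_1)$; combined with $w_2|_\Gamma=0$ elliptic regularity lifts $w_2$ to $H^2(\Omega_1)$. An integration by parts against a general $\phi_2\in X_0$ then reduces the identity to $\gamma\int_I (\partial_\nu w_2)\overline{\phi_2}\,dS=0$, and the surjectivity of the trace $X_0\to H^{3/2}(I)$ gives $\partial_\nu w_2=0$ on $I$. Pairing the PDE with $\overline{w_2}$ and integrating by parts—the boundary integrals on $\Gamma$ and $I$ vanishing by the respective Dirichlet and Neumann conditions—produces $\rho_1\|w_2\|^2+\gamma\|\nabla w_2\|^2=0$, hence $w_2=0$.

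Finally, with $w_2=w_3=w_5=0$ in place, testing against $\phi=(\phi_1,0,0,\phi_4,0)$ for arbitrary $(\phi_1,\phi_4)\in H^{2,1}$ collapses the orthogonality to $\langle(w_1,w_4),(\phi_1,\phi_4)\rangle_{H^{2,1}}=0$. Since $(w_1,w_4)\in H^{2,1}$ by the very definition of $\mathcal H_\gamma$, taking the admissible choice $(\phi_1,\phi_4)=(w_1,w_4)$ gives $\|(w_1,w_4)\|_{H^{2,1}}=0$, and the norm equivalence of Lemma~\ref{2.1} forces $w_1=w_4=0$. The main obstacle is the $\gamma>0$ step for $w_2$: lifting the distributional solution to $H^2(\Omega_1)$ and then extracting the Neumann condition on $I$ from the weak identity via a careful integration by parts; everything else amounts to bookkeeping with the right test functions.
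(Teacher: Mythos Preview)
Your orthogonal-complement argument is sound in outline and genuinely different from the paper's proof, which is entirely constructive: the paper first shows that $V(\Omega_1):=\{\phi\in H^2(\Omega_1):\phi=\partial_\nu\phi=0\text{ on }\Gamma,\ \partial_\nu\phi=0\text{ on }I\}$ is dense in $H^1_\Gamma(\Omega_1)$ by a cutoff decomposition together with the density of $D(\Delta_N)$ in $H^1(\Omega_1)$, and then builds an explicit $H^{2,1}$-approximant to any $(u,v)\in H^1_\Gamma(\Omega_1)\times L^2(\Omega_2)$. Your duality route avoids these constructions at the price of a small piece of trace/regularity theory for the $w_2$ component; the paper's route avoids any PDE argument but needs the specific density of the Neumann domain.

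There is, however, one step in your $\gamma>0$ argument that does not hold as written. From $(\rho_1-\gamma\Delta)w_2=0$ in $\mathscr D'(\Omega_1)$ and $w_2|_\Gamma=0$ alone you cannot conclude $w_2\in H^2(\Omega_1)$: a boundary condition on only part of $\partial\Omega_1$ is not enough for global $H^2$ regularity (take any $g\in H^{1/2}(I)\setminus H^{3/2}(I)$ and solve $(\rho_1-\gamma\Delta)u=0$ with $u|_\Gamma=0$, $u|_I=g$). The fix is to reverse the order of your last two steps. Since $w_2\in H^1(\Omega_1)$ and $\Delta w_2=(\rho_1/\gamma)w_2\in L^2(\Omega_1)$, the normal trace $\partial_\nu w_2\in H^{-1/2}(\partial\Omega_1)$ is well defined and Green's formula
\[
\langle\nabla w_2,\nabla\phi_2\rangle_{L^2(\Omega_1)}=-\langle\Delta w_2,\phi_2\rangle_{L^2(\Omega_1)}+\langle\partial_\nu w_2,\phi_2\rangle_{H^{-1/2}\times H^{1/2}}
\]
is valid for every $\phi_2\in H^1(\Omega_1)$. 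Plugging this into your weak identity for $\phi_2\in X_0$ (where $\phi_2|_\Gamma=0$) yields $\langle\partial_\nu w_2|_I,\phi_2|_I\rangle=0$; since the traces $\phi_2|_I$ are dense in $H^{1/2}(I)$, you get $\partial_\nu w_2|_I=0$ in $H^{-1/2}(I)$ without ever invoking $H^2$ regularity. Your final pairing with $w_2$ then goes through verbatim using the same Green's formula. With this reordering the argument is complete.
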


\begin{proof}
\textbf{(i)} In a first step, we show that
\[ V(\Omega_1)\coloneqq  \{ \phi\in H^{2}(\Omega_{1}): \phi=\partial_\nu \phi = 0 \textup{ on }\Gamma,\; \partial_{\nu}\phi=0 \textup{ on } I \} \]
is dense in $H^1_\Gamma(\Omega_1) \coloneqq  \{ u \in H^1(\Omega_1): u=0\text{ on } \Gamma\}$. For this, let
$ u\in H^{1}_{\Gamma}(\Omega_{1}) $. We choose a function $ \widetilde{\phi} \in C^{\infty}(\Omega_{1}) $ with $ \widetilde{\phi}=1 $ near $ I $,  $ \widetilde{\phi}=0 $ near $ \Gamma $, and $ 0\leq \widetilde{\phi} \leq 1 $ in~$ \Omega_{1} $. We set $ \phi\coloneqq \widetilde{\phi}^{2} $. Note that $ (1-\phi)u \in H^{1}_{0}(\Omega_{1})$ and $ \widetilde{\phi}u\in H^{1}(\Omega_{1}) $. As the test functions are dense in $H^1_0(\Omega_1)$, there exists a sequence $ \big(\phi_{n}^{(1)}\big)_{n\in\N}\subset \mathscr D(\Omega_{1}) $ such that $ \phi_{n}^{(1)}\to (1-\phi)u $ in $H^1(\Omega_1)$ for $n\to\infty$. Moreover, as the domain of the Neumann Laplacian
\[  D(\Delta_{N})\coloneqq  \{ u\in H^{2}(\Omega_{1}):\partial_{\nu}u=0   \textup{ on }   \partial\Omega_1 \} \]
is dense in $H^{1}(\Omega_{1})$ (see \cite{Ouhabaz05}, Lemma~1.25), there exists a sequence  $ \big(\widetilde{\phi}_{n}^{(2)}\big)_{n\in\N}\subset D(\Delta_{N}) $ with $ \widetilde{\phi}^{(2)}_{n}\to \widetilde{\phi}u $ in $ H^{1}(\Omega_{1}) $. Now, setting $ \phi_{n}^{(2)}\coloneqq \widetilde{\phi} \widetilde{\phi}_{n}^{(2)}\in V(\Omega_1)$ for $n\in\N$, we get  $\phi_{n}\coloneqq \phi_{n}^{(1)}+ \phi_{n}^{(2)} \to  (1-\phi)u + {\widetilde{\phi}}^2 u=u$ in $H^{1}(\Omega_{1})$.

\textbf{(ii)} Now we show that  $\mathcal V$ is dense in $ \h$. Comparing the definitions of $\mathcal V$ and $\h$ and noting that test functions are dense in $L^2$ spaces, we only have to consider the case $\gamma>0$ and to show that the embedding
\[ H^{2,1} \subset H^1_\Gamma(\Omega_1)\times L^2(\Omega_2)\]
is dense. Therefore, we fix $ u\in H^{1}_{\Gamma}(\Omega_{1}) $, $ v\in L^{2}(\Omega_{2}) $ and $ \varepsilon>0 $.
Using step (i), we find a function $ \phi_1\in V(\Omega_1) $ with $ \left\|u-\phi_1\right\|_{H^{1}(\Omega_{1})}<\frac{\eps}{2} $. Now, let $ \widetilde{\phi}_{1} \in H^{1}(\Omega_{2})$ such that $ \phi_1=\widetilde{\phi}_{1} $ on $ I $, and choose  $ \phi_2,\phi_3\in \mathscr D(\Omega_{2}) $ with $ \|\widetilde{\phi}_{1}-\phi_2\|_{L^{2}(\Omega_{2})}<\frac{\varepsilon}{4} $ and $ \left\|v-\phi_3\right\|_{L^{2}(\Omega_{2})}<\frac{\eps}{4} $. Then we obtain $ (\phi_1, \widetilde{\phi}_{1}-\phi_2+\phi_3)\in H^{2,1} $ and \[ \|(u, v)-(\phi_1, \widetilde{\phi}_{1}-\phi_2+\phi_3)\|_{H^1(\Omega_1)\times L^{2}(\Omega_{2})}<\eps .\]
Note that the embedding $\mathcal V\subset \h$ is dense and injective, and the same holds for the embedding of $\h$ into $(L^2(\Omega_1))^3\times (L^2(\Omega_2))^2$. Therefore, all embeddings stated in the lemma are dense.
\end{proof}

\begin{teor}\label{!}
For all $\gamma, \rho, m\geq0$, the operator $\mathcal A$ generates a $C_{0}$-semigroup $(\mathcal{S}(t))_{t\geq0}$ of contractions on $\h$. Therefore, for any $w^{(0)}\in D(\mathcal A)$ there exists a unique classical  solution $w\in C^1([0, \infty), \h)\cap C([0, \infty), D(\mathcal A))$ of \eqref{pvi-f}.
\end{teor}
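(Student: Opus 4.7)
The plan is to invoke the Lumer–Phillips theorem for $C_0$-semigroups of contractions. This requires verifying (i) that $\mathcal A$ is dissipative on $\h$, and (ii) that $\mathrm{Range}(I-\mathcal A)=\h$. Density of $D(\mathcal A)$ in $\h$ is then automatic: every m-dissipative operator on a reflexive Banach space (in particular, on the Hilbert space $\h$) is densely defined, and alternatively one can use Lemma \ref{lema de densidad} together with the standard approximation $(nI-\mathcal A)^{-1}nw\to w$ in $\h$. The existence, uniqueness and regularity of the classical solution of \eqref{pvi-f} for $w_0\in D(\mathcal A)$ then follow from standard semigroup theory.

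For (i), I use the identity $\langle\mathcal A w,w\rangle_{\h}=\langle\mathbb A w,w\rangle_{\mathcal V'\times\mathcal V}$ for $w\in D(\mathcal A)$, which comes from $\mathcal A=\mathbb M^{-1}\mathbb A$ and the fact that $\mathbb M$ represents the inner product of $\h$. Setting $\phi=w$ in \eqref{2-6}, the two $H^{2,1}$-pairings that swap the blocks $(w_1,w_4)\leftrightarrow(w_2,w_5)$ and the two terms $-\mu\langle w_3,\Delta w_2\rangle+\mu\langle\Delta w_2,w_3\rangle$ each form a conjugate pair and therefore contribute purely imaginary quantities. Taking real parts leaves exactly
\[
\Re\langle\mathcal A w,w\rangle_{\h}=-\rho\|\nabla w_2\|_{L^2(\Omega_1)}^2-\beta_0\|\nabla w_3\|_{L^2(\Omega_1)}^2-\beta_0\kappa\|w_3\|_{L^2(\Gamma)}^2-m\|w_5\|_{L^2(\Omega_2)}^2\le 0,
\]
valid for all $\gamma,\rho,m\ge 0$.

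For (ii), given $f\in\h$ I would solve $(\mathbb M-\mathbb A)w=\mathbb M f$ in $\mathcal V'$ by reducing it to a coupled elliptic problem. The first and fourth rows become the algebraic relations $w_2=w_1-f_1$ and $w_5=w_4-f_4$, so that the remaining unknowns $(w_1,w_3,w_4)$ live in
\[
\mathcal W:=\{(u,\theta,v)\in H^2(\Omega_1)\times H^1(\Omega_1)\times H^1(\Omega_2):u=\partial_\nu u=0\text{ on }\Gamma,\ \partial_\nu u=0,\ u=v,\ \theta=0\text{ on }I\}.
\]
Testing each of the three remaining equations against the corresponding component of $(\phi_1,\phi_3,\phi_4)\in\mathcal W$ and integrating by parts produces a continuous sesquilinear form $B$ on $\mathcal W\times\mathcal W$. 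The critical point is that the interface integrals on $I$ generated by $\beta_1\Delta^2 w_1$, $\mu\Delta w_3$ and $\beta_2\Delta w_4$ combine—using $\phi_1=\phi_4$ on $I$ (inherited from $H^{2,1}$) and the transmission condition \eqref{int-2}—into a single integral that vanishes identically, while Newton's law absorbs the $\Gamma$-boundary integral into the term $\beta_0\kappa\langle w_3,\phi_3\rangle_{L^2(\Gamma)}$. The two $\mu$-coupling terms form a conjugate pair, hence
\[
\Re B(w,w)=\rho_1\|u\|^2+\gamma\|\nabla u\|^2+\beta_1\|\Delta u\|^2+\rho\|\nabla u\|^2+\rho_0\|\theta\|^2+\beta_0\|\nabla\theta\|^2+\beta_0\kappa\|\theta\|_{L^2(\Gamma)}^2+(\rho_2+m)\|v\|^2+\beta_2\|\nabla v\|^2,
\]
which controls the full $\mathcal W$-norm via the proof of Lemma \ref{2.1} (for the $H^2$-norm of $u$) and Poincaré's inequality. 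Lax–Milgram then yields a unique solution $(w_1,w_3,w_4)\in\mathcal W$, and defining $w_2,w_5$ by the algebraic relations produces a $w\in\mathcal V$ with $\mathbb A w=\mathbb M(w-f)\in\h'$, so $w\in D(\mathcal A)$ solves $(I-\mathcal A)w=f$.

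The main technical hurdle I anticipate is the careful bookkeeping of the surface integrals on $I$ in the Lax–Milgram step: the cancellation relies on aligning the normal-derivative conventions on $\Omega_1$ and $\Omega_2$ (recalling that $\nu$ is outward to $\Omega_1$, hence $-\nu$ is outward to $\Omega_2$) and on invoking \eqref{int-2} exactly when the $H^{2,1}$-constraint $\phi_1=\phi_4$ on $I$ allows the three contributions to combine into one expression. Once this cancellation is in place, Lumer–Phillips produces the contraction semigroup $(\mathcal S(t))_{t\ge 0}$ and the claimed well-posedness.
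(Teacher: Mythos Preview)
Your strategy is correct and matches the paper's at the structural level: Lumer--Phillips, with dissipativity read off from \eqref{2-6} and surjectivity of $I-\mathcal A$ obtained via Lax--Milgram. The dissipativity computation is identical to the paper's \eqref{2-11}.

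The surjectivity argument, however, is organized differently. The paper works with $f\in\mathcal H_\gamma'$ and uses rows~1 and~4 to express $(w_1,w_4)$ in terms of $(w_2,w_5)$ through the isomorphism $\mathrm{diag}(\beta_1\Delta^2,-\beta_2\Delta)\colon H^{2,1}\to(H^{2,1})'$; it then applies Lax--Milgram on the projection space $\mathcal V_0$ to solve for $(w_2,w_3,w_5)$, and only afterwards recovers $(w_1,w_4)$. You instead use the algebraic relations $w_2=w_1-f_1$, $w_5=w_4-f_4$ (valid because your $f$ lies in $\h$, not merely $\mathcal H_\gamma'$) and run a single Lax--Milgram step on $\mathcal W$ for $(w_1,w_3,w_4)$. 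Your route is arguably more economical---one variational problem instead of two---while the paper's decoupling keeps each step smaller and works for right-hand sides in the full antidual. Both exploit the same mechanism for coercivity: the $\mu$-coupling terms form a skew-adjoint pair and drop out of the real part.

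One phrasing issue: when you say the interface integrals ``vanish identically using the transmission condition \eqref{int-2}'', this reads as if \eqref{int-2} were being imposed. In the Lax--Milgram step it is a \emph{natural} condition: you simply define $B$ without the interface terms, solve, and then \eqref{int-2} is recovered a posteriori from the weak equation (cf.\ Remark~\ref{2.3}\,b)). Make this explicit. Apart from that, the proposal is sound.
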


\begin{proof}
Following a standard approach, we show that $\mathcal A$ is dissipative and $1-\mathcal A$ is surjective and apply the theorem of Lumer--Phillips.

Let $w\in D(\mathcal A)$. As $\mathbb M\colon \h\to \mathcal H_\gamma'$ is defined by $\langle \mathbb M w,\phi\rangle_{\mathcal H_\gamma'\times \h} = \langle w,\phi\rangle_{\h}$, we get
\begin{equation}\label{2-11a}
\langle \mathcal A w,w\rangle_{\h} = \langle \mathbb M^{-1}\mathbb Aw,w\rangle_{\h} = \langle \mathbb Aw,w\rangle_{\mathcal H_\gamma'\times \h}.
\end{equation}
By the definition of $\mathbb A$ in \eqref{2-6}, we immediately obtain
\begin{equation}\label{2-11}
\begin{aligned}
  \Re \langle \mathbb Aw,w\rangle_{\mathcal H_\gamma'\times \h} & = -\rho\|\nabla w_2\|^2_{L^2(\Omega_1) }
  - \beta_0\|\nabla w_3\|_{L^2(\Omega_1)}^2 - \beta_0\kappa \|w_3\|_{L^2(\Gamma)}^2 \\
  & -m\| w_5\|^2_{L^2(\Omega_2)} \le 0,
\end{aligned}
\end{equation}
 which shows that $\mathcal A$ is dissipative.

To show that $1-\mathcal A$ is surjective, it suffices by Remark~\ref{2.2} b) to show that $\mathbb M -\mathbb A\colon D(\mathcal A)\to \mathcal H_\gamma'$ is surjective. Let $f\in \mathcal H_\gamma'$. We have to find $w\in D(\mathcal A)$ such that
\begin{equation}
  \label{2-7}
  (\mathbb M -\mathbb A) w = (\widetilde M(D)-\widetilde A(D))w = f
\end{equation}
holds in $\mathcal H_\gamma'$ (cf. Remark~\ref{2.3}). From \eqref{2-7} we obtain
\begin{equation}
  \label{2-8}
  \begin{aligned}
    \beta_1 \Delta^2 w_1 & = \beta_1\Delta^2 w_2 +f_1,\\
    \beta_2 \Delta w_4 & = \beta_2 \Delta w_5 - f_4
  \end{aligned}
\end{equation}
as equality in $(H^{2,1})'$. Replacing this into \eqref{2-7}, we get
\begin{equation}\label{2-9}
 \begin{pmatrix}
  \rho_1 + \beta_1\Delta^2 - (\gamma+\rho)\Delta & \mu \Delta & 0 \\
  -\mu\Delta & \rho_0-\beta_0\Delta & 0 \\
  0 & 0 & \rho_2 + m-\beta_2\Delta
\end{pmatrix}\begin{pmatrix}
  w_2 \\ w_3 \\ w_5
\end{pmatrix}
= \begin{pmatrix}
  f_2-f_1 \\ f_3\\ f_5-f_4
\end{pmatrix} \eqqcolon \tilde f.
\end{equation}
We will solve this weakly with respect to the dual pairing $\mathcal V_0'\times \mathcal V_0$, where $\mathcal V_0$ is the projection of $\mathcal V$ to the components $(w_2,w_3,w_5)$, i.e.
\[\mathcal V_0 \coloneqq  \{(w_2,w_3,w_5)^\top: (0,w_2,w_3,0,w_5)^\top\in\mathcal V\}.\]
So we define the sesquilinear form $b\colon \mathcal V_0\times \mathcal V_0\to \C$ by
\begin{align*}
  b((w_2,&w_3,w_5),(\phi_2,\phi_3,\phi_5)) \coloneqq    \rho_1\langle w_2,\phi_2\rangle_{L^2(\Omega_1)} + \beta_1 \langle\Delta w_2,\Delta \phi_2\rangle_{L^2(\Omega_1)}\\
  &+ (\gamma+\rho) \langle\nabla w_2,\nabla \phi_2\rangle_{L^2(\Omega_1)}  - \mu\langle \nabla w_2,\nabla \phi_3\rangle_{L^2(\Omega_1)} \\
  &  + \mu\langle \nabla w_3,\nabla \phi_2\rangle_{L^2(\Omega_1)} +\rho_0\langle w_3,  \phi_3\rangle_{L^2(\Omega_1)} +\beta_0\langle\nabla w_3,\nabla\phi_3\rangle_{L^2(\Omega_1)}\\
  & + (\rho_2+m)\langle w_5,\phi_5\rangle_{L^2(\Omega_2)} +\beta_2\langle\nabla w_5,\nabla\phi_5\rangle_{L^2(\Omega_2)}.
\end{align*}
Obviously, $b$ is continuous, and a computation of $b((w_2, w_3,w_5),(w_2, w_3,w_5))$ shows that
\[ \Re b((w_2, w_3,w_5),(w_2, w_3,w_5)) \ge C \|(w_2, w_3,w_5)\|_{\mathcal V_0}^2. \]
As the right-hand side of \eqref{2-9} belongs to $\mathcal V_0'$, we may apply the theorem of Lax--Milgram to obtain a unique solution $(w_2,w_3,w_5)\in\mathcal V_0$ of
\[ b((w_2, w_3,w_5),(\phi_2,\phi_3,\phi_5)) = \tilde f ((\phi_2,\phi_3,\phi_5)).\]
By definition of $\mathcal V_0$ we have  $(w_2,w_5)\in H^{2,1}$. Because
\begin{equation}
\label{2-10} \begin{pmatrix}
  \beta_1 \Delta^2 & 0 \\ 0 & -\beta_2 \Delta
\end{pmatrix}\colon H^{2,1} \to (H^{2,1})'
\end{equation}
is an isomorphism due to Remark~\ref{2.2} b),   the right-hand side of \eqref{2-8} belongs to $(H^{2,1})'$. By the same reason, there exists a unique $(w_1,w_4)\in H^{2,1}$ such that \eqref{2-8} holds in $(H^{2,1})'$.

Altogether, we have found $w\in \mathcal V$ such that \eqref{2-7} holds in $\mathcal V'$, i.e.
\[ \big(   (\mathbb M-\mathbb A) w\big) (\phi) =  f (\phi) \quad (\phi\in \mathcal V).\]
As the right-hand side belongs to $\mathcal H_\gamma'$ and $\mathcal V$ is dense in $\h$ by
Lemma~\ref{lema de densidad}, also the left-hand side belongs to $\mathcal H_\gamma'$, and \eqref{2-7} holds in $\mathcal H_\gamma'$. In particular, $\mathbb A w = \mathbb M w - f\in \mathcal H_\gamma'$, which shows that $w\in D(\mathcal A)$. Therefore, $1-\mathcal A$ is surjective, and an application of the theorem of Lumer--Phillips finishes the proof.
\end{proof}

\section{Spectral properties and regularity of the solution}
\label{seccion_de_regularidad}

In this section, we study properties of the spectrum of the operator $\mathcal A$ defined above and show that functions in   its domain have higher regularity. We denote by $\sigma(\mathcal A)$ and $\rho(\mathcal A)$ the spectrum and the resolvent set of $\mathcal A$, respectively. Note that due to Theorem~\ref{!}, the operator $\mathcal A$ is closed and densely defined.

\begin{propo}\label{o in resolvet}
For all $\gamma, m,\rho\ge 0$, we have  $0\in\rho(\mathcal A)$.
\end{propo}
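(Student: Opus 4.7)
\emph{Plan.} Since $\mathcal A$ is closed (as the generator of a $C_0$-semigroup, by Theorem~\ref{!}), it suffices by the closed graph theorem to prove that $\mathcal A\colon D(\mathcal A)\to \h$ is bijective. Using that $\mathbb M$ is an isometric isomorphism $\h\to\mathcal H_\gamma'$ (Remark~\ref{2.2}~b)), solving $\mathcal Aw=f$ for a given $f\in\h$ reduces to constructing $w\in\mathcal V$ such that $\langle\mathbb Aw,\phi\rangle_{\mathcal V'\times\mathcal V}=\langle f,\phi\rangle_{\h}$ for every $\phi\in\mathcal V$; the density of $\mathcal V$ in $\h$ (Lemma~\ref{lema de densidad}) then extends this identity to all $\phi\in\h$, placing $\mathbb Aw$ in $\mathcal H_\gamma'$ and hence $w$ in $D(\mathcal A)$.

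My construction would unfold $\mathbb Aw(\phi)=\langle f,\phi\rangle_{\h}$ by testing \eqref{2-6} against three families of test functions with at most two non-zero slots. Inserting $\phi=(\phi_1,0,0,\phi_4,0)$ with $(\phi_1,\phi_4)\in H^{2,1}$ collapses \eqref{2-6} to $\langle(w_2,w_5),(\phi_1,\phi_4)\rangle_{H^{2,1}}=\langle(f_1,f_4),(\phi_1,\phi_4)\rangle_{H^{2,1}}$; the boundary conditions built into $\h$ ensure $(f_1,f_4)\in H^{2,1}$, and the fact that $\langle\cdot,\cdot\rangle_{H^{2,1}}$ is an inner product (Lemma~\ref{2.1}) forces $w_2=f_1$ and $w_5=f_4$. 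Next, testing with $\phi=(0,0,\phi_3,0,0)$, $\phi_3\in H^1(\Omega_1)$ with $\phi_3=0$ on $I$, yields the weak elliptic problem
\begin{equation*}
\beta_0\langle\nabla w_3,\nabla\phi_3\rangle_{L^2(\Omega_1)}+\beta_0\kappa\langle w_3,\phi_3\rangle_{L^2(\Gamma)}=\mu\langle\Delta f_1,\phi_3\rangle_{L^2(\Omega_1)}-\rho_0\langle f_3,\phi_3\rangle_{L^2(\Omega_1)},
\end{equation*}
which I solve by Lax--Milgram; coercivity on the closed subspace $\{\phi_3\in H^1(\Omega_1)\colon\phi_3=0\text{ on }I\}$ follows from Poincar\'e's inequality. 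Finally, testing with $\phi=(0,\phi_2,0,0,\phi_5)$, $(\phi_2,\phi_5)\in H^{2,1}$, leaves an equation of the shape
\begin{equation*}
\langle(w_1,w_4),(\phi_2,\phi_5)\rangle_{H^{2,1}}=-F(\phi_2,\phi_5),
\end{equation*}
where $F$ is an antilinear functional on $H^{2,1}$ assembled from $f$ and the $w_3$ just produced; the Riesz representation theorem in the Hilbert space $H^{2,1}$ supplies a unique $(w_1,w_4)\in H^{2,1}$.

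Injectivity is then immediate: for $f=0$ each of the three subproblems admits only the trivial solution. The main obstacle I anticipate is essentially bookkeeping --- verifying the continuity of $F$ on $H^{2,1}$ in the last step, because $F$ couples $w_3$ back into the $(w_1,w_4)$-equation through the term $\mu\langle w_3,\Delta\phi_2\rangle_{L^2(\Omega_1)}$ (together with $\rho\langle\nabla f_1,\nabla\phi_2\rangle_{L^2(\Omega_1)}$ and the $\phi_5$-terms coming from $\rho_2\langle f_5,\phi_5\rangle_{L^2(\Omega_2)}$ and $m\langle f_4,\phi_5\rangle_{L^2(\Omega_2)}$). All of these are controlled once one recalls that $\|\Delta\phi_2\|_{L^2(\Omega_1)}$ is equivalent to $\|\phi_2\|_{H^2(\Omega_1)}$ on the first component of $H^{2,1}$ (cf.\ the proof of Lemma~\ref{2.1}), so no genuinely new a priori estimate is required beyond those already developed for Theorem~\ref{!}.
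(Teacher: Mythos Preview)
Your proposal is correct and follows essentially the same route as the paper: decompose $\mathbb Aw=\mathbb Mf$ by testing against $(\phi_1,0,0,\phi_4,0)$, $(0,0,\phi_3,0,0)$, and $(0,\phi_2,0,0,\phi_5)$ to obtain, in turn, $(w_2,w_5)=(f_1,f_4)$, then $w_3$ via Lax--Milgram, then $(w_1,w_4)$ via Riesz in $H^{2,1}$. The paper phrases the first step as testing with $(\phi_1,0,0,0,0)$ and $(0,0,0,\phi_4,0)$ separately and invoking the isomorphism \eqref{2-10}, but this is the same content as your inner-product argument in $H^{2,1}$.
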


\begin{proof}
We  show that $\mathcal A\colon D(\mathcal A)\to\h$ is bijective. Let $f\in \h$. Then $\mathcal A w = f$ is equivalent to
\begin{equation}\label{3-1}
\langle \mathbb Aw,\phi\rangle_{\mathcal H_\gamma'\times \h }
=\langle\mathbb Mf ,\phi\rangle_{\mathcal H_\gamma'\times \h } \quad (\phi\in\h).
\end{equation}
Choosing $\phi=(\phi_1,0,0,0,0)^\top$ and $\phi=(0,0,0,\phi_4,0)^\top$, we obtain $\beta_1\Delta^2 w_2 = \beta_1\Delta^2 f_1$ and $-\beta_2\Delta w_5=-\beta_2\Delta f_4$, respectively, which has the unique solution
$w_2\coloneqq  f_1$ and $w_5\coloneqq  f_4$ (see \eqref{2-10}). Now choosing $\phi=(0,0,\phi_3,0,0)$, we obtain
\begin{equation}\label{3-2}
\beta_0\langle\nabla w_3,\nabla\phi_3\rangle_{L^2(\Omega_1)} +\beta_0 \kappa \langle w_3,\phi_3\rangle_{L^2(\Gamma)} = \langle \mu\Delta f_1-\rho_0 f_3 , \phi_3\rangle_{L^2(\Omega_1)}
\end{equation}
for all $\phi_3\in H^1(\Omega_1)$ with $\phi_3=0$ on $I$. As $\mu\Delta f_1-\rho_0f_3\in L^2(\Omega_1)$, the right-hand side is a continuous conjugate linear functional of $\phi_3$. Let us denote the left-hand side of \eqref{3-2} by $b(w_3,\phi_3)$. Then $b$ is a continuous sesquilinear form in the Hilbert space $\{ w_3\in H^1(\Omega_1): w_3=0\text{ on } I\}$.  From Remark~\ref{2.2} a) we know that the left-hand side is equivalent to the $H^1(\Omega_1)$-norm, which shows that $b(\cdot,\cdot)$ is coercive. Now an application of the theorem of Lax--Milgram yields the existence of a unique solution~$w_3$ of \eqref{3-2}.

For the remaining components $w_1$ and $w_4$, we choose $\phi=(0,\phi_2,0,0,\phi_5)^\top$ in~\eqref{3-1} and obtain
\begin{equation}\label{3-3}
\begin{aligned}
-\big\langle (w_1,w_4), &(\phi_2,\phi_5)\big\rangle_{H^{2,1}} =  \mu \langle w_3,\Delta\phi_2\rangle_{L^2(\Omega_1)} \\
& + \rho\langle\nabla w_2,\nabla\phi_2\rangle_{L^2(\Omega_1)} +  m\langle w_5,\phi_5\rangle_{L^2(\Omega_2)}\\
& +\rho_1\langle f_2, \phi_2\rangle_{L^2(\Omega_1)}+\gamma\langle\nabla f_2, \nabla\phi_2\rangle_{L^2(\Omega_1)} + \rho_2\left\langle f_5, \phi_5\right\rangle_{L^2(\Omega_2)}\\
& \eqqcolon R(\phi_2,\phi_5).
\end{aligned}
\end{equation}
Because of $(\phi_2,\phi_5)\in H^{2,1}$, the conjugate linear functional $R\colon H^{2,1}\to \C$ is well-defined and continuous. By the theorem of Riesz, there exists a unique solution $(w_1,w_4)\in H^{2,1}$ of \eqref{3-3}. Setting $w\coloneqq (w_1,\ldots,w_5)^\top$, we obtain $w\in \mathcal V$ (note here that $(w_2,w_5)^\top=(f_1,f_4)^\top\in H^{2,1}$), and $w$ is a solution of \eqref{3-1}. In particular, $\mathbb Aw\in \mathcal H_\gamma'$ by construction, so we have $w\in D(\mathcal A)$, and $\mathcal A$ is surjective. As the solution $w$ constructed above is unique, we also obtain the injectivity of $\mathcal A$.
As $\mathcal A\colon D(\mathcal A)\to\h$ is bijective and closed, we get $0\in\rho(\mathcal A)$.
\end{proof}

For the proof of higher regularity of the solution $w$, we need a priori-estimates from the theory of  parameter-elliptic boundary value problems as developed, e.g., in  \cite{AGRANOVICH1964ELLIPTIC}. We recall the main definitions and results (see \cite{Agranovich15}, Section 7.1). Let $(A(D),B_1(D),\dots, B_m(D))$ be a boundary value problem in some domain $\Omega\subset\R^n$ with $A(D) =   \sum_{|\alpha|\leq2m}a_{\alpha}\partial^{\alpha}$ and $B_{j}(D)= \sum_{|\beta|\leq m_{j}}b_{j\beta}\partial^{\beta}$, where $a_\alpha, b_{j\beta}\in\C$ and $m_j<2m$. Then the principal symbols of $A$ and $B_j$ are defined by $A(i\xi)\coloneqq \sum_{|\alpha|=2m}a_{\alpha}(i\xi)^{\alpha}$ and $B_{j}(i\xi)\coloneqq \sum_{|\beta|=m_{j}}b_{j\beta}(i\xi)^{\beta}$, respectively. The operator $A(D)$ is called  parameter-elliptic if its principal symbol satisfies
$$
\lambda-A(i\xi)\not =0\quad(\operatorname{Re}\lambda\geq0,\,\xi\in
\mathbb{R}^{n},\,(\lambda,\xi)\not =0).
$$
The boundary value problem is $(A,B_1,\dots,B_m)$ is called  parameter-elliptic if $A(D)$ is parameter-elliptic and if the following Shapiro--Lopatinskii condition holds:

Let $x_{0}\in\partial\Omega$, and rewrite the boundary value problem
in the coordinate system associated with $x_{0}$, which is obtained from the original one by a rotation after which the positive $x_{n}$-axis has the direction of the interior normal vector to $\partial\Omega$ at $x_{0}$. Then the trivial solution $w=0$ is the only stable solution of the ordinary differential equation on the half-line
\begin{align*}
\left(  \lambda-A(i\xi',\partial_{n})\right)  w(x_{n})  &  =0\quad(x_{n}
\in(0,\infty)),\\
B_{j}(i\xi',\partial_{n})w(0)  &  =0\quad(j=1,\dots,m)
\end{align*}
for all $\xi'\in\mathbb{R}^{n-1}$ and $\Re\lambda\geq0$ with $(\xi',\lambda)\not =0$.

In \cite{AGRANOVICH1964ELLIPTIC}, Theorem~5.1, the following result was shown:

\begin{teor}
  \label{Cor Regu boundaryProbNOnHom}
  Let $(A,B_1,\dots,B_m)$ be parameter-elliptic in $\Omega$. Then for sufficiently large $\lambda_{0}>0$, the boundary
	value problem
	\begin{equation*}
	\begin{array}
	[c]{rl}%
	\left(  \lambda_{0}-A(D)\right)  u=f & \text{in } \ \Omega \text{,}\\
	B_{j}(D)u=g_{j} & \text{on } \ \partial\Omega\text{,  }j=1,...,m\text{, }\\
	\end{array}
	\end{equation*}
	has a unique solution $u\in H^{2m}(\Omega)$, and the a priori-estimate
\begin{align*}
\|u\|_{H^{2m}(\Omega)} & \le C \Big( \|f\|_{L^2(\Omega) }+  \sum_{j=1}^m \|g_j\|_{H^{2m-m_j-1/2}(\partial\Omega)}\Big)
\end{align*}
	holds with a constant $C>0$ which depends on $\lambda_{0}$ but not on $u$ or
	on the data.
\end{teor}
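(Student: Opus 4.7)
The plan is to follow the classical Agranovich--Vishik approach: first derive an a priori estimate, then obtain existence via the method of continuity. Throughout, the key structural facts are that parameter-ellipticity prevents the characteristic symbol from vanishing for $\Re\lambda \geq 0$, and the Shapiro--Lopatinskii condition forces the boundary operators to be invertible on the stable manifold of the model ODE.

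\textbf{Step 1 (Reduction to model problems).} I would fix a finite cover of $\overline\Omega$ by coordinate patches with a subordinate smooth partition of unity $\{\chi_k\}$. On interior patches, $\chi_k u$ solves a whole-space problem $(\lambda_0 - A(D))(\chi_k u) = \chi_k f + [A(D),\chi_k]u$ in $\R^n$. On boundary patches, after a rotation and a $C^\infty$-diffeomorphism straightening $\partial\Omega$, we arrive at a constant-coefficient boundary value problem in the half-space $\R^n_+ = \{x_n > 0\}$ with boundary operators acting on $\{x_n = 0\}$. Since the coefficients are constant from the outset, no coefficient-freezing is needed; commutators from the cutoffs and the Jacobian of the boundary-flattening map contribute only lower-order terms, which are absorbable into the left-hand side once $\lambda_0$ is chosen large.

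\textbf{Step 2 (Half-space solvability).} For the half-space model, I would apply the partial Fourier transform in the tangential variable $x' \in \R^{n-1}$, reducing the PDE to the family of ODEs
\[
(\lambda_0 - A(i\xi', \partial_n))\hat u(\xi', x_n) = \hat f(\xi', x_n), \qquad B_j(i\xi', \partial_n)\hat u(\xi', 0) = \hat g_j(\xi').
\]
Parameter-ellipticity implies that $z \mapsto \lambda_0 - A(i\xi', z)$ has exactly $m$ roots in each open half-plane $\Re z \gtrless 0$ for $(\xi', \lambda_0) \neq 0$. Solving the ODE by projecting onto the stable subspace and inverting the boundary map via the Shapiro--Lopatinskii condition yields an explicit solution operator whose symbol is smooth and satisfies parameter-dependent symbol estimates in the natural weight $(|\xi'|^{2m} + |\lambda_0|)^{1/(2m)}$. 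A Mikhlin-type Fourier multiplier argument then delivers the half-space a priori estimate in the correct parameter-dependent Sobolev norms, which in particular gives the $L^2$-based bound required.

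\textbf{Step 3 (Global estimate and existence).} Summing the local estimates through the partition of unity and absorbing the commutator and lower-order remainder terms by choosing $\lambda_0$ large enough yields the global a priori estimate stated in the theorem. For existence, I would use the method of continuity: connect $(A, B_1, \ldots, B_m)$ to a reference problem that is already known to be bijective for large $\lambda_0$ (for instance, $-\Delta^m$ with Dirichlet traces) through a continuous homotopy preserving parameter-ellipticity and Shapiro--Lopatinskii. The uniform a priori estimate along the homotopy, together with bijectivity at one endpoint, propagates bijectivity to the other. The main obstacle is Step 2: one must carry the parameter $\lambda_0$ through the entire symbol-level construction and verify the Mikhlin-type multiplier bounds uniformly in the parameter, which is the technical core of the Agranovich--Vishik calculus.
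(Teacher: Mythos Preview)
The paper does not give its own proof of this theorem: it is stated as a quotation of Theorem~5.1 in Agranovich--Vishik \cite{AGRANOVICH1964ELLIPTIC}, with no argument supplied. Your outline is a faithful sketch of precisely that Agranovich--Vishik machinery (localization, tangential Fourier transform, Shapiro--Lopatinskii invertibility on the stable subspace, parameter-dependent multiplier estimates, method of continuity), so in substance you are reproducing the proof the paper defers to. There is nothing to compare; your proposal simply fills in what the paper leaves as a citation.
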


\begin{obser}
  \label{3.1}
  a) We will apply this also in the case $\Omega=\Omega_1$, where $\partial\Omega_1= I\cup\Gamma$. It was shown in \cite{martinez2019regularity}, Remark~4.4, that we may also consider different boundary operators (even with different orders) in $I$ and $\Gamma$, respectively. One obtains unique solvability and the above a priori-estimate, where now the boundary norm for $g_j$ is given as the sum $\|g_j\|_{H^{2m-m_j'-1/2}(I)}+ \|g_j\|_{H^{2m-m_j''-1/2}(\Gamma)}$ with $m_j'$ and $m_j''$ being the order of $B_j$ on $I$ and $\Gamma$, respectively.

  b) It is well known (see, e.g., \cite{Agranovich15}, Subsection~7.1) that the Laplace operator is parameter-elliptic with Dirichlet boundary condition and with Neumann boundary condition. As only the principal part is involved in the definition of parameter-ellipticity, also $\Delta$ with mixed boundary condition $\partial_{\nu}u+\kappa u=0$ is parameter-elliptic. The same holds for $-\Delta^2$ with boundary conditions $u=\partial_\nu u=0$ (\cite{Agranovich15}, Remark~7.1.2).
\end{obser}

\begin{lem}\label{Lem ParameterEllipOps}
The operator $A(D) \coloneqq  -\Delta^{2}$ in $\Omega_{1}$, supplemented with the boundary operators $B_1(D) u \coloneqq  \partial_\nu u$ and $B_2(D) u \coloneqq  \partial_\nu \Delta u$, is parameter-elliptic.
\end{lem}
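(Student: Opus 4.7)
The plan is to verify the two ingredients of parameter-ellipticity recalled just before the lemma. The interior condition is immediate: the principal symbol of $A(D) = -\Delta^2$ is $A(i\xi) = -|\xi|^4$, so $\lambda - A(i\xi) = \lambda + |\xi|^4$; its real part $\Re\lambda + |\xi|^4 \ge 0$ vanishes only when $\Re\lambda = 0$ and $\xi = 0$, and then $\lambda + |\xi|^4 = \lambda \neq 0$ by the hypothesis $(\lambda,\xi) \neq 0$.

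For the Shapiro--Lopatinskii condition I fix a boundary point $x_0 \in \partial\Omega_1$, pass to local coordinates with the interior normal along the positive $x_n$-axis, and study the ODE
\[ \bigl(\lambda + (\partial_n^2 - |\xi'|^2)^2\bigr)\, w(x_n) = 0, \quad x_n \in (0,\infty),\]
subject to $\partial_n w(0) = 0$ and $\partial_n (\partial_n^2 - |\xi'|^2) w(0) = 0$ (up to a sign from the orientation of $\nu$, which is irrelevant for the check). The characteristic relation $(\tau^2 - |\xi'|^2)^2 = -\lambda$ splits into two branches $\tau^2 = |\xi'|^2 + s$ with $s \in \{s_+, s_-\}$, $s_\pm^2 = -\lambda$. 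A short check using $\Re\lambda \ge 0$ and $(\lambda,\xi') \neq 0$ rules out $\tau = 0$ and the degenerate equality $|\xi'|^2 + s_\pm = 0$, so each branch contributes exactly one stable exponent $\tau_j$ with $\Re \tau_j < 0$. The goal is then to show that imposing the two boundary conditions on the two-dimensional stable space forces the solution to vanish.

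In the generic case $\lambda \neq 0$ the exponents $\tau_1, \tau_2$ are distinct, the stable solution reads $c_1 e^{\tau_1 x_n} + c_2 e^{\tau_2 x_n}$, and evaluating the two boundary functionals produces a $2\times 2$ system whose determinant factors as $\tau_1 \tau_2 (s_- - s_+)$, nonzero because $\tau_j \neq 0$ and $s_+ \neq s_-$. The one place where the argument genuinely needs extra care, and which I expect to be the main (though still routine) obstacle, is the degenerate case $\lambda = 0$ with $\xi' \neq 0$: the two branches collide and the stable space acquires a Jordan-chain vector, becoming $\mathrm{span}\{e^{-|\xi'|x_n},\, x_n e^{-|\xi'|x_n}\}$. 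I will check directly that the first boundary condition forces $c_2 = |\xi'| c_1$ and that the second, using that $(\partial_n^2 - |\xi'|^2)$ annihilates $e^{-|\xi'|x_n}$ and sends $x_n e^{-|\xi'|x_n}$ to a nonzero multiple of $e^{-|\xi'|x_n}$, then forces $c_2 = 0$. Combining the two cases yields the Shapiro--Lopatinskii condition at every boundary point and finishes the proof.
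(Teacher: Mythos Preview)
Your proposal is correct and follows essentially the same approach as the paper: verify the interior symbol condition, then check the Shapiro--Lopatinskii condition by analysing the stable solutions of the frozen-coefficient ODE, splitting into the generic case $\lambda\neq 0$ (two distinct stable exponents, nonvanishing $2\times 2$ determinant) and the degenerate case $\lambda=0$ (Jordan block). The only cosmetic difference is that the paper first simplifies the second boundary condition to $\partial_n^3 w(0)=0$ using $\partial_n w(0)=0$, which makes the determinant computation slightly more direct, whereas you keep the factored form $\partial_n(\partial_n^2-|\xi'|^2)$ throughout; the resulting determinants $\tau_1\tau_2(\tau_2^2-\tau_1^2)$ and $\tau_1\tau_2(s_--s_+)$ are of course the same.
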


\begin{proof}
	Let $\lambda\in\C$, $\xi\in\R^2$ with $\Re\lambda\geq0$ and $(\lambda, \xi)\neq0$. Because of
	$\lambda-A(i\xi)=\lambda+|\xi|^4\neq0$, the operator $-\Delta^2$  is parameter-elliptic. For the Shapiro--Lopatinskii condition, we have to solve the  ordinary differential equation
	\begin{align}
  (\lambda+(\partial_2^2-\xi_1^2)^2) w(x_2) & = 0 \quad (x_2>0),\label{3-5a}\\
  \partial_2 w(0) & = 0,\label{3-5b}\\
  \partial_2^3 w(0) - \xi_1^2\partial_2 w(0) & = 0.\label{3-5c}
\end{align}
Note that by \eqref{3-5b}, we can replace \eqref{3-5c}  by $\partial_2^3 w(0)=0$. Let $\tau_{1,2} = -\sqrt{\xi_1^2\pm \sqrt{-\lambda}}$ be the two roots of the polynomial $\lambda-A(i\xi_1,\cdot)$ with negative real part.
For $\lambda\not=0$, we have $\tau_1\not=\tau_2$, and therefore every stable solution of \eqref{3-5a} has the form $w(x_2) = c_1 e^{\tau_1 x_2} + c_2 e^{\tau_2 x_2}$. Inserting this into the initial conditions, we obtain
\[ \begin{pmatrix}
  \tau_1 & \tau_2 \\ \tau_1 ^3 & \tau_2^3
\end{pmatrix}\binom{c_1}{c_2} =0.\]
As the determinant of this matrix equals $\tau_1\tau_2(\tau_2^2-\tau_1^2)\not=0$, we get $c_1=c_2=0$ and therefore $w=0$.

If $\lambda=0$, we have  $\tau_1 = \tau_2 = -|\xi_1|$, and $w(x_2) = (c_1+c_2x_2)e^{\tau_1 x_2}$. Now the initial conditions yield
\[ \begin{pmatrix}
  \tau_1 & 1 \\ \tau_1 ^3 & 3 \tau_1^2
\end{pmatrix}\binom{c_1}{c_2} =0,\]
which implies $w=0$ again.
\end{proof}

In the following, we will show that $D(\mathcal A)$ is embedded into a tuple of Sobolev spaces of higher regularity. For the continuity of the embedding, we use the following observation.

\begin{lem}\label{lem_continuity}
  Let $A\colon H\supset D(A)\to H$ be a closed operator in the Hilbert space $H$, and let $V$ be a Hilbert space. If  $D(A)$ is a subset of $V$, then we have the continuous embedding $D(A)\subset V$.
\end{lem}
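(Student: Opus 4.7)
The natural strategy is to invoke the closed graph theorem. Since $A$ is closed, the domain $D(A)$ endowed with the graph norm $\|u\|_{D(A)} := \bigl(\|u\|_H^2+\|Au\|_H^2\bigr)^{1/2}$ is itself a Hilbert space (this is a standard consequence of closedness: the graph of $A$ is a closed subspace of $H\times H$, and $D(A)$ with the graph norm is isometric to it). The inclusion $\iota\colon D(A)\to V$, $u\mapsto u$, is then a linear map between two Hilbert spaces, and continuity of $\iota$ follows from the closed graph theorem as soon as we verify that its graph is closed in $D(A)\times V$.

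To check the closed graph property, take a sequence $(u_n)\subset D(A)$ with $u_n\to u$ in the graph norm and $\iota(u_n)=u_n\to v$ in $V$. The graph norm convergence yields in particular $u_n\to u$ in $H$, while by hypothesis $u_n\to v$ in $V$. The one subtle point is to identify $u$ and $v$ as the same element of $V$: in the intended applications (where $H=\h$ and $V$ is a tuple of Sobolev spaces on $\Omega_1$ and $\Omega_2$), both $H$ and $V$ sit continuously inside a common Hausdorff topological vector space, namely a product of $L^2$ spaces, or, more generally, the space $\mathscr{D}'(\Omega_1)^3\times\mathscr{D}'(\Omega_2)^2$ of distributions. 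Uniqueness of limits in that ambient space forces $u=v$, so the graph is closed and $\iota$ is continuous.

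The main step is thus the simple but crucial observation that closedness of $A$ upgrades the graph norm to a complete one, making the closed graph theorem applicable. The only potential obstacle—identifying the two limits as the same element—is automatic in every concrete setting in which this lemma will be used, since the ``subset'' relation $D(A)\subset V$ is realised through a common ambient space of functions or distributions.
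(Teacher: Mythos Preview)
Your proof is correct and follows the same strategy as the paper: both invoke the closed graph theorem after observing that $D(A)$ with the graph norm is complete. The only difference is in how the two limits are identified. You appeal to a common ambient Hausdorff space (distributions), which is indeed available in every application; the paper instead factors the inclusion as $D(A)\to V\cap H\to V$ and applies the closed graph theorem to the first map. Since convergence in $V\cap H$ forces convergence in $H$, the two limits are identified by uniqueness of limits in $H$ itself, without naming an external ambient space. Of course, for $V\cap H$ to be a Banach space one still needs $V$ and $H$ to be compatible, so the paper's version carries the same implicit assumption you make explicit; it just packages it more neatly.
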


\begin{proof}
  As $A$ is closed, $D(A)$ with the graph norm is a Hilbert space. We show that $\operatorname{id}\colon D(A)\to V\cap H$ is a closed operator. For this, let $(x_n)_{n\in\N}\subset D(A)$ be a sequence with $x_n\to x$ in $D(A)$ and $x_n\to y$ in $V\cap H$. Then we obtain $x_n\to x$ in $H$ by the definition of the graph norm, and also $x_n\to y$ in $H$ by the definition of the norm in $V\cap H$. This yields $x=y$, and $\operatorname{id}\colon D(A)\to V\cap H$ is closed and, by the closed graph theorem, continuous. As the embedding $V\cap H\to V$ is continuous by the definition of the norms, we obtain the continuity of $\operatorname{id}\colon D(A)\to V$.
\end{proof}

The elliptic regularity results above are the key for the strong solvability
of the transmission problem, that is, for higher regularity of the weak solution.

\begin{teor}\label{Th regularity}
	Let $\gamma, \rho, m\geq0$. Then the following embeddings are continuous.
	\begin{enumerate}[\upshape (i)]
		\item 	$ D(\mathcal{A})\;\,\subset H^2(\Omega_1) \times H^2(\Omega_1) \times H^2(\Omega_1) \times H^2(\Omega_2)\times H^1(\Omega_2) $,
		\item $ D(\mathcal{A}^{2})\subset H^4(\Omega_1) \times H^2(\Omega_1) \times H^2(\Omega_1) \times H^2(\Omega_2)\times H^1(\Omega_2) $,
		\item $ D(\mathcal{A})\subset H^4(\Omega_1) \times H^2(\Omega_1) \times H^2(\Omega_1) \times H^2(\Omega_2)\times H^1(\Omega_2) $ for $ \gamma=0 $.
	\end{enumerate}
	In consequence, if $w^0\in D(\mathcal{A}^{2})$ then $w(t)\coloneqq \mathcal{S}(t)w^0$ $(t\geq0)$ is the unique solution of problem \eqref{pl-1}-\eqref{inc} 
	and satisfies the boundary and transmission conditions in the strong sense of traces. In the case $ \gamma=0 $ we get the same result even for $ w^{0}\in D(\mathcal{A}) $.
\end{teor}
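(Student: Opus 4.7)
The plan is to bootstrap regularity from the identity $\mathbb{A}w=\mathbb{M}\mathcal A w$, reading off scalar PDEs (as in Remark~\ref{2.3}(a)) and feeding them into parameter-elliptic regularity via Theorem~\ref{Cor Regu boundaryProbNOnHom} together with Remark~\ref{3.1} and Lemma~\ref{Lem ParameterEllipOps}. Continuous dependence will follow from the closed-graph-type argument of Lemma~\ref{lem_continuity} once the set-theoretic inclusion is established. Throughout, the base-line regularity from $w\in\mathcal V$ gives $w_1,w_2\in H^2(\Omega_1)$, $w_3\in H^1(\Omega_1)$, $w_4,w_5\in H^1(\Omega_2)$.

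For (i), write $g\coloneqq\mathcal A w\in\h$. Testing the weak identity against $\phi=(0,0,\phi_3,0,0)^\top$ yields in the distributional sense $\mu\Delta w_2+\beta_0\Delta w_3=\rho_0 g_3$ in $\Omega_1$; since $w_2\in H^2$ and $g_3\in L^2$, this forces $\Delta w_3\in L^2(\Omega_1)$. A boundary-integral argument in the spirit of Remark~\ref{2.3}(b) produces the Robin condition $\partial_\nu w_3+\kappa w_3=0$ on $\Gamma$ (first in $H^{-1/2}$, then in $H^{1/2}$ a~posteriori), whereas $w_3=0$ on $I$ comes from $\mathcal V$. Thus $w_3$ solves a mixed Dirichlet/Robin parameter-elliptic Laplace problem, and Remark~\ref{3.1} together with Theorem~\ref{Cor Regu boundaryProbNOnHom} gives $w_3\in H^2(\Omega_1)$. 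Analogously, testing on the membrane side gives $\beta_2\Delta w_4=\rho_2 g_5+m w_5\in L^2(\Omega_2)$, while the Dirichlet datum $w_4|_I=w_1|_I\in H^{3/2}(I)$ inherited from $\mathcal V$ and $w_1\in H^2$ lets elliptic regularity for the Dirichlet Laplacian yield $w_4\in H^2(\Omega_2)$.

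The main step is the biharmonic upgrade $w_1\in H^4(\Omega_1)$ needed for (ii) and (iii). The second row of $\mathbb{A}w=\mathbb{M}g$ reads
\[ \beta_1\Delta^2 w_1 = -(\rho_1-\gamma\Delta)g_2+\rho\Delta w_2-\mu\Delta w_3\quad\text{in}\ \Omega_1.\]
In case (iii) ($\gamma=0$, $w\in D(\mathcal A)$) the right-hand side is in $L^2(\Omega_1)$ directly, since $g_2\in L^2$ and $w_2,w_3\in H^2$ by (i). In case (ii) ($w\in D(\mathcal A^2)$) set $g\coloneqq\mathcal A w\in D(\mathcal A)$; applying (i) to $g$ gives $g_2\in H^2(\Omega_1)$, so again the right-hand side lies in $L^2(\Omega_1)$. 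For the boundary data on $\Gamma$ we have $w_1=\partial_\nu w_1=0$ from $\mathcal V$, and on $I$ we have $\partial_\nu w_1=0$ from $\mathcal V$ together with the transmission identity $\beta_1\partial_\nu\Delta w_1=-\beta_2\partial_\nu w_4-\mu\partial_\nu w_3$, obtained by testing the weak equation against $\phi=(0,\phi_2,0,0,\phi_5)^\top$ with $(\phi_2,\phi_5)\in H^{2,1}$, integrating by parts twice on the plate and once on the membrane, and subtracting off the volume equations already derived; since $w_3,w_4\in H^2$ by (i), the right-hand side lies in $H^{1/2}(I)$, which then pins down $\beta_1\partial_\nu\Delta w_1$ in the same space. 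The resulting BVP for $-\Delta^2$ carries the pair $(u,\partial_\nu u)$ on $\Gamma$ (parameter-elliptic by Remark~\ref{3.1}(b)) and the pair $(\partial_\nu u,\partial_\nu\Delta u)$ on $I$ (parameter-elliptic by Lemma~\ref{Lem ParameterEllipOps}); combining them via Remark~\ref{3.1}(a) and applying Theorem~\ref{Cor Regu boundaryProbNOnHom} after the standard $\lambda_0$-shift yields $w_1\in H^4(\Omega_1)$. Lemma~\ref{lem_continuity} promotes each set-theoretic embedding in (i)--(iii) to a continuous one, and the final claim on strong traces of the boundary and transmission conditions is then exactly Remark~\ref{2.3}(b).

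The main obstacle is producing the transmission condition on $I$ in a trace space regular enough ($H^{1/2}$) to feed into the parameter-elliptic a~priori estimate: a~priori $w_1$ is only in $H^2$, so $\partial_\nu\Delta w_1$ is defined only in $H^{-3/2}(I)$. The trick is that the remaining two terms $\beta_2\partial_\nu w_4$ and $\mu\partial_\nu w_3$ have already been upgraded to $H^{1/2}(I)$ in step (i), so the weak transmission identity forces $\beta_1\partial_\nu\Delta w_1\in H^{1/2}(I)$ as a by-product, closing the bootstrap.
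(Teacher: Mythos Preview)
Your approach is correct and follows the same bootstrap-via-parameter-elliptic-regularity strategy as the paper. The main structural difference is in the biharmonic step for $w_1$: the paper tests the identity $\langle\mathbb Aw,\phi\rangle_{\mathcal H_\gamma'\times\h}=\langle\mathbb Mf,\phi\rangle$ with $\phi=(0,\phi_2,0,0,0)^\top\in\h$ (no constraint $\phi_2|_I=0$ is needed in $\h$, only in $\mathcal V$), which decouples the plate from the membrane and produces the boundary datum $h=\beta_1^{-1}\partial_\nu(-\mu w_3+\rho w_2+\gamma f_2)$ on $I$; you instead test with $(\phi_2,\phi_5)\in H^{2,1}$ and subtract the membrane volume equation to recover the physical transmission datum $-\beta_1^{-1}(\beta_2\partial_\nu w_4+\mu\partial_\nu w_3)$. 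Both data lie in $H^{1/2}(I)$ after step (i), so both close the argument; the paper's route is slightly cleaner since it never touches the $\Omega_2$ side.

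One point where the paper is more explicit: to pass from ``the data of the biharmonic BVP are regular enough'' to ``$w_1\in H^4$'' it constructs the unique strong solution $\widetilde{w_1}\in H^4$ of the shifted problem, then shows $z\coloneqq\widetilde{w_1}-w_1$ satisfies $\lambda_0\langle z,\phi_2\rangle+\langle\Delta z,\Delta\phi_2\rangle=0$ for all admissible $\phi_2$, forcing $z=0$. Your final paragraph (``the weak transmission identity forces $\beta_1\partial_\nu\Delta w_1\in H^{1/2}(I)$ as a by-product'') is the right intuition, but Theorem~\ref{Cor Regu boundaryProbNOnHom} is an existence/uniqueness statement for $H^4$ solutions, not an a~priori estimate applicable to an $H^2$ function; the comparison with $\widetilde{w_1}$ is what makes this rigorous, and it is worth spelling out.
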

\begin{proof}
	(i)
	Let $w\in D(\mathcal{A})$ and $f\coloneqq \mathcal{A} w$.
 First, we show $ w_{3}\in H^{2}(\Omega_{1}) $.
	As in~\eqref{3-1}, we get
	\begin{align}\label{eq_Aw=f}
	\langle \mathbb Aw,\phi\rangle_{\mathcal H_\gamma'\times \h }=\langle\mathbb Mf ,\phi\rangle_{\mathcal H_\gamma'\times \h }
		\quad (\phi\in\h).
	\end{align}
	 As we already have seen in~\eqref{3-2}, for $ \phi_{3}\in H^{1}(\Omega_{1}) $ with $ \phi_{3}=0 $ on $ I $, we have
	\begin{align*}
		\left\langle g, \phi_{3}\right\rangle_{L^2(\Omega_1)}
		=\left\langle \nabla w_{3}, \nabla\phi_{3}\right\rangle_{L^2(\Omega_1)}+\left\langle \kappa w_{3}, \phi_{3}\right\rangle_{L^2(\Gamma)},
	\end{align*}
	where $ g\coloneqq \frac{1}{\beta_0}\left(\mu\Delta w_{2}-\rho_0 f_{3}\right)$.
		By Theorem~\ref{Cor Regu boundaryProbNOnHom} and Remark~\ref{3.1}, there exists some $\lambda_0>0$ such that the problem
	\begin{alignat*}{2}
	\left(\lambda_0-\Delta\right)\widetilde{w_{3}}=&\lambda_0 w_{3}+g \ && \textup{ in } \ \Omega_{1},\\
	\partial_{\nu}\widetilde{w_{3}}=&-\kappa w_{3} \ &&\textup{ on } \ \Gamma,\\
	\widetilde{w_{3}}=&0 \ &&\textup{ on } \ I
	\end{alignat*}
	has a unique solution $\widetilde{w_{3}}\in H^2(\Omega_1)$.
	Integration by parts shows that $z\coloneqq \widetilde{w_{3}}-w_{3}$ satisfies
	\begin{align*}
		0=&\langle (\lambda_{0}-\Delta)\widetilde{w_{3}} - \lambda_{0}w_{3}-g,\phi_{3}\rangle_{L^{2}(\Omega_{1})}
		= \lambda_0\left\langle z, \phi_{3}\right\rangle_{L^2(\Omega_1)}+\left\langle \nabla z, \nabla\phi_{3}\right\rangle_{L^2(\Omega_1)}
	\end{align*}
	for all $\phi_{3}\in H^{1}(\Omega_{1})$ with $ \phi_{3}=0 $ on $ I $. Choosing $\phi_{3}=z$, we get $w_{3}=\widetilde{w_{3}}\in H^2(\Omega_1)$.
	
	Now, we prove $w_{4}\in H^2(\Omega_2)$. We choose $ \phi=(0,0,0,0,\phi_{5})$ with $ \phi_{5}\in H^{1}_{0}(\Omega_{2}) $ in~\eqref{eq_Aw=f}. As in~\eqref{3-3}, we obtain
	\begin{equation*}\label{omega2}
	\langle\nabla w_{4}, \nabla \phi_{5}\rangle_{L^2 (\Omega_{2})} = \langle\widetilde{g},\phi_{5}\rangle_{L^2 (\Omega_{2})} ,
	\end{equation*}
	where $ \widetilde{g}\coloneqq -\frac{1}{\beta_{2}}(mw_{5}+\rho_{2}f_{5}) $.
	By Theorem~\ref{Cor Regu boundaryProbNOnHom} and Remark~\ref{3.1}b), there exists a unique $\widetilde{w_{4}}\in H^2(\Omega_2)$ such that
	\begin{alignat*}{2}
		-\Delta\widetilde{w_{4}}=&\widetilde{g} \ && \textup{ in } \ \Omega_2\\
		\widetilde{w_{4}} =& w_{1} \ && \textup{ on } \ I.
	\end{alignat*}
	Therefore $z\coloneqq \widetilde{w_{4}}-w_{4}\in H_{0}^{1}(\Omega_{2})$ fulfils
	\begin{align*}
		0 = \langle-\Delta \widetilde{w_{4}} - \widetilde{g}, \phi_{5}\rangle_{L^2 (\Omega_{2})}
		= \langle\nabla z,\nabla \phi_{5}\rangle_{L^2 (\Omega_{2})}
	\end{align*}
	for all $ \phi_{5}\in H_{0}^{1}(\Omega_{2}) $. By choosing $ \phi_{5}=z $, we  obtain $w_{4}=\widetilde{w_{4}}\in H^2(\Omega_2)$.

	(ii)
	Now, let $ w\in D(\mathcal{A}^{2}) $. We show $ w_{1}\in H^{4}(\Omega_{1}) $. In~\eqref{eq_Aw=f} we can choose $ \phi=(0,\phi_{2},0,0,0) $ for all $ \phi_{2}\in H^{2}(\Omega_{1}) $ with $ \phi_{2}=\partial_{\nu}\phi_{2}=0 $ on $ \Gamma $ and $ \partial_{\nu}\phi_{2}=0 $ on $ I $.  Integration by parts yields to
	\begin{align*}
		&\langle\Delta w_{1}, \Delta \phi_{2}\rangle_{L^2 (\Omega_{1})} \\
		=& \frac{1}{\beta_{1}} \left(  -\mu \langle w_{3},\Delta \phi_{2}\rangle_{L^2 (\Omega_{1})} -  \langle\nabla (\rho w_{2} + \gamma f_{2}), \nabla \phi_{2}\rangle_{L^2 (\Omega_{1})} - \rho_{1}\langle f_{2},\phi_{2}\rangle_{L^2 (\Omega_{1})}\right)\\
		=& \langle g^{*}, \phi_{2}\rangle_{L^2 (\Omega_{1})} - \langle h,\phi_{2}\rangle_{L^2 (I)},
	\end{align*}
	where $ g^{*}\coloneqq  \frac{1}{\beta_1}\left( \Delta (-\mu w_{3} + \rho w_{2}+\gamma f_{2}) -\rho_1f_{2}\right)$ and $ h\coloneqq \frac{1}{\beta_{1}}\partial_{\nu}(-\mu w_{3}+\rho w_{2}+\gamma f_{2}) $.
	By Theorem \ref{Cor Regu boundaryProbNOnHom} and Lemma~\ref{Lem ParameterEllipOps}, there is a $ \lambda_{0}>0 $ such that there exists a unique solution $\widetilde{w_1}\in H^4(\Omega_1)$ of the boundary value problem
	\begin{alignat*}{2}
	\left(\lambda_0+\Delta^2\right)\widetilde{w_1}=&\lambda_0 w_1+g^* \ &&\textup{ in } \ \Omega_1,\\
	\widetilde{w_1}=\partial_{\nu}\widetilde{w_1}=&0 \ &&\textup{ on } \ \Gamma,\\
	\partial_{\nu}\widetilde{w_1}=&0 \ &&\textup{ on } \ I,\\
	\partial_{\nu}(\Delta\widetilde{w_1})=&h \ && \textup{ on } \ I.
	\end{alignat*}
		Note that $ g^{*}\in L^{2}(\Omega_{1}) $ and $h\in H^{1}(\Omega_{1})$ since  $ w\in D(\mathcal{A}^{2}) $. Therefore, all boundary conditions hold in the trace sense.
	Using integration by parts, $ z\coloneqq \widetilde{w_{1}}-w_{1} $ fulfils
	\begin{align*}
	0=\big\langle (\lambda_0+\Delta^2)\widetilde{w_1}-\lambda_{0}w_{1}-g^{*}, \phi_2\big\rangle_{L^2(\Omega_1)}
	=\lambda_0\left\langle z, \phi_2\right\rangle_{L^2(\Omega_1)}+\left\langle \Delta z, \Delta\phi_2\right\rangle_{L^2(\Omega_1)}
	\end{align*}
	for all $ \phi_{2}\in H^{2}(\Omega_{1}) $ with $ \phi_{2}=\partial_{\nu}\phi_{2}=0 $ on $ \Gamma $ and $ \partial_{\nu}\phi_{2}=0 $ on $ I $. By choosing $ \phi_{2}=z $, we  obtain $w_{1}=\widetilde{w_{1}}\in H^4(\Omega_1)$.
	
	(iii) Let $ \gamma=0 $ and $ w\in D(\mathcal{A}) $. Following the proof of (ii), we get $ w_{1}\in H^{4}(\Omega_{1}) $.
	
	Due to Lemma~\ref{lem_continuity}, all embeddings are continuous.

\end{proof}

\begin{obser}
	\rm{
		By the last proof, we see that the corresponding assertions of Theorem~\ref{Th regularity} hold true if the plate is isothermal.
	}
\end{obser}

\begin{coro}\label{cor_regularity}
	For all $\gamma, \rho, m \geq0$, we have the continuous embedding
	\begin{equation*}\label{embedding}
	D(\mathcal{A})\subset H^3(\Omega_1) \times H^2(\Omega_1) \times H^2(\Omega_1) \times H^2(\Omega_2)\times H^1(\Omega_2).
	\end{equation*}
	\end{coro}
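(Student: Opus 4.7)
The case $\gamma=0$ is immediate, since Theorem~\ref{Th regularity}(iii) already yields the stronger embedding $D(\mathcal A)\subset H^{4}(\Omega_1)\times H^{2}(\Omega_1)\times H^{2}(\Omega_1)\times H^{2}(\Omega_2)\times H^{1}(\Omega_2)$; continuity of the embedding of the corollary then follows from Lemma~\ref{lem_continuity}. For $\gamma>0$, Theorem~\ref{Th regularity}(i) already gives the claimed regularity in every slot \emph{except} the first, where one only has $w_1\in H^{2}(\Omega_1)$. The whole task thus reduces to showing $w_1\in H^{3}(\Omega_1)$ together with an estimate of $\|w_1\|_{H^3(\Omega_1)}$ by $\|w\|_{\h}+\|\mathcal A w\|_{\h}$, from which Lemma~\ref{lem_continuity} will deliver the desired continuous embedding.

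The plan is to replay the argument of the proof of Theorem~\ref{Th regularity}(ii) one derivative lower. Fix $w\in D(\mathcal A)$ and set $f\coloneqq \mathcal A w\in\h$. Testing $\langle \mathbb A w,\phi\rangle=\langle \mathbb M f,\phi\rangle$ against $\phi=(0,\phi_2,0,0,\phi_5)^\top\in\mathcal V$ with $\phi_2=\partial_\nu\phi_2=0$ on $\Gamma$ and $\partial_\nu\phi_2=0$ on $I$, and integrating by parts as in the proof of Theorem~\ref{Th regularity}(ii), identifies $w_1$ as a distributional solution of
\[
\beta_1\Delta^2 w_1=g^*\ \text{in}\ \Omega_1,\quad w_1=\partial_\nu w_1=0\ \text{on}\ \Gamma,\quad \partial_\nu w_1=0,\ \beta_1\partial_\nu(\Delta w_1)=h\ \text{on}\ I,
\]
where $g^*\coloneqq -\rho_1 f_2+\gamma\Delta f_2+\rho\Delta w_2-\mu\Delta w_3$ and $h\coloneqq -\beta_2\partial_\nu w_4-\mu\partial_\nu w_3+\gamma\partial_\nu f_2$ (using $\partial_\nu w_2=0$ on $I$, which is built into the definition of $\mathcal V$). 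The decisive difference with Theorem~\ref{Th regularity}(ii) lies in the regularity of the data: since $f\in\h$ rather than $D(\mathcal A)$, we only have $f_2\in H^{1}(\Omega_1)$, so $\gamma\Delta f_2\in H^{-1}(\Omega_1)$ and hence $g^*\in H^{-1}(\Omega_1)$, with norm controlled by $\|w\|_{\h}+\|\mathcal A w\|_{\h}$; the trace $\partial_\nu f_2|_I$ is defined through the Lions--Magenes Green formula as an element of $H^{-1/2}(I)$ (from $f_2\in H^1$ and $\Delta f_2\in H^{-1}$), so $h\in H^{-1/2}(I)$ with the same type of bound.

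By Lemma~\ref{Lem ParameterEllipOps} combined with Remark~\ref{3.1}(a), the shifted operator $\lambda_0+\beta_1\Delta^2$ with the above boundary operators on $\Gamma$ and $I$ is parameter-elliptic. I would then invoke the Hilbert-scale version of Theorem~\ref{Cor Regu boundaryProbNOnHom} at the negative Sobolev index $s=-1$ (the standard transposition/duality extension of the Agranovich a priori-estimate available in~\cite{Agranovich15}): the data $g^*\in H^{-1}(\Omega_1)$ and $h\in H^{-1/2}(I)$ fit precisely the corresponding requirement and yield, for $\lambda_0$ sufficiently large, a unique solution $\tilde u\in H^{3}(\Omega_1)$ satisfying $\|\tilde u\|_{H^3(\Omega_1)}\le C(\|w\|_{\h}+\|\mathcal A w\|_{\h})$. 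Identifying $\tilde u$ with $w_1$ then proceeds exactly as in Theorem~\ref{Th regularity}(ii): the difference $z\coloneqq \tilde u-w_1\in H^{2}(\Omega_1)$ solves the homogeneous BVP, and pairing with $z$ in $L^2(\Omega_1)$ and integrating by parts gives $\lambda_0\|z\|_{L^2(\Omega_1)}^{2}+\beta_1\|\Delta z\|_{L^2(\Omega_1)}^{2}=0$, whence $z=0$.

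The main obstacle I anticipate is this appeal to parameter-elliptic regularity at the negative Sobolev index $s=-1$, since Theorem~\ref{Cor Regu boundaryProbNOnHom} is quoted in the paper only at the level of $L^{2}$-data producing $H^{2m}$-solutions. If one prefers to remain literally within that statement, a workaround is to split $g^*=g_0+\gamma\Delta f_2$ with $g_0\in L^{2}(\Omega_1)$ and absorb the singular part by subtracting from $w_1$ a Poisson-type lift $v\in H^{3}(\Omega_1)$ of $f_2$ (with $\Delta v=\gamma f_2/\beta_1$ and convenient boundary conditions), thereby reducing the problem for $w_1-v$ to a biharmonic BVP with $L^{2}$-data to which Theorem~\ref{Cor Regu boundaryProbNOnHom} applies directly, at the price of more delicate bookkeeping of the induced inhomogeneous boundary data on $I$.
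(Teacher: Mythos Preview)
Your approach is genuinely different from the paper's. The paper does not touch the PDE for $w_1$ at all: it observes that $-\mathcal A$ is m-accretive, quotes the abstract identity $D(\mathcal A)=(\h,D(\mathcal A^2))_{1/2,2}$ from interpolation theory, and then simply interpolates the first-slot embeddings $H^2(\Omega_1)$ (from Theorem~\ref{Th regularity}(i)) and $H^4(\Omega_1)$ (from Theorem~\ref{Th regularity}(ii)) to obtain $H^3(\Omega_1)$. This is a two-line argument once Theorem~\ref{Th regularity} is in place, and it completely sidesteps the boundary-data bookkeeping you are facing.

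Your main line of argument has a genuine gap. The assertion that $\partial_\nu f_2|_I\in H^{-1/2}(I)$ follows ``through the Lions--Magenes Green formula from $f_2\in H^1$ and $\Delta f_2\in H^{-1}$'' is not valid: for any $f_2\in H^1(\Omega_1)$ one has $\Delta f_2\in H^{-1}(\Omega_1)=(H^1_0(\Omega_1))'$ automatically, so this hypothesis carries no information, and the normal trace of a generic $H^1$-function is not defined even as an element of $H^{-1/2}$. (The Green-formula definition of $\partial_\nu u$ in $H^{-1/2}$ requires $\Delta u\in L^2$, not merely $H^{-1}$.) Consequently the boundary datum $h$ you write down is not well-defined, and the appeal to the negative-index version of Theorem~\ref{Cor Regu boundaryProbNOnHom} cannot be made as stated.

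Your proposed workaround, however, can be made to succeed. If you take $v\in H^3(\Omega_1)$ solving $\beta_1\Delta v=\gamma f_2$ with, say, $v=0$ on $\partial\Omega_1$, then for test functions with $\partial_\nu\phi_2=0$ on $\partial\Omega_1$ one has $\beta_1\langle\Delta v,\Delta\phi_2\rangle=\gamma\langle f_2,\Delta\phi_2\rangle=-\gamma\langle\nabla f_2,\nabla\phi_2\rangle$, so the problematic term cancels \emph{variationally} and no normal trace of $f_2$ is ever needed. The reduced problem for $\tilde w_1:=w_1-v$ then has $L^2$ right-hand side and boundary data in $H^{3/2}$ and $H^{1/2}$ (coming from $v\in H^3$ and $w_3,w_4\in H^2$), to which Theorem~\ref{Cor Regu boundaryProbNOnHom} applies directly, yielding $\tilde w_1\in H^4$ and hence $w_1\in H^3$. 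So your route is salvageable, but it is substantially heavier than the paper's interpolation argument and you should drop the Lions--Magenes justification in favour of the lift.
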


\begin{proof}
	From Theorem~\ref{!} we know that $ \mathcal{A}\colon \mathcal{H}_{\gamma}\supset D(\mathcal{A})\longrightarrow\mathcal{H}_{\gamma} $ is the generator of a $ C_{0} $-semigroup of contractions on $ \mathcal{H}_{\gamma} $. So, $-\mathcal{A}$ is an m-accretive operator (see Section 4.3. from \cite{lunardi2018interpolation}). By Corollary 4.30 and Corollary 4.37 from \cite{lunardi2018interpolation}, we obtain $D(\mathcal{A})=\left(\mathcal{H}_{\gamma}, D(\mathcal{A}^{2})\right)_{\frac{1}{2},2} $. Due to Theorem~\ref{Th regularity} it holds
	\begin{align*}
	D(\mathcal{A})\subset& H^2(\Omega_1) \times H^2(\Omega_1) \times H^2(\Omega_1) \times H^2(\Omega_2)\times H^1(\Omega_2),\\
	D(\mathcal{A}^{2})\subset& H^4(\Omega_1) \times H^2(\Omega_1) \times H^2(\Omega_1) \times H^2(\Omega_2)\times H^1(\Omega_2).
	\end{align*}
	By Proposition 5.12 from \cite{Bienvenido2019Vector-valued}, we have
	\begin{align*}
	D(\mathcal{A})&\subset
	(H^{2}(\Omega_{1}), H^{4}(\Omega_{1})_{\frac{1}{2},2}\times H^2(\Omega_1) \times H^2(\Omega_1) \times H^2(\Omega_2)\times H^1(\Omega_2).
	\end{align*}
	By Theorem 1 of Subsection 4.3.1. from \cite{Triebel78}, we get $\left(H^2(\Omega_1), H^4(\Omega_1)\right)_{\frac{1}{2}, 2}= H^{3}(\Omega_1)$.	
\end{proof}

The following result allows us to affirm that the spectrum $\sigma(\mathcal{A})$ of $\mathcal{A}$ coincides with its point spectrum $\sigma_p(\mathcal{A})$.

\begin{propo}
	The operator $\mathcal{A}^{-1} : \h\longrightarrow\h$ is compact.
\end{propo}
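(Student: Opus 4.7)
The plan is to factor $\mathcal{A}^{-1}$ through a Sobolev tuple of higher regularity than $\mathcal{H}_\gamma$ and then invoke the Rellich--Kondrachov compact embedding theorem, exploiting the smoothness of $\partial\Omega_1$ and $\partial\Omega_2$ and the boundedness of both domains.

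First I would observe that since $0\in\rho(\mathcal A)$ by Proposition~\ref{o in resolvet}, the inverse $\mathcal{A}^{-1}$ maps $\mathcal H_\gamma$ bijectively onto $D(\mathcal A)$ and is bounded as an operator into $D(\mathcal A)$ equipped with the graph norm. Now Corollary~\ref{cor_regularity} gives the continuous embedding
\[ D(\mathcal A) \subset \mathcal H^\sharp \coloneqq H^3(\Omega_1)\times H^2(\Omega_1)\times H^2(\Omega_1)\times H^2(\Omega_2)\times H^1(\Omega_2),\]
so that $\mathcal A^{-1}\colon \mathcal H_\gamma \to \mathcal H^\sharp$ is bounded.

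Next, by Lemma~\ref{2.1}, the norm of $\mathcal H_\gamma$ is equivalent to the standard norm of $H^2(\Omega_1)\times H^1(\Omega_1)\times L^2(\Omega_1)\times H^1(\Omega_2)\times L^2(\Omega_2)$ when $\gamma>0$, and to that of $H^2(\Omega_1)\times L^2(\Omega_1)\times L^2(\Omega_1)\times H^1(\Omega_2)\times L^2(\Omega_2)$ when $\gamma=0$. In either case each component of $\mathcal H^\sharp$ sits in a strictly higher Sobolev scale than the corresponding component of $\mathcal H_\gamma$ on a bounded domain of class $C^4$. Hence the Rellich--Kondrachov theorem, applied componentwise, yields that the embedding $\mathcal H^\sharp \hookrightarrow \mathcal H_\gamma$ is compact.

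Finally, writing $\mathcal A^{-1}\colon \mathcal H_\gamma \to \mathcal H_\gamma$ as the composition of the bounded map $\mathcal A^{-1}\colon \mathcal H_\gamma \to \mathcal H^\sharp$ with the compact embedding $\mathcal H^\sharp \hookrightarrow \mathcal H_\gamma$, we conclude that $\mathcal A^{-1}$ is compact. I do not expect a real obstacle here: the only subtle point is making sure the higher-regularity tuple strictly gains Sobolev smoothness in every component for \emph{both} $\gamma>0$ and $\gamma=0$, which is precisely what Corollary~\ref{cor_regularity} together with Lemma~\ref{2.1} guarantees.
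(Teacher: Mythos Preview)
Your argument is correct and is essentially identical to the paper's: both factor $\mathcal A^{-1}$ as the bounded map $\mathcal H_\gamma\to D(\mathcal A)$ (from $0\in\rho(\mathcal A)$) followed by the compact inclusion $D(\mathcal A)\hookrightarrow\mathcal H_\gamma$ obtained from Corollary~\ref{cor_regularity} and Rellich--Kondrachov. One small imprecision: the space $\mathcal H^\sharp$ does not embed into $\mathcal H_\gamma$ (elements of $\mathcal H^\sharp$ need not satisfy the boundary and transmission conditions), so the final composition should be written as $\mathcal A^{-1}\colon\mathcal H_\gamma\to D(\mathcal A)\hookrightarrow\mathcal H_\gamma$, using that $D(\mathcal A)\subset\mathcal H^\sharp$ is compactly embedded in the ambient Sobolev product and that $\mathcal H_\gamma$ is a closed subspace thereof---which is exactly how the paper phrases it.
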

\begin{proof}
	By Corollary~\ref{cor_regularity} and the Rellich--Kondrachov theorem, we have
	\begin{align*}
	D(\mathcal{A})\subset& H^3(\Omega_1) \times H^2(\Omega_1) \times H^2(\Omega_1) \times H^2(\Omega_2)\times H^1(\Omega_2)\\
	\overset{c}{\subset}& H^2(\Omega_1) \times H^1(\Omega_1) \times L^2(\Omega_1) \times H^1(\Omega_2)\times L^2(\Omega_2).
	\end{align*}
	As $\h$ is a closed subspace of $H^2(\Omega_1) \times H^1(\Omega_1) \times L^2(\Omega_1) \times H^1(\Omega_2)\times L^2(\Omega_2)$, we get $ D(\mathcal{A})\overset{c}{\subset} \h $.
	Therefore the identity operator $\operatorname{id} \colon D(\mathcal{A})\longrightarrow\h$ is compact. Proposition \ref{o in resolvet} implies the continuity of the operator $\mathcal{A}^{-1} \colon \h\longrightarrow D(\mathcal{A})$. In consequence, $\mathcal{A}^{-1}=\operatorname{id}\circ\mathcal{A}^{-1} \colon \h\longrightarrow\h$ is a compact operator.
\end{proof}

\begin{propo}\label{strongly stable}
	If $\gamma, \rho\geq0$ and $m>0$, then $i\R\subset\rho(\mathcal{A})$.
\end{propo}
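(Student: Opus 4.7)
The plan is to combine the compactness of $\mathcal A^{-1}$ just established with the dissipation identity \eqref{2-11} and elliptic uniqueness. Since $\mathcal A^{-1}$ is compact, $\sigma(\mathcal A) = \sigma_p(\mathcal A)$, so together with $0 \in \rho(\mathcal A)$ from Proposition~\ref{o in resolvet} it suffices to rule out eigenvalues of the form $i\beta$ with $\beta \in \R\setminus\{0\}$. Assume for contradiction that $\mathcal A w = i\beta w$ for some $w \in D(\mathcal A)\setminus\{0\}$ and $\beta \neq 0$. Taking the real part of $\langle \mathcal A w, w\rangle_\h = i\beta \|w\|_\h^2$ and using \eqref{2-11a} together with \eqref{2-11}, I obtain
\[ \rho \|\nabla w_2\|_{L^2(\Omega_1)}^2 + \beta_0 \|\nabla w_3\|_{L^2(\Omega_1)}^2 + \beta_0\kappa \|w_3\|_{L^2(\Gamma)}^2 + m \|w_5\|_{L^2(\Omega_2)}^2 = 0. \]
Since $m > 0$, this forces $w_5 = 0$ in $\Omega_2$; since $\beta_0 > 0$, together with the condition $w_3 = 0$ on $I$ built into $\mathcal V$ and the Poincar\'e estimate in Remark~\ref{2.2}\,a), I get $w_3 = 0$ in $\Omega_1$.

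Next I would extract pointwise algebraic relations among the components from the weak eigenvalue equation $\mathbb A w = i\beta \mathbb M w$ in $\h'$. Testing against $\phi = (\phi_1, 0, 0, \phi_4, 0)^\top$ for arbitrary $(\phi_1, \phi_4) \in H^{2,1}$, the definition \eqref{2-6} of $\mathbb A$ collapses to $\langle (w_2, w_5), (\phi_1, \phi_4) \rangle_{H^{2,1}}$, while the right-hand side becomes $i\beta \langle (w_1, w_4), (\phi_1, \phi_4) \rangle_{H^{2,1}}$. Non-degeneracy of the $H^{2,1}$-inner product then yields $w_2 = i\beta w_1$ and $w_5 = i\beta w_4$; combined with $w_5 = 0$ and $\beta \neq 0$ the latter gives $w_4 = 0$. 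Testing instead with $\phi = (0, 0, \phi_3, 0, 0)^\top$ for $\phi_3 \in H^1(\Omega_1)$ with $\phi_3 = 0$ on $I$, and using $w_3 = 0$, the weak equation reduces to $\mu \langle \Delta w_2, \phi_3 \rangle_{L^2(\Omega_1)} = 0$; since such $\phi_3$'s contain $H^1_0(\Omega_1)$ and are therefore dense in $L^2(\Omega_1)$, and since $\mu > 0$, I conclude $\Delta w_2 = 0$, and hence $\Delta w_1 = 0$ in $\Omega_1$.

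Finally, the boundary and transmission conditions built into $\h$ give $w_1 = 0$ on $\Gamma$ and $w_1 = w_4 = 0$ on $I$, so $w_1 \in H^2(\Omega_1)$ is harmonic with vanishing Dirichlet trace on all of $\partial \Omega_1$; uniqueness for the Dirichlet problem yields $w_1 = 0$, hence $w_2 = i\beta w_1 = 0$, and altogether $w = 0$, contradicting $w \neq 0$.

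The main subtle point I expect is the passage from the purely weak identity $\mathbb A w = i\beta \mathbb M w$ in $\h'$ to the pointwise identifications $w_2 = i\beta w_1$ and $w_5 = i\beta w_4$: rather than trying to invoke the biharmonic isomorphism \eqref{2-10} on isolated components, the clean route is to choose the test function so as to isolate exactly the $H^{2,1}$-inner product, which works because $(w_1, w_4)$ and $(w_2, w_5)$ already belong to $H^{2,1}$ inside $\mathcal V$. The thermal equation then plays the decisive role of transferring the dissipation on $w_3$ into an elliptic equation for $w_1$, so that the argument genuinely uses $\mu > 0$ in addition to $m > 0$.
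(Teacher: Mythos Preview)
Your argument is correct and follows essentially the same route as the paper: compactness reduces to point spectrum, the dissipation identity kills $w_3$ and $w_5$, the first-row relation gives $w_2=i\beta w_1$, $w_5=i\beta w_4$ (hence $w_4=0$), and testing with $\phi_3$ yields $\Delta w_2=0$. The only difference is the endgame: the paper splits into $\rho>0$ (where $\nabla w_2=0$ plus Poincar\'e finishes immediately) and $\rho=0$ (where it computes the imaginary part of $\langle\mathbb A w,w\rangle$ to obtain $\beta_1\|\Delta w_2\|^2=\lambda^2(\rho_1\|w_2\|^2+\gamma\|\nabla w_2\|^2)$ and then uses $\Delta w_2=0$), whereas you bypass both the case distinction and the imaginary-part computation by observing that $w_1$ is harmonic with $w_1=0$ on $\Gamma$ and $w_1=w_4=0$ on $I$, so Dirichlet uniqueness gives $w_1=0$ directly. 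Your variant is a mild streamlining and works uniformly in $\rho\ge 0$.
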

\begin{proof}
	Let us suppose $\gamma\geq0$ and $m>0$. Since $\mathcal{A}^{-1}$ is compact,  the spectrum of $\mathcal{A}$ consists of eigenvalues only. Thus, we have to establish that there are no purely imaginary eigenvalues.
	Let $0\neq\lambda\in\R$ and $ w\in D(\mathcal{A})$ with $\mathcal{A} w=i\lambda w$. By \eqref{2-11a}, we have
	\begin{align}\label{eq_Aw}
\langle \mathbb Aw,\phi\rangle_{\mathcal H_\gamma'\times \h }=	i\lambda\langle w ,\phi\rangle_{ \h }
	\quad (\phi\in\h).
	\end{align}
	Using Remark~\ref{2.2}, we see that $ i\lambda w_{1}=w_{2} $ and $ i\lambda w_{4}=w_{5} $.
	Choosing $ \phi=w $ in \eqref{eq_Aw}, we obtain
	\begin{align}\label{eq_imaginarypart}
	 \lambda\textup{Im}\langle \mathbb{A}w,w \rangle_{\mathcal H_\gamma'\times \h }=	\lambda^{2} \|w\|_{\h}^{2},
	\end{align}
	and
	\begin{align}\label{eq_realpart}
	\begin{split}
			0=&  \Re \langle \mathbb Aw,w\rangle_{\mathcal H_\gamma'\times \h}  \\
	=& -\rho\|\nabla w_2\|^2_{L^2(\Omega_1)}
	- \beta_0\|\nabla w_3\|_{L^2(\Omega_1)}^2 - \beta_0\kappa \|w_3\|_{L^2(\Gamma)}^2
	-m\| w_5\|^2_{L^2(\Omega_2)}
	\end{split}
	\end{align}
	as in the proof of Theorem~\ref{!}.
	For $\rho>0$, we get $w_{2}=w_{3}=w_{5}=0$ due to \eqref{eq_realpart} and Poincar\'e's inequality. We conclude $ w=0 $.

	If $\rho=0$, we have $w_{3}=w_{5}=0$ and therefore $ w_{4}=0 $, i.e., $w=(w_1,w_2,0,0,0)^\top$. Equality~\eqref{eq_imaginarypart} leads to
	\begin{align*}
	2 \beta_1\|\Delta w_{2}\|_{L^2(\Omega_1)}^{2} & = 	
	  \lambda\textup{Im} \langle \mathbb Aw,w\rangle_{\mathcal H_\gamma'\times\h}\\
	&=\beta_1\|\Delta w_{2}\|_{L^2(\Omega_1)}^{2}+
	\lambda^{2}\left(\rho_{1}\|w_{2}\|_{L^{2}(\Omega_{1})}^{2}
	+ \gamma\|\nabla w_{2}\|_{L^{2}(\Omega_{1})}^{2}\right),
	\end{align*}
	\textcolor{black}{therefore,  we have}
	\begin{equation}\label{i3}
	\beta_1\|\Delta w_{2}\|_{L^2(\Omega_1)}^{2}
		= \lambda^{2}\left(\rho_{1}\|w_{2}\|_{L^{2}(\Omega_{1})}^{2}
		+ \gamma\|\nabla w_{2}\|_{L^{2}(\Omega_{1})}^{2}
		\right).
	\end{equation}
	Now, we choose $ \phi=(0,0,\phi_{3},0,0)\in\h $ in \eqref{eq_Aw} and obtain
$ 	0=  \mu \langle \Delta w_2, \phi_3\rangle_{L^2(\Omega_1)} $
	for all $ \phi_{3}\in H^{1}(\Omega_{1}) $ with $ \phi_{3}=0 $ on $ I $. 	
	In consequence, we get $\Delta w_{2}=0$. From~\eqref{i3} it follows that $w_{2}=0$. Finally, for any case of $\rho$, we have shown that $w=0$.
\end{proof}

\begin{obser}\label{remark strongly stable}
	a) Proposition~\ref{strongly stable} also holds true if the plate is isothermal and $\rho>0$. In the case $\rho=0$, the above proof does not work in its present form in the isothermal case.\\
	b) We will see in the proof of Theorem \ref{teorcondition} that $i\R\subset\rho(\mathcal A)$ holds also for $m=0$.
\end{obser}

\section{Exponential stability in the case of damped membrane}\label{Exponential_stability}
In this section, we will prove the exponential stability of the solution of the system \eqref{pl-1}-\eqref{inc} when the membrane is damped and when $\rho>0$ or $\rho=\gamma=0$.




\begin{teor}\label{exp stab}
	If $m>0$ and $\rho>0$, then for all $\gamma\ge 0$
 the semigroup $\left(\mathcal{S} (t)\right)_{t\geq0}$ generated by $\mathcal A$ is exponentially stable, that is, there exist constants $C\geq1$ and $\delta>0$ such that $\left\|\mathcal{S}  (t)\right\|_{\mathcal{L}(\h)}\leq Ce^{-\delta t}$ for all $t\geq0$.
\end{teor}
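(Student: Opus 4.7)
The plan is to apply the Gearhart--Pr\"uss--Huang characterization of exponential stability for bounded $C_0$-semigroups on Hilbert spaces: since $(\mathcal S(t))_{t\ge0}$ is a contraction semigroup by Theorem~\ref{!} and Proposition~\ref{strongly stable} supplies $i\R\subset\rho(\mathcal A)$ under $m>0$, it suffices to establish the uniform resolvent bound
\begin{equation*}
\sup_{\lambda\in\R}\|(i\lambda-\mathcal A)^{-1}\|_{\mathcal L(\h)}<\infty.
\end{equation*}
We argue by contradiction: if this fails, then using continuity of the resolvent on any compact subset of $i\R$ together with a standard normalization, one finds sequences $\lambda_n\in\R$ with $|\lambda_n|\to\infty$ and $w^n=(w_1^n,\dots,w_5^n)\in D(\mathcal A)$ with $\|w^n\|_\h=1$ such that $F^n\coloneqq(i\lambda_n-\mathcal A)w^n\to 0$ in $\h$. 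The goal is then to derive $\|w^n\|_\h\to 0$, contradicting the normalization.

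The first step exploits the dissipation identity \eqref{2-11a}--\eqref{2-11}: taking the real part of $\langle F^n,w^n\rangle_\h$ yields
\begin{equation*}
\rho\|\nabla w_2^n\|^2_{L^2(\Omega_1)}+\beta_0\|\nabla w_3^n\|^2_{L^2(\Omega_1)}+\beta_0\kappa\|w_3^n\|^2_{L^2(\Gamma)}+m\|w_5^n\|^2_{L^2(\Omega_2)}\to 0,
\end{equation*}
and since $\rho,m>0$ each term vanishes. Poincar\'e applied to $w_3^n$ (with $w_3^n=0$ on $I$) yields $w_3^n\to 0$ in $H^1(\Omega_1)$; for $\gamma>0$, Poincar\'e applied to $w_2^n$ (with $w_2^n=0$ on $\Gamma$) gives $w_2^n\to 0$ in $H^1(\Omega_1)$. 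For $\gamma=0$ the boundary trace of $w_2^n$ on $\Gamma$ is not a priori zero, but the resolvent relation $w_2^n=i\lambda_n w_1^n-F_1^n$ combined with $w_1^n=0$ on $\Gamma$ yields $w_2^n|_\Gamma=-F_1^n|_\Gamma\to 0$ in $L^2(\Gamma)$, and the equivalent norm $(\|\cdot\|^2_{L^2(\Gamma)}+\|\nabla\cdot\|^2_{L^2(\Omega_1)})^{1/2}$ on $H^1(\Omega_1)$ then yields $w_2^n\to 0$ in $L^2(\Omega_1)$. Dividing the resolvent relations $i\lambda_n w_1^n=w_2^n+F_1^n$ and $i\lambda_n w_4^n=w_5^n+F_4^n$ by $\lambda_n$ provides $w_4^n\to 0$ in $L^2(\Omega_2)$ and shows that $\lambda_n w_1^n,\lambda_n w_4^n$ remain bounded in the respective $L^2$-spaces.

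The main obstacle is to prove $\|\Delta w_1^n\|_{L^2(\Omega_1)}\to 0$ and $\|\nabla w_4^n\|_{L^2(\Omega_2)}\to 0$; together with the above, this will close the argument. The idea is to eliminate $w_2^n,w_5^n$ by substituting the two resolvent relations into the second and fifth components of the resolvent equation, producing the reduced system
\begin{align*}
-\lambda_n^2(\rho_1-\gamma\Delta)w_1^n+\beta_1\Delta^2 w_1^n-i\lambda_n\rho\Delta w_1^n+\mu\Delta w_3^n &= G_1^n\quad\text{in }\Omega_1,\\
-\lambda_n^2\rho_2 w_4^n+i\lambda_n m w_4^n-\beta_2\Delta w_4^n &= G_4^n\quad\text{in }\Omega_2,
\end{align*}
where $G_1^n,G_4^n$ are linear expressions in $F^n$ and $\lambda_n F^n$. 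We plan to test the first equation against $\overline{w_1^n}$ and the second against $\overline{w_4^n}$, integrate by parts, and invoke the clamped conditions $w_1^n=\partial_\nu w_1^n=0$ on $\Gamma$ together with the transmission conditions $w_1^n=w_4^n$, $\partial_\nu w_1^n=0$ and $\beta_1\partial_\nu\Delta w_1^n+\beta_2\partial_\nu w_4^n+\mu\partial_\nu w_3^n=0$ on $I$. After an additional integration by parts in the volume term $\mu\int\Delta w_3^n\,\overline{w_1^n}$, the boundary integrals on $I$ cancel pairwise (the $\partial_\nu w_3^n$-contribution produced by the transmission condition cancels the one from the $\mu\Delta w_3^n$ term, and the $\beta_2\partial_\nu w_4^n$-contribution cancels between the two test identities).

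The real part of the resulting identity isolates $\beta_1\|\Delta w_1^n\|^2+\beta_2\|\nabla w_4^n\|^2$; the quadratic-in-$\lambda_n$ terms $\lambda_n^2\rho_1\|w_1^n\|^2$, $\lambda_n^2\gamma\|\nabla w_1^n\|^2$ and $\lambda_n^2\rho_2\|w_4^n\|^2$ convert via $\lambda_n w_1^n=-i(w_2^n+F_1^n)$ and $\lambda_n w_4^n=-i(w_5^n+F_4^n)$ into $\rho_1\|w_2^n+F_1^n\|^2$, $\gamma\|\nabla(w_2^n+F_1^n)\|^2$ and $\rho_2\|w_5^n+F_4^n\|^2$, all $o(1)$ by the previous step. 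The cross term $\mu\Re\langle\nabla w_3^n,\nabla w_1^n\rangle_{L^2(\Omega_1)}$ is $o(1)$ since $\nabla w_3^n\to 0$ in $L^2$ while $\nabla w_1^n$ is bounded in $L^2$, and the right-hand side is controlled using $F^n\to 0$ in $\h$ together with the $L^2$-boundedness of $\lambda_n w_1^n,\lambda_n w_4^n$ whenever a factor $\lambda_n F^n$ appears. This yields $\beta_1\|\Delta w_1^n\|^2+\beta_2\|\nabla w_4^n\|^2\to 0$ and hence $\|w^n\|_\h\to 0$, the sought contradiction.
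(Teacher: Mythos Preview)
Your argument is correct and follows essentially the same route as the paper: Gearhart--Pr\"uss, the dissipation identity \eqref{2-11} to kill $\nabla w_2$, $\nabla w_3$, $w_5$, and then a multiplier test against $(w_1,w_4)$ to recover $\|\Delta w_1\|_{L^2(\Omega_1)}$ and $\|\nabla w_4\|_{L^2(\Omega_2)}$. The paper packages this as a direct resolvent estimate in the weak formulation (testing $\langle\mathbb A w,\phi\rangle_{\mathcal H_\gamma'\times\h}$ with $\phi=(0,w_1,0,0,w_4)$), while you run a contradiction argument on the strong PDEs and cancel the interface integrals via \eqref{int-2}; the two computations are the same identity in different clothing.

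One technical point deserves a word: for $\gamma>0$ the domain only gives $w_1^n\in H^3(\Omega_1)$ (Corollary~\ref{cor_regularity}), so the integration by parts of $\int_{\Omega_1}\Delta^2 w_1^n\,\overline{w_1^n}$ and the use of \eqref{int-2} in the strong trace sense are not immediate. You can either (i) upgrade to $w^n\in D(\mathcal A^2)$ by a density argument when setting up the contradiction, or (ii) simply recast your multiplier step in the weak formulation \eqref{2-6}, where the interface cancellation is built into the inner product $\langle\cdot,\cdot\rangle_{H^{2,1}}$ and no extra regularity is needed---this is exactly what the paper does. Also, your workaround for $\gamma=0$ is unnecessary: $w^n\in D(\mathcal A)\subset\mathcal V$ already forces $(w_2^n,w_5^n)\in H^{2,1}$ and hence $w_2^n|_\Gamma=0$, so Poincar\'e applies directly.
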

\begin{proof}
	Let $\gamma\geq0$, $m, \rho>0$ and $\lambda\in \R$. For the proof, we use the characterization of exponential stability by Gearhart and Pr\"uss (see \cite{Pruss1984C_0semigroup}) which tells us that the semigroup is exponentially stable if  $ i\R\subset\rho(\mathcal{A}) $  and there is a constant $ C>0 $, which does not depend on $ \lambda\in \R $, such that
	\begin{align}\label{desigualdad}
		\left\|(i\lambda  -\mathcal{A})^{-1}\right\|_{\mathcal{L}(\h)}\leq C.
	\end{align}
As $i\R\subset\rho(\mathcal{A}) $ by Proposition~\ref{strongly stable}, we have to show \eqref{desigualdad}.
	To see this, let $w\in D(\mathcal{A})$, $\lambda\in\R$, and
	\begin{equation}\label{resolvent equation}
	(i\lambda  -\mathcal{A})w\eqqcolon f.
	\end{equation}
	To prove \eqref{desigualdad}, it is sufficient to establish that there is a constant $ C>0 $  such that for all $ \varepsilon>0 $ there exists $C_{\varepsilon}>0$ such that
	\begin{equation}\label{e7}
	\left\|w\right\|^2_{\h}\leq C_{\varepsilon}\left\|w\right\|_{\h}\left\|f\right\|_{\h}+ \varepsilon C\left\|w\right\|^2_{\h}. 
	\end{equation}
	Multiplying the resolvent equation \eqref{resolvent equation} by $w$, we obtain
	$$
	i\lambda\left\|w\right\|^2_{\h}-\left\langle \mathcal{A} w, w\right\rangle _{\h} =\left\langle f, w\right\rangle _{\h}.
	$$
	In consequence,
	$$
	-\Re\left\langle \mathcal{A} w, w\right\rangle _{\h}=\Re\left\langle f, w\right\rangle _{\h}\leq\big|\left\langle f, w\right\rangle _{\h}\big|.
	$$
	As we have seen in the proof of Theorem~\ref{!}, this means
	\begin{align}\label{dissip}
	\begin{split}
 \rho\|\nabla w_2\|^2_{L^2(\Omega_1)} &
	+ \beta_0\|\nabla w_3\|_{L^2(\Omega_1)}^2 + \beta_0\kappa \|w_3\|_{L^2(\Gamma)}^2
	+m\| w_5\|^2_{L^2(\Omega_2)}\\\leq&\left\|w\right\|_{\h}\left\|f\right\|_{\h}.
	\end{split}
	\end{align}
	From  Remark~\ref{2.2} and \eqref{resolvent equation} it follows that
	\begin{align}\label{equality1}
	\begin{split}
	(i\lambda w_1-w_{2}, i\lambda w_{4}-w_{5}) & =(f_1, f_{4}),\\
	\langle \mathbb{M}(i\lambda w - f), \phi \rangle_{\mathcal H_\gamma'\times \h } & =  \langle \mathbb Aw,\phi\rangle_{\mathcal H_\gamma'\times \h }.
	\end{split}
	\end{align}
	Choosing $ \phi=(0,w_{1},0,0,w_{4}) $, we get
	\begin{align*}
	&\rho_{1}\langle  i\lambda w_{2}-f_{2}, w_{1}\rangle_{L^2 (\Omega_{1})} + \gamma \langle \nabla (i\lambda w_{2}-f_{2}), \nabla w_{1}\rangle_{L^2 (\Omega_{1})} + \rho_{2}\langle i\lambda w_{5} - f_{5}, w_{4}\rangle_{L^2 (\Omega_{2})}\\
	=&-\|(w_1,w_4)\|_{H^{2,1}}^{2}
	 - \mu \langle w_3,\Delta w_{1}\rangle_{L^2(\Omega_1)}  - \rho\langle\nabla w_2,\nabla w_{1}\rangle_{L^2(\Omega_1)}  - m\langle w_5, w_{4}\rangle_{L^2(\Omega_2)}.
	\end{align*}
	Taking \eqref{equality1} into account, we get
	\begin{align*}
	&\rho_{1}\langle i\lambda w_{2}, w_{1}\rangle_{L^2 (\Omega_{1})} + \gamma \langle i\lambda\nabla w_{2}, \nabla w_{1}\rangle_{L^2 (\Omega_{1})} + \rho_{2}\langle i\lambda w_{5}, w_{4}\rangle_{L^2 (\Omega_{2})}\\
	=&-\rho_{1}\langle  w_{2},w_{2}+f_{1}\rangle_{L^2 (\Omega_{1})} - \gamma \langle \nabla  w_{2}, \nabla (w_{2}+f_{1})\rangle_{L^2 (\Omega_{1})} - \rho_{2}\langle w_{5},w_{5}+f_{4}\rangle_{L^2 (\Omega_{2})}\\
	=&-\rho_{1}\|w_{2}\|_{L^{2}(\Omega_{1})}^{2} - \gamma \|\nabla  w_{2}\|_{L^{2}(\Omega_{1})}^{2} - \rho_{2}\| w_{5}\|_{L^{2}(\Omega_{2})}^{2}\\
	&-\rho_{1}\langle  w_{2},f_{1}\rangle_{L^2 (\Omega_{1})} - \gamma \langle \nabla  w_{2}, \nabla f_{1}\rangle_{L^2 (\Omega_{1})} - \rho_{2}\langle  w_{5},f_{4}\rangle_{L^2 (\Omega_{2})}.
	\end{align*}
	Using the last equality, Poincar\'{e}'s inequality and inequality \eqref{dissip}, we obtain
	\begin{align*}
		\left\|(w_1, w_{4})\right\|^2_{H^{2,1}}=&\rho_{1}\|w_{2}\|_{L^{2}(\Omega_{1})}^{2} + \gamma \|\nabla  w_{2}\|_{L^{2}(\Omega_{1})}^{2} + \rho_{2}\| w_{5}\|_{L^{2}(\Omega_{2})}^{2}\\
		&+\rho_{1}\langle  w_{2},f_{1}\rangle_{L^2 (\Omega_{1})} + \gamma \langle \nabla  w_{2}, \nabla f_{1}\rangle_{L^2 (\Omega_{1})} + \rho_{2}\langle w_{5},f_{4}\rangle_{L^2 (\Omega_{2})}\\
		&+ \rho_{1}\langle f_{2}, w_{1}\rangle_{L^2 (\Omega_{1})}+ \gamma \langle \nabla f_{2}, \nabla w_{1}\rangle_{L^2 (\Omega_{1})} + \rho_{2}\langle f_{5}, w_{4}\rangle_{L^2 (\Omega_{2})}\\
		&-\mu\left\langle w_{3}, \Delta w_1\right\rangle_{L^2(\Omega_1)}
		-\rho\left\langle \nabla w_{2}, \nabla w_1\right\rangle_{L^2(\Omega_1)}-m\left\langle w_{5}, w_{4}\right\rangle_{L^2(\Omega_2)}\\
		\leq&   C\left\|w\right\|_{\h}\left\|f\right\|_{\h}
		+\mu\left|\left\langle w_{3}, \Delta w_1\right\rangle_{L^2(\Omega_1)}\right|\\
		&+\rho\left|\left\langle \nabla w_{2}, \nabla w_1\right\rangle_{L^2(\Omega_1)}\right|+m\left|\left\langle w_{5}, w_{4}\right\rangle_{L^2(\Omega_2)}\right|.
	\end{align*}	
	Now, we estimate the remaining terms on the right-hand side. Due to Young's and Poincar\'{e}'s inequality, we have
	\begin{align*}
\left|\left\langle w_{3}, \Delta w_1\right\rangle_{L^2(\Omega_1)}\right|\leq& C_{\varepsilon}\left\|w_{3}\right\|^2_{L^2(\Omega_1)}+\varepsilon\left\|\Delta w_1\right\|^2_{L^2(\Omega_1)}\\
\leq & C_{\epsilon} \left\|\nabla w_{3}\right\|^2_{L^2(\Omega_1)} + \epsilon C \|w\|_{\h}^{2}\\
\leq & C_{\epsilon} \left\|w\right\|_{\h}\left\|f\right\|_{\h} + \epsilon C \|w\|_{\h}^{2}
\end{align*}
for all $\varepsilon>0$. In the last step, we used again inequality~\eqref{dissip}. Similarly, we get
\begin{align*}
	\left|\left\langle \nabla w_{2}, \nabla w_1\right\rangle_{L^2(\Omega_1)}\right|
	&\leq C_{\varepsilon}\left\|w\right\|_{\h}\left\|f\right\|_{\h}+\varepsilon C\left\|w\right\|^2_{\h}
	\intertext{and}
	\left|\left\langle w_{5}, w_{4}\right\rangle_{L^2(\Omega_2)}\right|
	&\leq C_{\varepsilon}\left\|w\right\|_{\h}\left\|f\right\|_{\h}+\varepsilon C\left\|w\right\|^2_{\h}
\end{align*}
for all $ \epsilon>0 $. Altogether, we have shown inequality~\eqref{e7}. Therefore, the semigroup is exponentially stable.
\end{proof}

\begin{obser}
		Our plate-membrane system also has exponential stability when the plate is isothermal.
\end{obser}

We will now show that the thermoelastic plate-membrane system without rotational inertia has exponential stability if the membrane is damped ($m>0$) even without structural damping ($\rho=0$). Under this situation, the thermal effect on the plate is enough for exponential decay. For the proof of this result, the following lemma (Theorem 1.4.4 in \cite{liu1999semigroups}) will be useful.

 \begin{lem}\label{Lemma_01}
 Let $\Omega\subset\R^n$ be a bounded domain with $C^1$-boundary. Then, for any function $u\in H^1(\Omega)$  the following estimate holds:
  \begin{equation*}\label{Ineq_01}
   \Vert u\Vert_{L^2(\partial\Omega)} \leq C \Vert u\Vert_{H^1(\Omega)}^{1/2}\Vert u\Vert_{L^2(\Omega)}^{1/2}
  \end{equation*}
\end{lem}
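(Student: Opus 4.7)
The plan is to reduce everything to a divergence-theorem computation and then recognize the resulting product estimate as the geometric-mean bound claimed. By the density of $C^\infty(\overline\Omega)$ in $H^1(\Omega)$ (which holds for bounded $C^1$-domains) it suffices to prove the estimate for smooth~$u$ and then pass to the limit; all quantities appearing are continuous in the $H^1$-topology thanks to the usual trace theorem.

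The first concrete step is to construct a vector field $F\in C^1(\overline\Omega;\R^n)$ with $F\cdot\nu = 1$ on $\partial\Omega$. Since $\Omega$ has $C^1$-boundary, the outward normal $\nu$ extends to a $C^1$ vector field in a tubular neighbourhood of $\partial\Omega$; multiplying by a cut-off equal to $1$ near $\partial\Omega$ and supported in this neighbourhood, and extending by zero, gives such an $F$ (this also works from a partition-of-unity reduction to the half-space, where one simply takes $F=(0,\dots,0,1)$). The constant $C$ below depends on $\|F\|_{C^1(\overline\Omega)}$ and hence only on $\Omega$.

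Next, apply the divergence theorem to $|u|^2 F$:
\begin{equation*}
\int_{\partial\Omega} |u|^2\,dS = \int_{\partial\Omega} |u|^2\, F\cdot\nu\,dS = \int_\Omega \operatorname{div}(|u|^2 F)\,dx = \int_\Omega \bigl(2\Re(\overline u\,\nabla u)\cdot F + |u|^2\operatorname{div} F\bigr)\,dx.
\end{equation*}
The Cauchy--Schwarz inequality on both terms then yields
\begin{equation*}
\|u\|_{L^2(\partial\Omega)}^2 \le C_1 \|u\|_{L^2(\Omega)}\|\nabla u\|_{L^2(\Omega)} + C_2 \|u\|_{L^2(\Omega)}^2,
\end{equation*}
with constants $C_1,C_2$ depending only on $F$. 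Since $\|u\|_{L^2(\Omega)}\le \|u\|_{H^1(\Omega)}$ and $\|\nabla u\|_{L^2(\Omega)}\le \|u\|_{H^1(\Omega)}$, both terms on the right are bounded by $C\|u\|_{L^2(\Omega)}\|u\|_{H^1(\Omega)}$, and extracting a square root gives the asserted inequality.

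The only genuinely non-routine step is the construction of the vector field $F$ (equivalently, the partition-of-unity/local-chart reduction), which is where the $C^1$-regularity of $\partial\Omega$ is really used; once $F$ is in hand the argument is a one-line application of the divergence theorem followed by Cauchy--Schwarz.
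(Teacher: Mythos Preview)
The paper does not give its own proof of this lemma; it simply quotes it as Theorem~1.4.4 in \cite{liu1999semigroups}. Your argument is the standard one and is correct.

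One small technical remark: for a domain that is merely $C^1$, the outward unit normal $\nu$ is only continuous, so the ``extend $\nu$ to a $C^1$ field in a tubular neighbourhood'' description is not quite right as stated (a genuine tubular neighbourhood with $C^1$ normal field needs $C^2$ regularity). The partition-of-unity route you also mention is the clean fix: in each local graph chart $\{x_n>\phi(x')\}$ with $\phi\in C^1$, the constant field $e_n$ is smooth and satisfies $e_n\cdot\nu=(1+|\nabla\phi|^2)^{-1/2}\ge c>0$, and gluing by a smooth partition of unity produces a smooth $F$ on $\overline\Omega$ with $F\cdot\nu\ge c>0$ on $\partial\Omega$ (rather than $=1$). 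This gives
\[
c\,\|u\|_{L^2(\partial\Omega)}^2 \le \int_{\partial\Omega}|u|^2\,F\cdot\nu\,dS = \int_\Omega \operatorname{div}(|u|^2 F)\,dx,
\]
and the rest of your computation is unchanged.
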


Below, we will also apply this lemma to $\partial_\nu u$, $\Delta u$, and $\partial_\nu\Delta u$ with  $u$ being sufficiently smooth.

\begin{teor}\label{Main_Th_section8}
If $\gamma=0$, $\rho=0$ and $m>0$, then the semigroup $\left(\mathcal{S}(t)\right)_{t\geq0}$ generated by the operator $\mathcal A$ is exponentially stable.
\end{teor}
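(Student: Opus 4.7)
The plan is to apply the Gearhart--Pr\"uss theorem, following the strategy of the proof of Theorem~\ref{exp stab} but with the r\^ole of the structural damping on the plate replaced by the thermoelastic coupling. Since $m>0$, Proposition~\ref{strongly stable} yields $i\R\subset\rho(\mathcal A)$, so the task reduces to the uniform resolvent bound $\sup_{\lambda\in\R}\|(i\lambda-\mathcal A)^{-1}\|_{\mathcal L(\mathcal H_0)}<\infty$. Writing $f\coloneqq(i\lambda-\mathcal A)w$ for $w\in D(\mathcal A)$ and $\lambda\in\R$, I aim to prove
\[ \|w\|_{\mathcal H_0}^{2}\le C_{\epsilon}\,\|w\|_{\mathcal H_0}\|f\|_{\mathcal H_0}+\epsilon\,C\,\|w\|_{\mathcal H_0}^{2}\qquad(\epsilon>0), \]
after which absorption of the $\epsilon$-term delivers resolvent boundedness.

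Taking the real part of $\langle(i\lambda-\mathcal A)w,w\rangle_{\mathcal H_0}$ and specialising the dissipativity identity from Theorem~\ref{!} to $\gamma=\rho=0$ yields
\[ \beta_0\|\nabla w_3\|_{L^2(\Omega_1)}^{2}+\beta_0\kappa\|w_3\|_{L^2(\Gamma)}^{2}+m\|w_5\|_{L^2(\Omega_2)}^{2}\le\|w\|_{\mathcal H_0}\|f\|_{\mathcal H_0}. \]
Since $w_3=0$ on $I$, Poincar\'e upgrades this to $\|w_3\|_{H^1(\Omega_1)}^{2}\lesssim\|w\|_{\mathcal H_0}\|f\|_{\mathcal H_0}$, and the bound on $\rho_2\|w_5\|_{L^2(\Omega_2)}^{2}$ is immediate. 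What remains to be controlled is $\rho_1\|w_2\|_{L^2(\Omega_1)}^{2}$, $\beta_1\|\Delta w_1\|_{L^2(\Omega_1)}^{2}$ and $\beta_2\|\nabla w_4\|_{L^2(\Omega_2)}^{2}$.

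The next step is to insert the plate--membrane multiplier $\phi=(0,w_1,0,0,w_4)^{\top}$ from Theorem~\ref{exp stab} into the weak identity $\langle\mathbb{A}w,\phi\rangle=\langle\mathbb{M}(i\lambda w-f),\phi\rangle$. Integration by parts together with $w_2=i\lambda w_1-f_1$ and $w_5=i\lambda w_4-f_4$ produces an identity of the form
\[ \|(w_1,w_4)\|_{H^{2,1}}^{2}=\rho_1\|w_2\|_{L^2(\Omega_1)}^{2}+\rho_2\|w_5\|_{L^2(\Omega_2)}^{2}+R, \]
where $R$ collects coupling and data terms in $\mu w_3$, $m w_5$ and $f$, controlled by Cauchy--Schwarz, Young's inequality and the dissipation. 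The novel difficulty, absent from Theorem~\ref{exp stab}, is the control of $\rho_1\|w_2\|_{L^2(\Omega_1)}^{2}$ on the right-hand side, since with $\rho=0$ the direct estimate $\rho\|\nabla w_2\|^{2}\le\|w\|\|f\|$ is no longer available. Here the thermoelastic coupling takes over: testing the third resolvent equation
\[ \mu\Delta w_2=i\lambda\rho_0 w_3-\beta_0\Delta w_3-\rho_0 f_3 \]
against $w_2$ and integrating by parts---the $\Delta w_2$-boundary terms vanish because $(w_2,w_5)^{\top}\in H^{2,1}$---recovers $\mu\|\nabla w_2\|_{L^2(\Omega_1)}^{2}$, and Poincar\'e then gives $\|w_2\|_{L^2(\Omega_1)}^{2}$. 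The dangerous factor $\lambda$ on the right-hand side is neutralised by further testing the second resolvent equation $i\lambda\rho_1 w_2=-\beta_1\Delta^2 w_1-\mu\Delta w_3+\rho_1 f_2$ against $w_3$, which expresses $i\lambda\langle w_2,w_3\rangle_{L^2(\Omega_1)}$ through quantities already under control by the dissipation together with the plate energy $\|\Delta w_1\|^{2}$.

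The main obstacle will be the treatment of the boundary and interface traces generated along the way. Besides a term $\beta_0\langle\partial_\nu w_3,w_5\rangle_{L^2(I)}$ arising from integration by parts against $\Delta w_3$, the plate multiplier leaves a contribution $\int_I\partial_\nu\Delta w_1\,\overline{w_1}\,dS$, which is rewritten through the transmission identity $\beta_1\partial_\nu\Delta w_1=-\beta_2\partial_\nu w_4-\mu\partial_\nu w_3$ from~\eqref{int-2}, reducing it to traces of $\partial_\nu w_3$ and $\partial_\nu w_4$ on $I$. To absorb all such boundary contributions into $\epsilon\|w\|_{\mathcal H_0}^{2}$ I would invoke Lemma~\ref{Lemma_01}---including its extension to $\partial_\nu u$, $\Delta u$ and $\partial_\nu\Delta u$ indicated after its statement---trading $\|\cdot\|_{L^2(\partial\Omega_1)}$ for $\|\cdot\|_{L^2(\Omega_1)}^{1/2}\|\cdot\|_{H^1(\Omega_1)}^{1/2}$ and closing by Young's inequality with small $\epsilon$; the requisite smoothness is supplied by Theorem~\ref{Th regularity}(iii). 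Assembling all contributions then produces the desired inequality and Gearhart--Pr\"uss concludes.
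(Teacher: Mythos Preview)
Your outline diverges from the paper's proof in a substantial way: the paper does \emph{not} establish a direct estimate of the form $\|w\|^2\le C_\epsilon\|w\|\|f\|+\epsilon C\|w\|^2$ but argues by contradiction, assuming $\|w^n\|_{\mathcal H_0}=1$, $|\lambda_n|\to\infty$, $f^n\to 0$, and then showing successively $w_5^n\to0$, $w_3^n\to0$, $\nabla w_4^n\to0$, $\Delta w_1^n\to0$, $w_2^n\to0$. The engine is not a single multiplier identity but a chain of elliptic regularity estimates (Theorem~\ref{Cor Regu boundaryProbNOnHom}, Theorem~\ref{Th regularity}(iii)) together with repeated use of the trace interpolation Lemma~\ref{Lemma_01}, balanced by carefully distributed powers of $|\lambda_n|$ such as $\|w_1^n\|_{H^4}/|\lambda_n|\le C$, $\|w_1^n\|_{H^3}/|\lambda_n|^{1/2}\le C$, $\|w_3^n\|_{H^2}/|\lambda_n|\le C$, $\|w_2^n\|_{H^1}/|\lambda_n|^{1/2}\le C$.

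The reason the paper goes this indirect route is exactly where your plan has a gap. You write that testing the plate equation against $w_3$ ``expresses $i\lambda\langle w_2,w_3\rangle_{L^2(\Omega_1)}$ through quantities already under control by the dissipation together with the plate energy $\|\Delta w_1\|^{2}$''. This is not so: that testing produces $\beta_1\langle\Delta^2 w_1,w_3\rangle_{L^2(\Omega_1)}$, which after one integration by parts becomes $-\beta_1\langle\nabla\Delta w_1,\nabla w_3\rangle_{L^2(\Omega_1)}$ plus boundary terms on $\Gamma$. The dissipation only gives $\|\nabla w_3\|^2\lesssim\|w\|\|f\|$, and $\|\nabla\Delta w_1\|$ is \emph{not} controlled by $\|\Delta w_1\|$; it requires the $H^3$-norm of $w_1$, which (as the paper's own computations show) is of order $|\lambda|^{1/2}\|w\|$. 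The resulting product is $\sim|\lambda|^{1/2}\|w\|^{3/2}\|f\|^{1/2}$, and Young's inequality then leaves a factor $|\lambda|^2$ on the $\|f\|^2$ side---a polynomial, not uniform, resolvent bound. Two integrations by parts instead lands you on $\langle\Delta w_1,\Delta w_3\rangle$, which is no better since $\|\Delta w_3\|$ involves $|\lambda|\|\Delta w_1\|$ via \eqref{6equa} and \eqref{2equa}. The same obstruction hits the trace $\int_\Gamma\partial_\nu\Delta w_1\,\overline{w_3}$: Lemma~\ref{Lemma_01} helps, but only after you have the $H^4/H^3$ control of $w_1$ that itself costs powers of $|\lambda|$. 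The paper absorbs all of this by working sequentially and showing that the relevant combinations, divided by the right power of $|\lambda_n|$, tend to zero; a direct multiplier identity of the type you propose does not close.
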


\begin{proof}
Again we use the Gearhart--Pr\"uss criterion, so we study  the resolvent in $\mathcal H_0$ (note $\gamma=0$) on the imaginary axis.  From Proposition \ref{strongly stable}, we have $i\R\subset\rho(\mathcal A )$.
Let us suppose \eqref{desigualdad} is not true. Then, there exists a sequence $\left(\lambda_n\right)_{n\in\N}\subset\R$ and a sequence $\left(w^n\right)_{n\in\N}\subset D(\mathcal A )$ with $\left\|w^n\right\|_{\mathcal{H}_0}=1$ such that
\begin{equation}\label{ec2}
\left\|(i\lambda_n -\mathcal A )w^n\right\|_{\mathcal{H}_0}\to 0 \quad (n\to\infty).
\end{equation}
As the resolvent is holomorphic and therefore bounded on compact subsets of the imaginary axis, we see that the sequence $(\lambda_n)_{n\in\N}$ is unbounded, so we may assume $\lim_{n\rightarrow\infty}|\lambda_n|=\infty$.
For  $f^n := (i\lambda_n -\mathcal A )w^n$, we obtain
\begin{align}
i\lambda_nw_1^n-w_2^n&=f_1^n,\label{ec3}\\
i\lambda_n\rho_1w_2^n+\beta_1\Delta^2w_1^n+\mu\Delta w_3^n&=\rho_1f_2^n,\label{ec5}\\
i\lambda_n\rho_0w_3^n-\mu\Delta w_2^n-\beta_0\Delta w_3^n&=\rho_0f_3^n,\label{ec7}\\
i\lambda_nw_4^n-w_5^n&=f_4^n,\label{ec4}\\
i\lambda_n\rho_2w_5^n-\beta_2\Delta w_4^n+mw_5^n&=\rho_2f_5^n,\label{ec6}
\end{align}
and since
\begin{align}\label{ec8}
\begin{split}
\left\|(i\lambda_n -\mathcal A )w^n\right\|_{\mathcal{H}_0}^2&=\left\|f^n\right\|^2_{\mathcal{H}_0}=\beta_1\left\|\Delta f_1^n\right\|^2_{L^2(\Omega_1)} +\beta_2\left\|\nabla f_4^n\right\|^2_{L^2(\Omega_2) }\\
&+\rho_1\left\|f_2^n\right\|^2_{L^2(\Omega_1)}+\rho_2 \left\|f_5^n\right\|^2_{L^2(\Omega_2)}+\rho_0\left\|f_3^n\right\|^2_{L^2(\Omega_1)},
\end{split}
\end{align}
we obtain from \eqref{ec2}-\eqref{ec8}
\begin{align}
i\lambda_n\Delta w_1^n-\Delta w_2^n&\to 0 \ \textup{ in } \ L^2(\Omega_1),\label{ec9}\\
i\lambda_n\rho_1w_2^n+\beta_1\Delta^2w_1^n+\mu\Delta w_3^n& \to 0 \ \textup{ in } \ L^2(\Omega_1),\label{ec11}\\
i\lambda_n\rho_0w_3^n-\mu\Delta w_2^n-\beta_0\Delta w_3^n&\to 0  \ \textup{ in } \ L^2(\Omega_1),\label{ec13}\\
i\lambda_n\nabla w_4^n-\nabla w_5^n&\to 0  \ \textup{ in } \ L^2(\Omega_2),\label{ec10}\\
i\lambda_n\rho_2w_5^n-\beta_2\Delta w_4^n+mw_5^n&\to 0  \ \textup{ in } \ L^2(\Omega_2) \label{ec12}
\end{align}
for $n\to\infty$. From \eqref{2-11} it follows that
\begin{align*}
\Re\left\langle (i\lambda_n -\mathcal A )w^n, w^n\right\rangle _{\mathcal{H}_0}&=\Re\left[i\lambda_n\left\|w^n\right\|^2_{\mathcal{H}_0}-\left\langle \mathcal A  w^n, w^n\right\rangle _{\mathcal{H}_0}\right]\\
&=-\Re\left\langle \mathcal A  w^n, w^n\right\rangle _{\mathcal{H}_0}\\
&=m\left\|w_5^n\right\|^2_{L^2(\Omega_2)}+\beta_0\|\nabla w_3^n\|_{L^2(\Omega_1)}^2 + \beta_0\kappa \|w_3^n\|_{L^2(\Gamma)}^2.
\end{align*}
As $(i\lambda_n -\mathcal A )w^n$ converges to zero in $\mathcal{H}_0$ and $\left(w^n\right)_{n\in\N}$ is a bounded sequence in $\mathcal{H}_0$, the right-hand side of the last equality tends to zero. Therefore,
\begin{equation}\label{ec14}
 w_5^n\to 0\;\text{ in } L^2(\Omega_2)\quad\text{ and } w_3^n\to 0\;\text{ in } H^1(\Omega_1).
\end{equation}
Note also that the sequences  $(w_1^n)_{n\in\N}\subset H^2(\Omega_1)$, $(w_2^n)_{n\in\N}\subset L^2(\Omega_1)$, and $(w_4^n)_{n\in\N}\subset H^1(\Omega_1)$ are bounded because of $\|w^n\|_{\mathcal H_0}=1$. As $f^n\to 0$ in $\mathcal H_0$, we obtain $f_1^n\to 0$ in $H^2(\Omega_1)$ and $f_4^n\to 0$ in $H^1(\Omega_2)$ which yields
$$
\pm iw_1^n-\frac{w_2^n}{|\lambda_n|}\to 0 \ \textup{ in } \ H^2(\Omega_1) \ \textup{ and } \ \pm i w_4^n-\frac{w_5^n}{|\lambda_n|}\to 0 \ \textup{ in } \ H^1(\Omega_2).
$$
As $\left\|w_1^n\right\|_{H^2(\Omega_1)}\leq C $ and $\left\|w_4^n\right\|_{H^1(\Omega_2)}\leq C $, the sequence $\left(\frac{w_2^n}{|\lambda_n|}\right)_{n\in\N}$ is bounded in $H^2(\Omega_1)$, and the sequence $\left(\frac{w_5^n}{|\lambda_n|}\right)_{n\in\N}$ is bounded in $H^1(\Omega_2)$.

The convergences \eqref{ec12}, \eqref{ec13} and \eqref{ec14} imply
\begin{equation}\label{ec15}
\frac{1}{|\lambda_n|}\Delta w_4^n\to 0  \ \textup{ in } \ L^2(\Omega_2)
\end{equation}
and
\begin{equation}\label{ec16}
\frac{\mu}{|\lambda_n|}\Delta w_2^n+\frac{\beta_0}{|\lambda_n|}\Delta w_3^n\to 0  \ \textup{ in } \ L^2(\Omega_1).
\end{equation}
As $\left(\nabla w_4^n\right)_{n\in\N}$ is a bounded sequence in $L^2(\Omega_2) $, we get from \eqref{ec10} that $$\left\langle i\lambda_n\nabla w_4^n-\nabla w_5^n, \nabla w_4^n\right\rangle_{L^2(\Omega_2) }\to 0.$$
Using integration by parts, we have
\begin{align}\label{ec17}
\begin{split}
&\left\langle i\lambda_n\nabla w_4^n-\nabla w_5^n, \nabla w_4^n\right\rangle_{L^2(\Omega_2)}\\
&=i\lambda_n\left\|\nabla w_4^n\right\|^2_{L^2(\Omega_2)}+\left\langle w_5^n, \Delta w_4^n\right\rangle_{L^2(\Omega_2)}+\left\langle w_5^n, \partial_{\nu}w_4^n\right\rangle_{L^2(I)}.
\end{split}
\end{align}
With the interpolation inequality, we obtain
\begin{align*}
\left\|w_2^n\right\|_{H^1(\Omega_1)}&\leq C \left\|w_2^n\right\|^{1/2}_{H^2(\Omega_1)}\left\|w_2^n\right\|^{1/2}_{L^2(\Omega_1)}\\
&=C \left\|i\lambda_nw_1^n-f_1^n\right\|^{1/2}_{H^2(\Omega_1)}\left\|w_2^n\right\|^{1/2}_{L^2(\Omega_1)}\\
&\leq C \left(|\lambda_n|^{1/2}\left\|w_1^n\right\|^{1/2}_{H^2(\Omega_1)} +\left\|f_1^n\right\|^{1/2}_{H^2(\Omega_1)}\right)\left\|w_2^n\right\|^{1/2}_{L^2(\Omega_1)}
\end{align*}
and thus
\begin{equation}\label{tion1}
\frac{\left\|w_2^n\right\|_{H^1(\Omega_1)}}{|\lambda_n|^{1/2}}\leq C \Big(\left\|w_1^n\right\|^{1/2}_{H^2(\Omega_1)} +\frac{\left\|f_1^n\right\|^{1/2}_{H^2(\Omega_1)}}{|\lambda_n|^{1/2}}\Big)\left\|w_2^n\right\|^{1/2}_{L^2(\Omega_1)}\leq C .
\end{equation}
By trace theorem, Lemma \ref{Lemma_01} \textcolor{black}{applied to $\partial_\nu w_4^n$} and $w_5^n=w_2^n$ on $I$, we get
\begin{align*}
\left|\left\langle w_5^n, \partial_{\nu}w_4^n\right\rangle_{L^2(I)}\right|&\leq\left\|w_5^n\right\|_{L^2(I)}\left\|\partial_{\nu}w_4^n\right\|_{L^2(I)}\nonumber\\
&\leq C \left\|w_2^n\right\|_{H^1(\Omega_1)}\left\|w_4^n\right\|^{1/2}_{H^2(\Omega_2)}\left\|w_4^n\right\|^{1/2}_{H^1(\Omega_2)}
\end{align*}
and therefore
\begin{align}\label{ec18}
\left|\frac{1}{\lambda_n}\left\langle w_5^n, \partial_{\nu}w_4^n\right\rangle_{L^2(I)}\right|&\leq C \frac{\left\|w_2^n\right\|_{H^1(\Omega_1)}}{|\lambda_n|^{1/2}} \frac{\left\|w_4^n\right\|^{1/2}_{H^2(\Omega_2)}}{|\lambda_n|^{1/2}}\left\|w_4^n \right\|^{1/2}_{H^1(\Omega_2)}\nonumber\\
&\leq C \frac{\left\|w_4^n\right\|^{1/2}_{H^2(\Omega_2)}}{|\lambda_n|^{1/2}}.
\end{align}
By \eqref{ec6} we have
$$
\Delta w_4^n=\frac{1}{\beta_2}(i\lambda_n\rho_2w_5^n+mw_5^n-\rho_2f_5^n)
$$
and as $w_4^n=w_1^n$ on $I$,  elliptic regularity for the Dirichlet Laplacian
(Theorem~\ref{Cor Regu boundaryProbNOnHom})  implies
\begin{align*}\label{ec22}
\left\|w_4^n\right\|_{H^2(\Omega_2)}&\leq C \left(\left\|i\lambda_n\rho_2w_5^n+mw_5^n-\rho_2f_5^n \right\|_{L^2(\Omega_2)}+\left\|w_1^n\right\|_{H^{3/2}(I)}\right)\nonumber\\
&\leq C \left(|\lambda_n|\left\|w_5^n\right\|_{L^2(\Omega_2)}+ \left\|w_5^n\right\|_{L^2(\Omega_2)}+\left\|f_5^n\right\|_{L^2(\Omega_2)}
+\left\|w_1^n\right\|_{H^2(\Omega_1)}\right).
\end{align*}
Thus,
$$
\frac{\left\|w_4^n\right\|_{H^2(\Omega_2)}}{|\lambda_n|}\leq C \left(\left\|w_5^n\right\|_{L^2(\Omega_2)}+\frac{\left\|w_5^n \right\|_{L^2(\Omega_2)}}{|\lambda_n|}+\frac{\left\|f_5^n\right\|_{L^2(\Omega_2)}}{|\lambda_n|} +\frac{\left\|w_1^n\right\|_{H^2(\Omega_1)}}{|\lambda_n|}\right).
$$
Because of $\lim_{n\rightarrow\infty}\left\|w_5^n\right\|_{L^2(\Omega_2)}=0$, $\lim_{n\rightarrow\infty}\left\|f_5^n\right\|_{L^2(\Omega_2)}=0$ and $\left\|w_1^n\right\|_{H^2(\Omega_1)}\leq C$, we get
\begin{equation}\label{ec19}
\frac{\left\|w_4^n\right\|_{H^2(\Omega_2)}}{|\lambda_n|}\to 0 \quad (n\to\infty).
\end{equation}
From \eqref{ec15} and \eqref{ec17}, \eqref{ec18} and \eqref{ec19}, we obtain
\begin{equation}\label{ec20}
\nabla w_4^n\to 0  \ \textup{ in } \ L^2(\Omega_2).
\end{equation}

\noindent Dividing \eqref{ec9} by $|\lambda_n|$, we have
$$
\pm i\Delta w_1^n-\frac{1}{|\lambda_n|}\Delta w_2^n\to 0  \ \textup{ in } \ L^2(\Omega_1)
$$
and given that $\left\|\Delta w_1^n\right\|_{L^2(\Omega_1)}\leq C$, then $\left(\frac{1}{|\lambda_n|}\Delta w_2^n\right)_{n\in\N}$ is a bounded sequence in $L^2(\Omega_1)$. Consequently,
the limit \eqref{ec16} imply that $\left(\frac{1}{|\lambda_n|}\Delta w_3^n\right)_{n\in\N}$ is a bounded sequence in $L^2(\Omega_1)$. From \eqref{ec11} it follows that
$$
\pm i\rho_1w_2^n+\frac{\beta_1}{|\lambda_n|}\Delta^2w_1^n+\frac{\mu}{|\lambda_n|}\Delta w_3^n\to 0  \ \textup{ in } \ L^2(\Omega_1).
$$
Hence,
\begin{equation}\label{ec21}
\frac{1}{|\lambda_n|}\left\|\Delta^2w_1^n\right\|_{L^2(\Omega_1)}\leq C .
\end{equation}
\textcolor{black}{Due to \eqref{ec5} and $w^n\in D(\mathcal A )$ (see Theorem \ref{Th regularity}), $w_1^n$ satisfies the problem
$$
\begin{cases}
(\eta_0+\Delta^2)w_1^n=\eta_0w_1^n+\beta_1^{-1}z^n \ \textup{ in } \ \Omega_1,\\
w_1^n=0, \partial_{\nu}w_1^n=0 \ \textup{ on } \ \Gamma,\\
\partial_{\nu}w_1^n=0, \partial_{\nu}(\Delta w_1^n)=\beta_1^{-1}(-\beta_2\partial_{\nu}w_4^n-\partial_{\nu}w_3^n) \ \textup{ on } \ I,
\end{cases}
$$
with
$$
z^n:=\lambda^2_n\rho_1w_1^n-\mu\Delta w_3^n+i\lambda_n\rho_1f_1^n+\rho_1f_2^n.
$$
Then, Theorem \ref{Cor Regu boundaryProbNOnHom} implies
\begin{align}\label{ec23}
&\left\|w_1^n\right\|_{H^4(\Omega_1)}\nonumber\\
&\leq C \left(\left\|\eta_0w_1^n+\Delta^2w_1^n\right\|_{L^2(\Omega_1)}+ \left\|\beta_1^{-1}(-\beta_2\partial_{\nu}w_4^n-\mu\partial_{\nu}w_3^n)\right\|_{H^{1/2}(I)}\right)\nonumber\\
&\leq C \left(\left\|w_1^n\right\|_{L^2(\Omega_1)} +\left\|\Delta^2w_1^n\right\|_{L^2(\Omega_1)} +\left\|\partial_{\nu}w_4^n\right\|_{H^{1/2}(I)}+\left\|\partial_{\nu}w_3^n\right\|_{H^{1/2}(I)}\right).
\end{align}
}
By the trace theorem and \eqref{ec19}, we have
\begin{equation}\label{ec24}
\textcolor{black}{\frac{\left\|\partial_{\nu}w_4^n\right\|_{H^{1/2}(I)}}{|\lambda_n|}\leq C \frac{1}{|\lambda_n|}\left\|w_4^n\right\|_{H^2(\Omega_2)} \to 0}.
\end{equation}
Note that $w_3^n$ is a solution to the following problem
$$
\begin{cases}
\Delta w_3^n=\frac{\rho_0}{\beta_0}(i\lambda_nw_3^n-\rho_0^{-1}\mu\Delta w_2^n-f_3^n)=:h^*_n\in L^2(\Omega_1),\\
\partial_{\nu}w_3^n+\kappa w_3^n=0 \ \textup{ on } \ \Gamma,\\
w_3^n=0 \ \textup{ on } \ I.
\end{cases}
$$
In consequence,
\begin{align*}
\left\|w_3^n\right\|_{H^2(\Omega_1)}&\leq C\left\|\eta_0w_3^n-h^*_n\right\|_{L^2(\Omega_1)}\\
&\leq C \Big(\left\|w_3^n\right\|_{L^2(\Omega_1)}+|\lambda_n|\left\|w_3^n\right\|_{L^2(\Omega_1)}\\
&\quad+|\lambda_n|\left\|\Delta w_1^n\right\|_{L^2(\Omega_1)}+\left\|\Delta f_1^n\right\|_{L^2(\Omega_1)}+\left\|f_3^n\right\|_{L^2(\Omega_1)}\Big),
\end{align*}
here we have used equality \eqref{ec3}. Thus,
\begin{equation}\label{tion2}
\frac{1}{|\lambda_n|}\left\|w_3^n\right\|_{H^2(\Omega_1)}\leq C .
\end{equation}
Consequently,
\begin{equation}\label{ec25}
\frac{\left\|\partial_{\nu}w_3^n\right\|_{H^{1/2}(I)}}{|\lambda_n|}\leq C \frac{1}{|\lambda_n|}\left\|w_3^n\right\|_{H^2(\Omega_1)}\leq C .
\end{equation}
The estimates \eqref{ec21}, \eqref{ec23}, \eqref{ec24} and \eqref{ec25} imply $\frac{1}{|\lambda_n|}\left\|w_1^n\right\|_{H^4(\Omega_1)}\leq C $. By interpolation inequality
$$
\frac{1}{|\lambda_n|^{1/2}}\left\|w_1^n\right\|_{H^3(\Omega_1)}\leq C \frac{\left\|w_1^n\right\|^{1/2}_{H^4(\Omega_1)}}{|\lambda_n|^{1/2}}\left\|w_1^n\right\|^{1/2}_{H^2(\Omega_1)}.
$$
Therefore, $\left(\frac{1}{|\lambda_n|^{1/2}}w_1^n\right)_{n\in\N}$ is a bounded sequence in $H^3(\Omega_1)$. From \eqref{ec9} and \eqref{ec16} it follows that
$$
\pm i\mu\Delta w_1^n+\frac{\beta_0}{|\lambda_n|}\Delta w_3^n\to 0 \ \textup{ in } \ L^2(\Omega_1)
$$
and therefore
$$
\left\langle \pm i\mu\Delta w_1^n+\frac{\beta_0}{|\lambda_n|}\Delta w_3^n, \Delta w_1^n\right\rangle_{L^2(\Omega_1)}\to 0 .
$$
So,
\begin{equation}\label{ec26}
\pm i\mu\left\|\Delta w_1^n\right\|^2_{L^2(\Omega_1)}+\frac{\beta_0}{|\lambda_n|}\left(\Delta w_3^n, \Delta w_1^n\right\rangle_{L^2(\Omega_1)}\to 0 .
\end{equation}
Using integration by parts,
\begin{align}\label{ec27}
\begin{split}
&\left\langle \Delta w_3^n, \Delta w_1^n\right\rangle_{L^2(\Omega_1)}\\
&=\left\langle w_3^n, \Delta^2w_1^n\right\rangle_{L^2(\Omega_1)}-\left\langle w_3^n, \partial_{\nu}\Delta w_1^n\right\rangle_{L^2(\Gamma)}+\left\langle \partial_{\nu}w_3^n, \Delta w_1^n\right\rangle_{L^2(\partial\Omega_1)}.
\end{split}
\end{align}
By the trace theorem and Lemma \ref{Lemma_01} applied to $\partial_\nu\Delta w_1^n$, we have
\begin{align*}
\left|\left\langle w_3^n, \partial_{\nu}\Delta w_1^n\right\rangle_{L^2(\Gamma)}\right|&\leq\left\|w_3^n\right\|_{L^2(\Gamma)}\left\|\partial_{\nu}\Delta w_1^n\right\|_{L^2(\Gamma)}\\
&\leq C \left\|w_3^n\right\|_{H^1(\Omega_1)}\left\|w_1^n\right\|^{1/2}_{H^4(\Omega_1)}\left\|w_1^n\right\|^{1/2}_{H^3(\Omega_1)}.
\end{align*}
Then,
\begin{align*}
\left|\frac{1}{\lambda_n}\left\langle w_3^n, \partial_{\nu}\Delta w_1^n\right\rangle_{L^2(\Gamma)}\right|&\leq C \frac{\left\|w_3^n\right\|_{H^1(\Omega_1)}}{|\lambda_n|^{1/4}} \frac{\left\|w_1^n\right\|^{1/2}_{H^4(\Omega_1)}}{|\lambda_n|^{1/2}} \frac{\left\|w_1^n\right\|^{1/2}_{H^3(\Omega_1)}}{|\lambda_n|^{1/4}}\\
&\leq C \frac{\left\|w_3^n\right\|_{H^1(\Omega_1)}}{|\lambda_n|^{1/4}}.
\end{align*}
Therefore,
\begin{equation}\label{ec28}
\frac{1}{|\lambda_n|}\left\langle w_3^n, \partial_{\nu}\Delta w_1^n\right\rangle_{L^2(\Gamma)}\to 0.
\end{equation}
Lemma  \ref{Lemma_01} implies
\begin{align*}
\left|\left\langle \partial_{\nu}w_3^n, \Delta w_1^n\right\rangle_{L^2(\partial\Omega_1)}\right|&\leq\left\|\partial_{\nu}w_3^n\right\|_{L^2(\partial\Omega_1)}\left\|\Delta w_1^n\right\|_{L^2(\partial\Omega_1)}\\
&\leq C \left\|w_3^n\right\|^{1/2}_{H^2(\Omega_1)}\left\|w_3^n\right\|^{1/2}_{H^1(\Omega_1)} \left\|w_1^n\right\|^{1/2}_{H^3(\Omega_1)}\left\|w_1^n\right\|^{1/2}_{H^2(\Omega_1)}.
\end{align*}
By \eqref{tion2} we have
\begin{align*}
\left|\frac{1}{\lambda_n}\left\langle \partial_{\nu}w_3^n,  \Delta w_1^n\right\rangle_{L^2(\partial\Omega_1)}\right|&\leq C \frac{\left\|w_3^n\right\|^{1/2}_{H^2(\Omega_1)}}{|\lambda_n|^{1/2}} \left\|w_3^n\right\|^{1/2}_{H^1(\Omega_1)}\frac{\left\|w_1^n\right\|^{1/2}_{H^3(\Omega_1)}}{|\lambda_n|^{1/4}} \frac{\left\|w_1^n\right\|^{1/2}_{H^2(\Omega_1)}}{|\lambda_n|^{1/4}}\\
&\leq C \left\|w_3^n\right\|^{1/2}_{H^1(\Omega_1)}.
\end{align*}
Hence,
\begin{equation}\label{ec29}
\frac{1}{|\lambda_n|}\left\langle \partial_{\nu}w_3^n, \Delta w_1^n\right\rangle_{L^2(\partial\Omega_1)}\to 0 .
\end{equation}
From \eqref{ec14}, \eqref{ec21} and \eqref{ec26}-\eqref{ec29}, it follows that
\begin{equation}\label{ec30}
\Delta w_1^n\to 0  \ \textup{ in } \ L^2(\Omega_1).
\end{equation}

The limit \eqref{ec11} implies $\left\langle i\lambda_n\rho_1w_2^n+\beta_1\Delta^2w_1^n+\mu\Delta w_3^n, w_2^n\right\rangle_{L^2(\Omega_1)}\to 0 $. Using integration by parts, we have
\begin{align}\label{ec31}
\begin{split}
&\left\langle i\lambda_n\rho_1w_2^n+\beta_1\Delta^2w_1^n+\mu\Delta w_3^n, w_2^n\right\rangle_{L^2(\Omega_1)}=i\lambda_n\rho_1\left\|w_2^n\right\|^2_{L^2(\Omega_1)}\\
&+\beta_1\left\langle \Delta w_1^n, \Delta w_2^n\right\rangle_{L^2(\Omega_1)}+\beta_1\left\langle \partial_{\nu}\Delta w_1^n, w_2^n\right\rangle_{L^2(I)}+\mu\left\langle \Delta w_3, w_2\right\rangle_{L^2(\Omega_1)}.
\end{split}
\end{align}
From \eqref{ec9} and \eqref{ec30}, we get
\begin{equation}\label{ec32}
\frac{1}{|\lambda_n|}\Delta w_2^n\to 0  \ \textup{ in } \ L^2(\Omega_1).
\end{equation}
By Theorem \ref{Th regularity}, the trace theorem and Lemma \ref{Lemma_01}, we get
\begin{align*}
&\left|\left\langle \beta_1\partial_{\nu}\Delta w_1^n, w_2^n\right\rangle_{L^2(I)}\right|\\
&=\left|\left\langle -\beta_2\partial_{\nu}w_4^n-\mu\partial_{\nu}w_3^n, w_2^n\right\rangle_{L^2(I)}\right|\\
&\leq C \left(\left\|\partial_{\nu}w_4^n\right\|_{L^2(I)}+ \left\|\partial_{\nu}w_3^n\right\|_{L^2(I)}\right)\left\|w_2^n\right\|_{L^2(I)}\\
&\leq C \left(\left\|w_4^n\right\|^{1/2}_{H^2(\Omega_2)} \left\|w_4^n\right\|^{1/2}_{H^1(\Omega_2)}+\left\|w_3^n\right\|^{1/2}_{H^2(\Omega_1)} \left\|w_3^n\right\|^{1/2}_{H^1(\Omega_1)}\right)\left\|w_2^n\right\|_{H^1(\Omega_1)}.
\end{align*}
Then,
\begin{align*}\label{ec33}
\begin{split}
&\left|\frac{1}{\lambda_n}\left\langle \beta_1\partial_{\nu}\Delta w_1^n, w_2^n\right\rangle_{L^2(I)}\right|\\
&\leq C \left(\frac{\left\|w_4^n\right\|^{1/2}_{H^2(\Omega_2)}} {|\lambda_n|^{1/2}}\left\|w_4^n\right\|^{1/2}_{H^1(\Omega_2)}+ \frac{\left\|w_3^n\right\|^{1/2}_{H^2(\Omega_1)}}{|\lambda_n|^{1/2}} \left\|w_3^n\right\|^{1/2}_{H^1(\Omega_1)}\right)\frac{\left\|w_2^n\right\|_{H^1(\Omega_1)}}{|\lambda_n|^{1/2}}.
\end{split}
\end{align*}
From \eqref{tion1}, \eqref{ec19} and \eqref{tion2}, it follows that
\begin{equation}\label{ec35}
\frac{1}{|\lambda_n|}\left\langle \partial_{\nu}\Delta w_1^n, w_2^n\right\rangle_{L^2(I)}\to 0 .
\end{equation}
The limits \eqref{ec16} and \eqref{ec32} imply
\begin{equation}\label{ec36}
\frac{1}{|\lambda_n|}\Delta w_3^n\to 0  \ \textup{ in } \ L^2(\Omega_1).
\end{equation}
From \eqref{ec31}, \eqref{ec32}, \eqref{ec35} and \eqref{ec36}, we obtain
\begin{equation}\label{ec37}
w_2^n\to 0 \ \textup{ in } \ L^2(\Omega_1).
\end{equation}
Finally, the limits \eqref{ec14}, \eqref{ec20}, \eqref{ec30} and \eqref{ec37} allow us to write $\left\|w^n\right\|_{\mathcal{H}_0}\to 0 $, which is a contradiction to $\left\|w^n\right\|_{\mathcal{H}_0}=1$ for all $n\in\mathbb{N}$.
\end{proof}


\section{The case of undamped membrane}\label{Lack_exponential_stability}

We now consider the situation when the membrane is undamped, i.e. $m=0$. First, we will prove that the solution of our system is not exponentially stable. The proof follows the ideas from Theorem 3.5 in \cite{munoz2017racke}.

\begin{teor}\label{lack exp stab}
For $\gamma, \rho\geq0$ and $m=0$, the system \eqref{pl-1}-\eqref{inc} is not exponentially stable.
\end{teor}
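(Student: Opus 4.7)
The plan is to apply the contrapositive of the Gearhart--Pr\"uss resolvent criterion (\cite{Pruss1984C_0semigroup}). By Remark~\ref{remark strongly stable}~b) we have $i\R\subset\rho(\mathcal A)$ even when $m=0$, so the semigroup fails to be exponentially stable precisely when the resolvent is unbounded on the imaginary axis. I will construct sequences $(\omega_n)_{n\in\N}\subset\R$ and $(W^n)_{n\in\N}\subset D(\mathcal A)$ with $\|W^n\|_{\mathcal H_\gamma}=1$ and $\|(i\omega_n-\mathcal A)W^n\|_{\mathcal H_\gamma}\to 0$, which yields the claim.

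Since with $m=0$ the membrane equation \eqref{m} is conservative, I choose $(\omega_n)$ adapted to its oscillatory spectrum. Let $(\lambda_n^2,\varphi_n)_{n\in\N}$ be the Dirichlet eigenpairs of $-\Delta$ on $\Omega_2$ with $\|\varphi_n\|_{L^2(\Omega_2)}=1$ and $\lambda_n\to\infty$, and set $\omega_n\coloneqq \lambda_n\sqrt{\beta_2/\rho_2}$. The naive ansatz $\widetilde W^n=(0,0,0,\varphi_n,i\omega_n\varphi_n)^\top$ would make the membrane rows of $(i\omega_n-\mathcal A)\widetilde W^n$ identically zero, because $i\omega_n\varphi_n-i\omega_n\varphi_n=0$ and $-\rho_2\omega_n^2\varphi_n+\beta_2\lambda_n^2\varphi_n=0$. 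However $\widetilde W^n$ fails to lie in $D(\mathcal A)$ in general, because the nontrivial flux $\beta_2\partial_\nu\varphi_n|_I$ violates the transmission condition \eqref{int-2} in the strong trace sense (cf.\ Remark~\ref{2.3}~b)).

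To fix this I insert a boundary-layer corrector in the plate components, concentrated in a tubular neighborhood of $I$ of thickness $\delta_n\to 0$. Writing $d=\mathrm{dist}(\cdot,I)$ and $\pi$ for the normal projection onto $I$, a model corrector is $w_1^n(x)\coloneqq \tfrac{1}{6\beta_1}d(x)^3\chi(d(x)/\delta_n)\widetilde{g_n}(\pi(x))$, with $\chi\in C_c^\infty([0,1))$, $\chi(0)=1$, and $\widetilde{g_n}$ a smooth extension of $-\beta_2\partial_\nu\varphi_n|_I$ to a neighborhood of $I$. A Taylor expansion gives $w_1^n=\partial_\nu w_1^n=0$ on $I\cup\Gamma$ (the support stays away from $\Gamma$ for $n$ large) and $\beta_1\partial_\nu\Delta w_1^n|_I=-\beta_2\partial_\nu\varphi_n$. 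Setting $W^n\coloneqq (w_1^n,i\omega_n w_1^n,0,\varphi_n,i\omega_n\varphi_n)^\top$ and normalizing, the element lies in $D(\mathcal A)$ by construction.

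It remains to verify that the membrane part dominates the norm, $\|W^n\|_{\mathcal H_\gamma}^2=2\beta_2\lambda_n^2+o(\lambda_n^2)$, and that the residual is subcritical, $\|(i\omega_n-\mathcal A)W^n\|_{\mathcal H_\gamma}=o(\lambda_n)$. Rows 1, 4 and 5 of the residual vanish by construction, and rows 2 and 3 involve only derivatives of $w_1^n$ (namely $\beta_1\Delta^2 w_1^n$, $\omega_n^2(\rho_1-\gamma\Delta)w_1^n$, $\omega_n\rho\Delta w_1^n$ and $\omega_n\mu\Delta w_1^n$). Using the Rellich trace bound $\|\partial_\nu\varphi_n\|_{L^2(I)}\lesssim \lambda_n$ and a $\delta_n$-scaling analysis, each of these terms can be made $o(\lambda_n)$ in the $\mathcal H_\gamma$-topology for a suitable choice $\delta_n\sim\lambda_n^{-\alpha}$. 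The main obstacle will be the simultaneous control of all these scales: $\delta_n$ must be small enough that the plate part of the norm is subdominant, yet large enough that the highest-order terms (which pick up powers of $\delta_n^{-1}$ from differentiating the cutoff) do not dominate. For $\gamma>0$ the regularizing effect of $(\rho_1-\gamma\Delta)^{-1}$ in row 2 absorbs the worst contribution $\beta_1\Delta^2 w_1^n$; when $\gamma=0$ I expect it to be necessary to move part of the corrector to $w_3^n$ and use instead the parabolic smoothing of the thermal equation in row 3, mirroring how the two cases were handled separately in Theorems~\ref{exp stab} and~\ref{Main_Th_section8}.
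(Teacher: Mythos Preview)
Your approach differs fundamentally from the paper's. Rather than constructing explicit Weyl-type sequences, the paper argues via the essential spectral radius: it compares $\mathcal S(t)$ with the semigroup $\widetilde{\mathcal S}(t)$ generated by the undamped Dirichlet wave operator on the closed subspace $\{0\}^3\times H^1_0(\Omega_2)\times L^2(\Omega_2)\subset\h$, and shows---using an energy identity for the difference, the trace continuity $H^1(\Omega_1)\to L^2(I)$, and the Aubin--Lions lemma---that $\mathcal S(t)-\widetilde{\mathcal S}(t)$ is compact on this subspace. Since $\widetilde{\mathcal S}(t)$ is unitary, $r_{\mathrm{ess}}(\widetilde{\mathcal S}(t))=1$, and invariance of the essential spectral radius under compact perturbations (Theorem~3.3 in \cite{munoz2017racke}) forces $r_{\mathrm{ess}}(\mathcal S(t))=1$, which precludes exponential decay. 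This soft argument treats all $\gamma,\rho\ge 0$ uniformly and entirely avoids boundary-layer constructions.

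Your constructive route has genuine gaps. A minor one: Remark~\ref{remark strongly stable}~b) establishes $i\R\subset\rho(\mathcal A)$ only under the hypotheses of Theorem~\ref{teorcondition} (in particular $\rho>0$ and the geometric condition \eqref{geometric condition}), so you cannot invoke it for arbitrary $\rho\ge 0$; this is, however, inessential, since if some $i\omega_n\in\sigma(\mathcal A)$ then non-exponential stability is immediate. The substantive problem is the scaling of the corrector when $\gamma=0$. The contribution $\beta_1\Delta^2 w_1^n$ to the second row of the residual has $L^2(\Omega_1)$-norm at least of order $\delta_n^{-1/2}\|\partial_\nu\varphi_n\|_{L^2(I)}\sim\delta_n^{-1/2}\lambda_n$ (from four normal derivatives hitting $d^3\chi(d/\delta_n)$ on the strip of width $\delta_n$), and for $\gamma=0$ this enters $\|F^n\|_{\mathcal H_0}$ undamped; it cannot be made $o(\lambda_n)$ for any choice $\delta_n\to 0$. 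You anticipate this and propose shifting part of the correction to $w_3^n$, but that feeds back through $\mu\Delta w_3^n$ in row~2 and through row~3, and you give no argument that the resulting system of constraints can be closed. Even for $\gamma>0$ the claimed cancellations are asserted rather than verified, and tangential derivatives of $\partial_\nu\varphi_n$ (whose $L^2(I)$-norms grow with $\lambda_n$) appear in $\Delta^2 w_1^n$ and must be tracked. The paper's compactness argument sidesteps all of this.
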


\begin{proof}
We set $\widetilde{\mathcal H}\coloneqq \{0\}\times\{0\} \times\{0\}\times H^1_0(\Omega_2)\times L^2(\Omega_2)$. Note that $\widetilde{\mathcal H}$ is a Hilbert subspace of $\h$. We define the operator $\widetilde{\mathcal A}$ given by $D(\widetilde{\mathcal A}) = \{0\}\times\{0\} \times\{0\}\times(H^2(\Omega_1)\cap H^1_0(\Omega_1))\times H^1_0(\Omega_1)\subset \widetilde{\mathcal H}$ and
\[ \widetilde{\mathcal A}  w = (0,0,0,w_5, \beta_2/\rho_2 \Delta w_4)^\top.\]
With respect to the fourth and fifth component, $\widetilde{\mathcal H}$ is the first-order system related to the non-damped wave equation for $\widetilde v \coloneqq  \widetilde w_4$
\begin{equation}\label{system}
\begin{cases}
\rho_2\widetilde{v}_{tt}-\beta_2\Delta\widetilde{v}=0 \ \textup{ in } \ \Omega_2\times(0, \infty),\\
\widetilde{v}=0 \ \textup{ on } \ I\times(0, \infty),\\
\widetilde{v}(\cdot, 0)=\widetilde{v}^0, \ \widetilde{v}_t(\cdot, 0)=\widetilde{v}^1 \ \textup{ in } \ \Omega_2.
\end{cases}
\end{equation}
Let $(\widetilde{\mathcal S}(t))_{t\geq0}$ be the $C_0$-semigroup generated by $\widetilde{\mathcal A}$ on $\widetilde{\mathcal H}$. As \eqref{system} contains no damping term, this is a unitary semigroup.
Thus, the essential spectral radius $r_{\text{ess}}(\widetilde{\mathcal S}(t))$ is equal to 1.

We will show that $\mathcal S(t)-\widetilde{\mathcal S}(t) \colon\widetilde{\mathcal H}\longrightarrow\h$ is compact, where $(\mathcal S(t))_{t\ge 0}$ stands for the $C_0$-semigroup generated by $\mathcal A$ (Theorem~\ref{!}). It is enough to prove that $S(t)-\widetilde{S}(t) : \mathcal W\longrightarrow\h$ is compact for some dense subspace $\mathcal W$ of $\widetilde{\mathcal H}$. We define
$$
\mathcal W\coloneqq \{0\}\times\{0\} \times\{0\}\times \mathscr D(\Omega_2) \times \mathscr D(\Omega_2) .
$$
Then $\mathcal W$ is dense in $\widetilde{\mathcal H}$, and obviously $\mathcal W\subset D(\mathcal A)\cap D(\widetilde{\mathcal A})$. For $w_0\in\mathcal W$, we consider
$$
E(t)\coloneqq \frac{1}{2}\|\mathcal S(t)w_0-\widetilde{\mathcal S}(t)w_0\|^2_\h \quad (t\geq0).
$$
Let $w(t) \coloneqq \mathcal S(t)w_0$ and $\widetilde w(t) \coloneqq \widetilde{\mathcal S}(t)w_0$. Then
\begin{equation}\label{5-1}
\begin{aligned}
  E'(t) & = \tfrac{d}{dt} \|w(t)-\widetilde w(t)\|_{\h}^2 = \Re\, \langle w'(t)-\widetilde w'(t), w(t)-\widetilde w(t) \rangle_{\h}  \\
  & = \Re\,\langle \mathcal A w(t) -\widetilde{\mathcal A} \widetilde w(t), w(t)-\widetilde w(t) \rangle_{\h} \\
  & = \Re\,  \langle \mathcal A w(t),w(t)  \rangle_{\h} + \Re\,\langle \widetilde{\mathcal A} \widetilde w(t)\widetilde w(t) \rangle_{\h}  \\
  & \quad   - \Re\,\langle \mathcal A w(t),\widetilde w(t) \rangle_{\h}-\Re\,\langle\widetilde{\mathcal A} \widetilde w(t), w(t) \rangle_{\h}.
\end{aligned}
\end{equation}
From \eqref{2-11} we know
$ \Re\,  \langle \mathcal A w(t),w(t)  \rangle_{\h} \le 0$, and for the undamped wave equation, we obtain $ \Re\,\langle \widetilde{\mathcal A} \widetilde w(t)\widetilde w(t) \rangle_{\h}=0$. Moreover, by the definition of $\mathcal A$ and \eqref{2-11a}, we see that
\begin{align*}
 \langle \mathcal Aw(t),\widetilde w(t)\rangle_{\h}   & = \langle\mathbb Aw(t),\widetilde w(t)\rangle_{\mathcal H_\gamma'\times \h}\\
  & = -\beta_2\langle\nabla w_4(t),\nabla\widetilde w_5(t)\rangle_{L^2(\Omega_2)} + \beta_2 \langle \nabla w_5(t),\nabla\widetilde w_4(t)\rangle_{L^2(\Omega_2)}.
\end{align*}
With integration by parts, we obtain
\begin{align*}
 \langle\widetilde{\mathcal A} \widetilde w(t),& w(t) \rangle_{\h} = \beta_2\langle\nabla\widetilde w_5(t), \nabla w_4(t)\rangle_{L^2(\Omega_2)} + \rho_2 \langle \tfrac{\beta_2}{\rho_2} \Delta\widetilde w_4(t), w_5(t)\rangle_{L^2(\Omega_2)}\\
  & = \beta_2 \langle\nabla\widetilde w_5(t), \nabla w_4(t)\rangle_{L^2(\Omega_2)} - \beta_2 \langle  \nabla\widetilde w_4(t), \nabla w_5(t)\rangle_{L^2(\Omega_2)}\\
  & \quad  + \langle\partial_\nu\widetilde w_4(t),w_5(t)\rangle_{L^2(I)}.
\end{align*}
Taking the real part in the last two equalities and inserting this into \eqref{5-1}, we see that
\[ E'(t) \le \langle\partial_\nu\widetilde w_4(t),w_5(t)\rangle_{L^2(I)} =  \langle\partial_\nu\widetilde w_4(t),w_2(t)\rangle_{L^2(I)},\]
where we used in the last equality that $w_5(t)=w_2(t)$ on $I$ because $w(t)\in D(\mathcal A)$. Therefore, noting $E(0)=0$, we have
\begin{equation}\label{integral}
E(t)\le \beta_2\Re \int_0^t\langle\partial_{\nu}\widetilde{w_4}(s), w_2(s)\rangle_{L^2(I)}ds.
\end{equation}
Let $(w_0^{k})_{k\in\N}\subset\mathcal W$ be a bounded sequence, and let $w^{k}(t) \coloneqq \mathcal S(t)w_0^{k}$ and $\widetilde w ^{k}(t) \coloneqq \widetilde {\mathcal S}(t)w_0^{k}$. As $\widetilde w^{k}\in C([0,\infty),D(\widetilde{\mathcal A}))$, the sequence $(\partial_{\nu}\widetilde w_4^{k})_{k\in\N}\subset L^2([0, t], L^2(I))$ is uniformly bounded.
Therefore there exists a subsequence which will again be denoted by $(\widetilde w_4^k)_{k\in\N}$ such that
$(\partial_{\nu}\widetilde w_4^k)_{k\in\N}$ converges weakly in $L^2([0, t], L^2(I))$. Similarly, due to $w^k\in C^1([0, \infty), \h)$, we have that the sequences $(w_2^k)_{k\in\N}\subset L^2([0, t], H^2(\Omega_1))$ and
$(\partial_t w_2^k)_{k\in\N}\subset L^2([0, t], L^2(\Omega_1))$ are both uniformly bounded. By the lemma of Aubin-Lions, there exists a
subsequence  which will again be denoted by $(w^k)_{k\in\N}$, such that $(w_2^k)_{k\in\N}\subset L^2([0, t], H^1(\Omega_1))$
converges. As the trace to the boundary is continuous from $H^1(\Omega_1)$ to $L^2(I)$, also the sequence
$(w_2^k)_{k\in\N}\subset L^2([0, t], L^2(I))$ is convergent. For $k, \ell\in\N$ we now denote by
$$
E^{k\ell}(t)\coloneqq \frac{1}{2}\|\mathcal S(t)(w_0^k-w_0^\ell)-\widetilde{\mathcal S}(t)(w_0^k-w_0^\ell)\|^2_\h \quad (t\geq0).
$$
By \eqref{integral} we have that
$$
E^{k\ell}(t)\leq \beta_2\big| \langle\partial_{\nu}\widetilde{w}_4^{k\ell}, w_2^{k\ell}\rangle_{L^2([0, t], L^2(I))}\big|\to 0\;(k, \ell\to\infty),
$$
where ${w}^{k\ell}\coloneqq {\mathcal S}(t)(w_0^k - w_0^\ell)$ and $\widetilde{w}^{k\ell}\coloneqq \widetilde{\mathcal S}(t)(w_0^k - w_0^\ell)$ for $k,\ell\in\N$. Therefore, $((\mathcal S(t)-\widetilde{\mathcal S}(t))w_0^k)_{k\in\N}$ is a Cauchy sequence in $\h$ and thus
convergent. This shows the compactness of $\mathcal S(t)-\widetilde{\mathcal S}(t) \colon \mathcal W\to\h$. Therefore, $\mathcal S(t)-\widetilde{\mathcal S}(t) \colon \widetilde{\mathcal H}\to\h$ is compact. As $r_{\operatorname{ess}}(\widetilde{\mathcal S}(t))=1$, Theorem 3.3
in \cite{munoz2017racke} implies that $r_{\operatorname{ess}}(\mathcal S(t))=1$, and thus $(\mathcal S(t))_{t\geq0}$ is not exponentially stable.
\end{proof}

Although the last result tells us that there  is no exponential stability in the case of an undamped membrane, we will now show that the system decays polynomially under the following geometric condition: There exists some $x_0\in\R^2$ such that
\begin{equation}\label{geometric condition}
q(x)\cdot\nu(x)\leq0  \; (x\in I), \quad \text{ where } q(x) \coloneqq  x-x_0\;(x\in \overline{\Omega_2}).
\end{equation}

\begin{teor}\label{teorcondition}
Let $m=0$, $\rho>0$, $\gamma\ge 0$ and assume that the geometrical condition \eqref{geometric condition} is satisfied. Then, the semigroup $(\mathcal{S}(t))_{t\geq0}$ generated by $\mathcal A$ decays polynomially,  i.e., there exist constants $\alpha,C>0$ such that
$$
\left\|\mathcal{S}(t)w_0\right\|_{\h}\leq Ct^{-\alpha}\left\|w_0\right\|_{D (\mathcal A)}
$$
for all $t>0$ and $w_0\in D (\mathcal A)$.
\end{teor}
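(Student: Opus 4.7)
The plan is to invoke the Borichev--Tomilov characterization of polynomial stability: for a bounded $C_0$-semigroup on a Hilbert space whose generator $\mathcal A$ satisfies $i\R\subset\rho(\mathcal A)$, one has $\|\mathcal S(t)w_0\|_\h\le C t^{-1/\beta}\|w_0\|_{D(\mathcal A)}$ for all $w_0\in D(\mathcal A)$ if and only if there exists $\beta>0$ with $\|(i\lambda-\mathcal A)^{-1}\|_{\mathcal L(\h)}=O(|\lambda|^\beta)$ as $|\lambda|\to\infty$. Two ingredients will therefore be established: the spectral inclusion $i\R\subset\rho(\mathcal A)$, and a polynomial resolvent bound of the form $\|(i\lambda-\mathcal A)^{-1}\|_{\mathcal L(\h)}\le C(1+|\lambda|)^\beta$, yielding the stated decay with $\alpha=1/\beta$.

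For the spectral inclusion, compactness of $\mathcal A^{-1}$ reduces the task to excluding nontrivial purely imaginary eigenvalues. If $\mathcal A w=i\lambda w$ with $\lambda\neq 0$, the dissipation identity \eqref{2-11} combined with $\rho>0$ forces $\nabla w_2\equiv 0$, $\nabla w_3\equiv 0$ and $w_3|_\Gamma=0$; since $w\in\mathcal V$ has $w_2=0$ on $\Gamma$, Poincar\'e's inequality gives $w_2\equiv 0$ and $w_3\equiv 0$, and then $w_1=(i\lambda)^{-1}w_2\equiv 0$. The transmission conditions produce $w_4|_I=w_1|_I=0$ and $\beta_2\partial_\nu w_4|_I=-\beta_1\partial_\nu\Delta w_1|_I-\mu\partial_\nu w_3|_I=0$, so $w_4$ solves $-\beta_2\Delta w_4=\lambda^2\rho_2 w_4$ in $\Omega_2$ with zero Cauchy data on $I$; unique continuation for second-order elliptic operators yields $w_4\equiv 0$, and then $w_5=i\lambda w_4\equiv 0$. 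The geometric hypothesis is not needed at this step.

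For the resolvent estimate, I proceed by contradiction: assume there exist $(\lambda_n)\subset\R$ with $|\lambda_n|\to\infty$ and $(w^n)\subset D(\mathcal A)$ with $\|w^n\|_\h=1$ and $|\lambda_n|^\beta f^n\to 0$ in $\h$, where $f^n\coloneqq(i\lambda_n-\mathcal A)w^n$. Dissipativity delivers
\begin{equation*}
 \rho\|\nabla w_2^n\|^2_{L^2(\Omega_1)}+\beta_0\|\nabla w_3^n\|^2_{L^2(\Omega_1)}+\beta_0\kappa\|w_3^n\|^2_{L^2(\Gamma)}\le \|w^n\|_\h\|f^n\|_\h\longrightarrow 0,
\end{equation*}
so that, since $\rho>0$ and $w_2^n=0$ on $\Gamma$, Poincar\'e's inequality delivers smallness of $w_2^n$ and $w_3^n$ in $H^1(\Omega_1)$ at a polynomial rate in $|\lambda_n|^{-1}$. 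The undamped membrane is then controlled by a Rellich-type multiplier: rewriting the resolvent equation as $-\beta_2\Delta w_4^n-\lambda_n^2\rho_2 w_4^n=i\lambda_n\rho_2 f_4^n+\rho_2 f_5^n$ and testing against $2\Re(q\cdot\nabla\overline{w_4^n})$ with $q(x)=x-x_0$, the identity $\operatorname{div}q=2$ in $\R^2$ produces
\begin{equation*}
 \lambda_n^2\rho_2\|w_4^n\|^2_{L^2(\Omega_2)}+\tfrac{\beta_2}{2}\int_I(-q\cdot\nu)|\nabla w_4^n|^2\,dS = \beta_2\,\Re\int_I(q\cdot\nabla\overline{w_4^n})\partial_\nu w_4^n\,dS+\text{lower-order terms},
\end{equation*}
in which the geometric condition $q\cdot\nu\le 0$ on $I$ (recall that $\nu$ points outward to $\Omega_1$, hence inward to $\Omega_2$) renders the second summand on the left nonnegative and yields a priori control of both $\lambda_n^2\|w_4^n\|^2$ and of $|\nabla w_4^n|$ on $I$.

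The principal obstacle is the estimation of the indefinite boundary integral $\int_I(q\cdot\nabla\overline{w_4^n})\partial_\nu w_4^n\,dS$. Via the second transmission condition I rewrite $\partial_\nu w_4^n|_I=-\beta_2^{-1}(\beta_1\partial_\nu\Delta w_1^n+\mu\partial_\nu w_3^n)$, so this integral is governed by the plate boundary traces $\partial_\nu\Delta w_1^n$ and $\partial_\nu w_3^n$. These traces will be bounded by combining the parameter-elliptic a priori estimate of Theorem~\ref{Cor Regu boundaryProbNOnHom} with Lemma~\ref{Lem ParameterEllipOps} for the plate equation and with Remark~\ref{3.1}(b) for the temperature equation, while Lemma~\ref{Lemma_01} supplies the interpolation gains on $\partial\Omega_1$ exactly as exploited in the proof of Theorem~\ref{Main_Th_section8}. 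Careful book-keeping of the powers of $|\lambda_n|$ introduced by the right-hand sides of these boundary value problems (which contain factors such as $\lambda_n^2 w_1^n$ and $\lambda_n w_3^n$) should show that all indefinite contributions are $o(\lambda_n^2)$, provided $\beta$ is taken large enough; I would expect to recover $\beta=2$, hence $\alpha=\tfrac12$, though the precise exponent is sensitive to the most singular trace encountered. Once $\lambda_n^2\|w_4^n\|^2_{L^2(\Omega_2)}$ and $\|\nabla w_4^n\|^2_{L^2(\Omega_2)}$ are seen to vanish, the identity $w_5^n=i\lambda_n w_4^n-f_4^n$ gives $w_5^n\to 0$ in $L^2(\Omega_2)$, contradicting $\|w^n\|_\h=1$.
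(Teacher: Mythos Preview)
Your strategy—dissipation to kill the plate velocity and temperature, Rellich multiplier with $q(x)=x-x_0$ to control $\lambda^2\|w_4\|^2$, then the transmission condition to convert $\partial_\nu w_4|_I$ into plate traces—is exactly the skeleton of the paper's proof. The use of unique continuation for the membrane eigenvalue problem is a clean alternative to the paper's route (which extracts $i\R\subset\rho(\mathcal A)$ \emph{a posteriori} from the resolvent estimate). Your expected exponent $\beta=2$ is far too optimistic, however: after the trace interpolation of Lemma~\ref{Lemma_01} and the parameter-elliptic $H^4$-bound on $w_1$, the worst term in the boundary integral is $|\lambda|^2\|w\|_\h^{23/12}\|\mathcal A f\|_\h^{1/12}$, which forces $\beta=24$ in the paper.

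There is, however, a genuine gap when $\gamma>0$. In your contradiction scheme you take $w^n\in D(\mathcal A)$ and $f^n\in\h$, and then propose to bound $\partial_\nu\Delta w_1^n$ in $L^2(I)$ via the parameter-elliptic estimate of Theorem~\ref{Cor Regu boundaryProbNOnHom}. But the plate equation, after substituting $w_2^n=i\lambda_n w_1^n-f_1^n$, has right-hand side containing $-\gamma\Delta f_2^n$; for $\gamma>0$ and $f^n\in\h$ one only has $f_2^n\in H^1(\Omega_1)$, so $\Delta f_2^n\notin L^2(\Omega_1)$ and the $H^4$-estimate for $w_1^n$ is unavailable. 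Correspondingly, Theorem~\ref{Th regularity} gives only $w_1^n\in H^3(\Omega_1)$ for $w^n\in D(\mathcal A)$ when $\gamma>0$, which is not enough to place $\partial_\nu\Delta w_1^n$ in $L^2(I)$. The paper resolves this by invoking, instead of Borichev--Tomilov, the criterion of Mu\~noz~Rivera--Racke (\cite{munoz2017racke}, Lemma~5.2) with $\beta'=1$: the estimate $\|w\|_\h\le C|\lambda|^\beta\|\mathcal A f\|_\h$ is proved for $f\in D(\mathcal A)$, which forces $w\in D(\mathcal A^2)$ and hence $w_1\in H^4(\Omega_1)$ by Theorem~\ref{Th regularity}(ii), and in particular makes $\Delta f_2\in L^2(\Omega_1)$ available via \eqref{20equa}. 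For $\gamma=0$ this obstruction disappears (Theorem~\ref{Th regularity}(iii) already gives $D(\mathcal A)\subset H^4(\Omega_1)\times\cdots$), and the paper indeed takes $\beta'=0$ there; so your approach would go through in that case, but not uniformly in $\gamma\ge 0$ as stated.
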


\begin{proof}
By Lemma 5.2 in \cite{munoz2017racke}, the semigroup is polynomially stable if $i\R\subset\rho(\mathcal A)$ and if there exist
 $C>0$, $\lambda_0>0$, and $\beta>0, \,\beta'\ge0$ with
\begin{equation}\label{5-2}
\|(i\lambda  - \mathcal A)^{-1}f\|_{\h}\leq C|\lambda|^\beta\|\mathcal A^{\beta'}f\|_{\h}\quad (f\in D(\mathcal A^{\beta'}),\,\lambda\in\R,\, |\lambda|>\lambda_0).
\end{equation}
First, let $\gamma>0$. We will show \eqref{5-2} with $\beta'=1$.
Let $\lambda_0>0$ and $\lambda\in\R$ with $|\lambda|>\lambda_0$. Let $w\in D(\mathcal{A}^2)$ and  $f:=(i\lambda  -\mathcal A)w$. Then $f\in D({\mathcal A})$, and
\begin{align}
i\lambda w_1- w_2&=f_1,\label{2equa}\\
i\lambda\rho_1 w_2-i\lambda\gamma\Delta w_2+\beta_1\Delta^2 w_1+\mu\Delta w_3-\rho\Delta w_2&=\rho_1f_2-\gamma\Delta f_2,\label{4equa}\\
i\lambda\rho_0 w_3-\mu\Delta w_2-\beta_0\Delta w_3&=\rho_0f_3,\label{6equa}\\
i\lambda w_4- w_5&=f_4,\label{3equa}\\
i\lambda\rho_2 w_5-\beta_2\Delta w_4&=\rho_2f_5.\label{5equa}
\end{align}
Replacing \eqref{2equa} into \eqref{4equa} and \eqref{6equa}, we have
\begin{equation}\label{7equa}
\begin{aligned}
-\lambda^2\rho_1 w_1 +\lambda^2\gamma\Delta w_1&+\beta_1\Delta^2 w_1+\mu\Delta w_3-i\lambda\rho\Delta w_1\\
&=i\lambda\rho_1f_1-i\lambda\gamma\Delta f_1-\rho\Delta f_1+\rho_1f_2-\gamma\Delta f_2,
\end{aligned}
\end{equation}
and
\begin{equation}\label{8equa}
i\lambda\rho_0 w_3-i\lambda\mu\Delta w_1-\beta_0\Delta w_3=-\mu\Delta f_1+\rho_0f_3,
\end{equation}
respectively. Replacing \eqref{3equa} into \eqref{5equa}, we get
\begin{equation}\label{9equa}
-\lambda^2\rho_2 w_4-\beta_2\Delta w_4=i\lambda\rho_2f_4+\rho_2f_5.
\end{equation}
Multiplying \eqref{7equa} by $-\overline{ w_1}$, \eqref{8equa} by $i\frac{1}{\lambda}\overline{ w_3}$ and \eqref{9equa} by $-\overline{ w_4}$, integrating and  adding the resulting equalities, we obtain
\begin{align*}
&\lambda^2\left(\rho_1\left\| w_1\right\|^2_{L^2(\Omega_1)}+\rho_2\left\| w_4\right\|^2_{L^2(\Omega_2)}\right)-\rho_0\left\| w_3\right\|^2_{L^2(\Omega_1)}-\beta_1\left\langle \Delta^2 w_1,  w_1\right\rangle_{L^2(\Omega_1)}\\
&+i\lambda\rho\left\langle \Delta w_1,  w_1\right\rangle_{L^2(\Omega_1)}-\lambda^2\gamma\left\langle \Delta w_1, w_1\right\rangle_{L^2(\Omega_1)}-\mu\left\langle \Delta w_3,  w_1\right\rangle_{L^2(\Omega_1)}\\
&+\beta_2\left\langle \Delta w_4,  w_4\right\rangle_{L^2(\Omega_2)}+\mu\left\langle \Delta w_1, w_3\right\rangle_{L^2(\Omega_1)}-i\beta_0\lambda^{-1}\left\langle \Delta w_3,  w_3\right\rangle_{L^2(\Omega_1)}\\
&=-i\lambda\rho_1\left\langle f_1,  w_1\right\rangle_{L^2(\Omega_1)}+i\lambda\gamma\left\langle \Delta f_1, w_1\right\rangle_{L^2(\Omega_1)}+\rho\left\langle \Delta f_1,  w_1\right\rangle_{L^2(\Omega_1)}-\rho_1\left\langle f_2, w_1\right\rangle_{L^2(\Omega_1)}\\
&+\gamma\left\langle \Delta f_2,  w_1\right\rangle_{L^2(\Omega_1)}-\left\langle i\lambda\rho_2f_4+\rho_2f_5, w_4\right\rangle_{L^2(\Omega_2)}-\left\langle i\mu\lambda^{-1}\Delta f_1-i\rho_0\lambda^{-1}f_3,  w_3\right\rangle_{L^2(\Omega_1)}.
\end{align*}
Integrating by parts and using the transmission conditions, we obtain
\begin{align*}
&\lambda^2\left(\rho_1\left\| w_1\right\|^2_{L^2(\Omega_1)}+\rho_2\left\| w_4\right\|^2_{L^2(\Omega_2)}+\gamma\left\|\nabla w_1\right\|^2_{L^2(\Omega_1)}\right)-\beta_1\left\|\Delta w_1\right\|^2_{L^2(\Omega_1)}\\
&-\rho_0\left\| w_3\right\|^2_{L^2(\Omega_1)}-i\lambda\rho\left\|\nabla w_1\right\|^2_{L^2(\Omega_1)}+i2\mu\textup{Im}\left\langle \Delta w_1,  w_3\right\rangle_{L^2(\Omega_1)}-\beta_2\left\|\nabla w_4\right\|^2_{L^2(\Omega_2)}\\
&+i\frac{\beta_0}{\lambda}\left\|\nabla w_3\right\|^2_{L^2(\Omega_1)}+i\frac{\kappa\beta_0}{\lambda}\left\| w_3\right\|^2_{L^2(\Gamma)}=-(\rho+i\lambda\gamma)\left\langle \nabla f_1, \nabla w_1\right\rangle_{L^2(\Omega_1)}\\
&-\gamma\left\langle \nabla f_2, \nabla w_1\right\rangle_{L^2(\Omega_1)}-\left\langle i\lambda\rho_1f_1+\rho_1f_2, w_1\right\rangle_{L^2(\Omega_1)}-\left\langle \rho_2f_5+i\lambda\rho_2f_4,  w_4\right\rangle_{L^2(\Omega_2)}\\
&+i\frac{\mu}{\lambda}\left\langle \nabla f_1, \nabla w_3\right\rangle_{L^2(\Omega_1)}+i\frac{\rho_0}{\lambda}\left\langle f_3, w_3\right\rangle_{L^2(\Omega_1)}.
\end{align*}
Taking real part in the previous equation, we get
\begin{align*}
&\beta_1\left\|\Delta w_1\right\|^2_{L^2(\Omega_1)}+\beta_2\left\|\nabla w_4\right\|^2_{L^2(\Omega_2)}+\rho_0\left\| w_3\right\|^2_{L^2(\Omega_1)}\\
&\leq\lambda^2\left(\rho_1\left\| w_1\right\|^2_{L^2(\Omega_1)}+\rho_2\left\| w_4\right\|^2_{L^2(\Omega_2)}+\gamma\left\|\nabla w_1\right\|^2_{L^2(\Omega_1)}\right)\\
&\quad+\left(\rho\left\|\nabla f_1\right\|_{L^2(\Omega_1)}+|\lambda|\gamma\left\|\nabla f_1\right\|_{L^2(\Omega_1)}+\gamma\left\|\nabla f_2\right\|_{L^2(\Omega_1)}\right)\left\|\nabla w_1\right\|_{L^2(\Omega_1)}\\
&\quad+\rho_1\left(|\lambda|\left\|f_1\right\|_{L^2(\Omega_1)}+\left\|f_2\right\|_{L^2(\Omega_1)}\right)\left\| w_1\right\|_{L^2(\Omega_1)}+\frac{\mu}{|\lambda|}\left\|\nabla f_1\right\|_{L^2(\Omega_1)}\left\|\nabla w_3\right\|_{L^2(\Omega_1)}\\
&\quad+\rho_2\left(\left\|f_5\right\|_{L^2(\Omega_2)}+|\lambda|\left\|f_4\right\|_{L^2(\Omega_2)}\right)\left\| w_4\right\|_{L^2(\Omega_2)}+\frac{\rho_0}{|\lambda|}\left\|f_3\right\|_{L^2(\Omega_1)}\left\| w_3\right\|_{L^2(\Omega_1)}.
\end{align*}
By Lemma~\ref{2.1}, \eqref{dissip} and $|\lambda|\ge \lambda_0$, we obtain
\begin{align}\label{10equa}
\begin{split}
\beta_1\|\Delta &w_1\|^2_{L^2(\Omega_1)}+\beta_2\|\nabla w_4\|^2_{L^2(\Omega_2)}+\rho_0\| w_3\|^2_{L^2(\Omega_1)}\le  C \Big(\lambda^2\| w_1\|^2_{H^1(\Omega_1)}\\
& \quad +\lambda^2\| w_4\|^2_{L^2(\Omega_2)}+|\lambda|\left\|\omega\right\|_{\h}\left\|f\right\|_{\h} +|\lambda|^{-1}\left\|\omega\right\|^{1/2}_{\h}\left\|f\right\|^{3/2}_{\h}\Big).
\end{split}
\end{align}
Since
\begin{align*}
 \| w_2 \|^2_{L^2(\Omega_1)} & \leq4 (\lambda^2 \| w_1 \|^2_{L^2(\Omega_1)}+ \|f_1 \|^2_{L^2(\Omega_1)} )\leq C (\lambda^2 \| w_1 \|^2_{H^1(\Omega_1)}+ \|f \|^2_{\h} ),\\
 \|\nabla w_2 \|^2_{L^2(\Omega_1)}& \leq\tfrac{1}{\rho} \|\omega \|_{\h} \|f\|_{\h},\\
 \| w_5 \|^2_{L^2(\Omega_2)}& \leq4 (\lambda^2 \| w_4 \|^2_{L^2(\Omega_2)}+ \|f_4 \|^2_{L^2(\Omega_2)} )\leq C (\lambda^2 \| w_4 \|^2_{L^2(\Omega_2)}+ \|f \|^2_{\h} ),
\end{align*}
we get
\begin{align}\label{11equa}
\begin{split}
&\rho_1\left\| w_2\right\|^2_{L^2(\Omega_1)}+\gamma\left\|\nabla w_2\right\|^2_{L^2(\Omega_1)}+\rho_2\left\| w_5\right\|^2_{L^2(\Omega_2)}\\
& \quad \leq C \left(\lambda^2\left\| w_1\right\|^2_{H^1(\Omega_1)}+\lambda^2\left\| w_4\right\|^2_{L^2(\Omega_2)}+\left\|\omega\right\|_{\h}\left\|f\right\|_{\h}+\left\|f\right\|^2_{\h}\right).
\end{split}
\end{align}
Poincar\'e's inequality implies
\begin{align}\label{12equa}
\lambda^2\left\| w_1\right\|^2_{H^1(\Omega_1)}&=\left\| w_2+f_1\right\|^2_{H^1(\Omega_1)}\leq C \left(\left\|\nabla w_2\right\|^2_{L^2(\Omega_1)}+\left\|f_1\right\|^2_{H^1(\Omega_1)}\right)\nonumber\\
&\leq C \left(\left\|\omega\right\|_{\h}\left\|f\right\|_{\h}+\left\|f\right\|^2_{\h}\right).
\end{align}
From \eqref{10equa}-\eqref{12equa}, it follows that
\begin{align}\label{13equa}
\begin{split}
\left\|\omega\right\|^2_{\h}\leq C \Big(\lambda^2\left\| w_4\right\|^2_{L^2(\Omega_2)}&+|\lambda|\left\|\omega\right\|_{\h}\left\|f\right\|_{\h}\\
&+|\lambda|^{-1}\left\|\omega\right\|^{1/2}_{\h}\left\|f\right\|^{3/2}_{\h}+\left\|f\right\|^2_{\h}\Big).
\end{split}
\end{align}

Now we will prove that
\begin{equation}\label{14equa}
\lambda^2\left\| w_4\right\|^2_{L^2(\Omega_2)}\leq C \Big(|\lambda|\left\|\omega\right\|_{\h}\left\|f\right\|_{\h}+\left\|f\right\|^2_{\h}+\beta_2\int_I|\partial_{\nu} w_4(q\nabla\overline{ w_4})|dS\Big).
\end{equation}
In fact, using Rellich's identity (see equation (2.5) from \cite{Mitidieri1993}), we have the following equality
\begin{equation}\label{15equa}
\textup{Re}\int_{\Omega_2}\Delta w_4(q\nabla\overline{ w_4})dx=-\textup{Re}\int_I\left[\partial_{\nu} w_4(q\nabla\overline{ w_4})-\frac{1}{2}(q\cdot\nu)|\nabla w_4|^2\right]dS.
\end{equation}
Multiplying \eqref{9equa} by $q\nabla\overline{ w_4}$ and integrating, we get
$$
-\lambda^2\rho_2\int_{\Omega_2} w_4(q\nabla\overline{ w_4})dx-\beta_2\int_{\Omega_2}\Delta w_4(q\nabla\overline{ w_4})dx=\int_{\Omega_2}(i\lambda\rho_2f_4+\rho_2f_5)(q\nabla\overline{ w_4})dx.
$$
Taking real part and using \eqref{15equa}, we see that
\begin{align*}
&-\lambda^2\rho_2\textup{Re}\int_{\Omega_2} w_4(q\nabla\overline{ w_4})dx+\beta_2\textup{Re}\int_I\left[\partial_{\nu} w_4(q\nabla\overline{ w_4})-\frac{1}{2}(q\cdot\nu)|\nabla w_4|^2\right]dS\\
&=\textup{Re}\int_{\Omega_2}(i\lambda\rho_2f_4+\rho_2f_5)(q\nabla\overline{ w_4})dx.
\end{align*}
Using integration by parts and the identity $q\nabla w_4=\textup{div}(q w_4)-2 w_4$, it holds
\begin{align*}
\int_{\Omega_2} w_4(q\nabla\overline{ w_4})dx&=\int_{\Omega_2} w_4\left(\textup{div}(q\overline{ w_4})-2\overline{ w_4}\right)dx\\
&=\int_{\Omega_2} w_4\textup{div}(q\overline{ w_4})dx-2\int_{\Omega_2} w_4\overline{ w_4}dx\\
&=-\int_{\Omega_2}\nabla w_4\cdot q\overline{ w_4}dx-\int_I w_4q\overline{ w_4}\cdot\nu dS-2\left\| w_4\right\|^2_{L^2(\Omega_2)}
\end{align*}
and therefore
$$
\int_{\Omega_2} w_4(q\nabla\overline{ w_4})dx+\overline{\int_{\Omega_2} w_4(q\nabla\overline{ w_4})dx}=-\int_I(q\nu)| w_4|^2dS-2\left\| w_4\right\|^2_{L^2(\Omega_2)}.
$$
Thus,
$$
\textup{Re}\int_{\Omega_2} w_4(q\nabla\overline{ w_4})dx=-\left\| w_4\right\|^2_{L^2(\Omega_2)}-\frac{1}{2}\int_I(q\nu)| w_4|^2dS.
$$
In consequence,
\begin{align*}
\lambda^2\rho_2\left\| w_4\right\|^2_{L^2(\Omega_2)}&=-\lambda^2\rho_2\textup{Re}\int_{\Omega_2} w_4(q\nabla\overline{ w_4})dx-\frac{1}{2}\lambda^2\rho_2\int_I(q\nu)| w_4|^2dS\\
&=\textup{Re}\int_{\Omega_2}(i\lambda\rho_2f_4+\rho_2f_5)(q\nabla\overline{ w_4})dx-\beta_2\textup{Re}\int_I\partial_{\nu} w_4(q\nabla\overline{ w_4})dS\\
&\quad+\frac{1}{2}\beta_2\int_I(q\nu)|\nabla w_4|^2dS-\frac{1}{2}\lambda^2\rho_2\int_I(q\nu)| w_4|^2dS.
\end{align*}
Due to $q\cdot\nu\leq0$ on $I$, $ w_1= w_4$ on $I$, trace theorem and \eqref{12equa}, we obtain that
\begin{align*}
\lambda^2\rho_2\left\| w_4\right\|^2_{L^2(\Omega_2)}&\leq C \Big(|\lambda|\left\|\omega\right\|_{\h}\left\|f\right\|_{\h}+\beta_2\left|\int_I\partial_{\nu} w_4(q\nabla\overline{ w_4})dS\right|+\lambda^2\left\| w_1\right\|^2_{L^2(I)}\Big)\\
&\leq  \Big(|\lambda|\left\|\omega\right\|_{\h}\left\|f\right\|_{\h}+\beta_2\int_I|\partial_{\nu} w_4(q\nabla\overline{ w_4})|dS+\lambda^2\left\| w_1\right\|^2_{H^1(\Omega_1)}\Big)\\
&\leq C\Big(|\lambda|\left\|\omega\right\|_{\h}\left\|f\right\|_{\h}+\left\|f\right\|^2_{\h}+\beta_2\int_I|\partial_{\nu} w_4(q\nabla\overline{ w_4})|dS\Big).
\end{align*}

Next, we will show that for any $\varepsilon>0$ there exists a constant $C_{\varepsilon}>0$ such that
\begin{equation}\label{16equa}
\beta_2\int_I|\partial_{\nu} w_4(q\nabla\overline{ w_4})|dS\leq\varepsilon\left\|\omega\right\|^2_{\h}+C_{\varepsilon}|\lambda|^{48}\left\|{\mathcal A} f\right\|^2_{\h}.
\end{equation}
Indeed, the  transmission conditions imply
\begin{align}\label{17equa}
\beta_2\int_I|\partial_{\nu} w_4(q\nabla\overline{ w_4})|dS&\leq\left\|\beta_1\partial_{\nu}(\Delta w_1)+\mu\partial_{\nu} w_3\right\|_{L^2(I)}\left\|q\nabla\overline{ w_4}\right\|_{L^2(I)}\nonumber\\
&\leq C \left(\left\|\partial_{\nu}(\Delta w_1)\right\|_{L^2(I)}+\left\|\partial_{\nu} w_3\right\|_{L^2(I)}\right)\left\|\nabla w_4\right\|_{L^2(I)}.
\end{align}
From \eqref{5equa} it holds that $\Delta w_4=\frac{1}{\beta_2}(i\lambda\rho_2 w_5-\rho_2f_5)$. Because of $ w_1= w_4$ on $I$,   elliptic regularity for the Dirichlet-Laplace operator (Theorem~\ref{Cor Regu boundaryProbNOnHom})
yields
\begin{align}\label{18equa}
\left\| w_4\right\|_{H^2(\Omega_2)}&\leq C\left(\left\|i\lambda\rho_2 w_5-\rho_2f_5\right\|_{L^2(\Omega_2)}+\left\| w_1\right\|_{H^{3/2}(I)}\right)\nonumber\\
&\leq C \left(|\lambda|\left\| w_5\right\|_{L^2(\Omega_2)}+\left\|f_5\right\|_{L^2(\Omega_2)}+\left\| w_1\right\|_{H^2(\Omega_1)}\right)\nonumber\\
&\leq C \left(|\lambda|\left\|\omega\right\|_{\h}+\left\|f\right\|_{\h}\right).
\end{align}
\textcolor{black}{Applying Lemma \ref{Lemma_01} to $\nabla w_4$}, we see that
\begin{align}\label{19equa}
\textcolor{black}{\left\| \nabla w_4\right\|_{L^2(I)}}&\leq C \left\| w_4\right\|^{1/2}_{H^2(\Omega_2)}\left\| w_4\right\|^{1/2}_{H^1(\Omega_2)}\nonumber\\
&\leq C \left(|\lambda|\left\|w\right\|_{\h}+\left\|f\right\|_{\h}\right)^{1/2}\left\|w\right\|^{1/2}_{\h}\nonumber\\
&\leq C \left(|\lambda|^{1/2}\left\|w\right\|_{\h}+\left\|w\right\|^{1/2}_{\h}\left\|f\right\|^{1/2}_{\h}\right).
\end{align}
Note that $ w_3$ belongs to $H^2(\Omega_1)$ and  is a solution of the  problem
$$
\begin{cases}
\Delta w_3=\dfrac{\rho_0}{\beta_0}(i\lambda w_3-\rho_0^{-1}\mu\Delta w_2-f_5)\eqqcolon h^*\in L^2(\Omega_1),\\
\partial_{\nu} w_3+\kappa  w_3=0 \ \textup{ on } \ \Gamma,\\
 w_3=0 \ \textup{ on } \ I.
\end{cases}
$$
By Lemma \ref{Lem ParameterEllipOps} and Theorem \ref{Cor Regu boundaryProbNOnHom}, there exist $\eta_0>0$ such that
\begin{align*}
\left\| w_3\right\|_{H^2(\Omega_1)}&\leq C \left\|\eta_0 w_3-h^*\right\|_{L^2(\Omega_1)}\\
&\leq C \biggl(\left\| w_3\right\|_{L^2(\Omega_1)}+|\lambda|\left\| w_3\right\|_{L^2(\Omega_1)}+\left\|\Delta w_2\right\|_{L^2(\Omega_1)}+\left\|f_5\right\|_{L^2(\Omega_1)}\biggr).
\end{align*}
From \eqref{equality1} and the last inequality, it follows that
\begin{align*}
\left\| w_3\right\|_{H^2(\Omega_1)}&\leq C \biggl(\left\| w_3\right\|_{H^1_D}+|\lambda|\left\| w_3\right\|_{H^1_D}+|\lambda|\left\|\Delta w_1\right\|_{L^2(\Omega_1)}\\
&\phantom{aaaaaa}+\left\|\Delta f_1\right\|_{L^2(\Omega_1)}+\left\|f_5\right\|_{L^2(\Omega_1)}\biggr).
\end{align*}
This and \eqref{dissip} imply
\begin{align}\label{ecua10}
\left\| w_3\right\|_{H^2(\Omega_1)}\leq C \biggr[(1+|\lambda|)\left\|\omega\right\|^{1/2}_{\h}\left\|F\right\|^{1/2}_{\h}
+|\lambda|\left\|\omega\right\|_{\h}+\left\|F\right\|_{\h}\biggr].
\end{align}
Due to \eqref{7equa}, $ w_1$ satisfies the equation
$$
(\eta_0+\Delta^2) w_1=\eta_0 w_1+\beta_1^{-1}z
$$
with $z\coloneqq (\lambda^2\rho_1-\lambda^2\gamma\Delta+i\lambda\rho\Delta) w_1-\mu\Delta w_3+(i\lambda\rho_1-i\lambda\gamma\Delta-\rho\Delta )f_1+(\rho_1-\gamma\Delta)f_2$ and $\eta_0>0$. Note that, if $g\coloneqq {\mathcal A} f$ then $g_1=f_2$ and therefore
\begin{equation}\label{20equa}
\left\|\Delta f_2\right\|_{L^2(\Omega_1)}=\left\|\Delta g_1\right\|_{L^2(\Omega_1)}\leq C \left\|g\right\|_{\h}=C\left\|{\mathcal A} f\right\|_{\h}.
\end{equation}
From Theorem~\ref{Cor Regu boundaryProbNOnHom}, the transmission conditions,  inequalities \eqref{ecua10}, \eqref{18equa}, \eqref{20equa} and $0\in\rho({\mathcal A})$, we obtain
\begin{align*}
\left\| w_1\right\|_{H^4(\Omega_1)}&\leq C \Big(\left\|\eta_0 w_1+\beta_1^{-1}z\right\|_{L^2(\Omega_1)}+\left\|\beta_1^{-1}(-\beta_2\partial_{\nu} w_4-\mu\partial_{\nu} w_3)\right\|_{H^{1/2}(I)}\Big)\\
&\leq C \Big(\lambda^2\left\| w\right\|_{\h}+|\lambda|\left\|w\right\|^{1/2}_{\h}\left\|f\right\|^{1/2}_{\h} +|\lambda|\left\|f\right\|_{\h}+|\lambda|\left\|{\mathcal A} f\right\|_{\h}\Big)\\
&\leq C |\lambda|\Big(|\lambda|\left\| w\right\|_{\h}+\left\|w\right\|^{1/2}_{\h}\left\|{\mathcal A} f\right\|^{1/2}_{\h}+\left\|{\mathcal A} f\right\|_{\h}\Big).
\end{align*}
We have from \eqref{12equa} that
$$
\left\| w_1\right\|_{H^1(\Omega_1)}\leq C |\lambda|^{-1}\Big(\left\|w\right\|^{1/2}_{\h}\left\|f\right\|^{1/2}_{\h}+\left\|f\right\|_{\h}\Big).
$$
\textcolor{black}{Applying Lemma \ref{Lemma_01} to $\partial_\nu(\Delta w_1)$}, using interpolation inequality and $0\in\rho({\mathcal A})$, we obtain that
\begin{equation}\label{21equa}
\begin{aligned}
\textcolor{black}{\| \partial_\nu (\Delta w_1)}&\textcolor{black}{\|_{L^2(I)}}\leq C \left\| w_1\right\|^{5/6}_{H^4(\Omega_1)}\left\| w_1\right\|^{1/6}_{H^1(\Omega_1)}\\
&\leq C |\lambda|^{5/6}\big(|\lambda|\left\| w\right\|_{\h}+\left\| w\right\|^{1/2}_{\h}\left\|{\mathcal A} f\right\|^{1/2}_{\h}+\left\|{\mathcal A} f\right\|_{\h}\big)^{5/6}\\
&\quad\cdot|\lambda|^{-1/6}\big(\left\| w\right\|^{1/2}_{\h}\left\|{\mathcal A} f\right\|^{1/2}_{\h}+\left\|{\mathcal A} f\right\|_{\h}\big)^{1/6}\\
&\leq C |\lambda|^{2/3}\big(|\lambda|^{5/6}\left\| w\right\|^{5/6}_{\h}\left\|{\mathcal A} f\right\|^{1/6}_{\h}+|\lambda|^{5/6}\left\| w\right\|^{11/12}_{\h}\left\|{\mathcal A} f\right\|^{1/12}_{\h}+\left\|{\mathcal A} f\right\|_{\h}\\
&\quad+\left\| w\right\|^{5/12}_{\h}\left\|{\mathcal A} f\right\|^{7/12}_{\h}+\left\| w\right\|^{1/2}_{\h}\left\|{\mathcal A} f\right\|^{1/2}_{\h}+\left\| w\right\|^{1/12}_{\h}\left\|{\mathcal A} f\right\|^{11/12}_{\h}\big).
\end{aligned}
\end{equation}
From \eqref{19equa} and \eqref{21equa}, it follows that
\begin{align}\label{22equa}
\begin{split}
 \textcolor{black}{\| \partial_\nu(\Delta w_1)}&\textcolor{black}{ \|_{L^2(I)}\left\| \nabla w_4\right\|_{L^2(I)}}\\
&\leq C\big(|\lambda|^2\left\| w\right\|^{11/6}_{\h}\left\|{\mathcal A} f\right\|^{1/6}_{\h}+|\lambda|^{3/2}\left\| w\right\|^{4/3}_{\h}\left\|{\mathcal A} f\right\|^{2/3}_{\h}\\
&\quad\quad\;+|\lambda|^2\left\| w\right\|^{23/12}_{\h}\left\|{\mathcal A} f\right\|^{1/12}_{\h}+|\lambda|^{3/2}\left\| w\right\|^{17/12}_{\h}\left\|{\mathcal A} f\right\|^{7/12}_{\h}\\
&\quad\quad\;+|\lambda|^{7/6}\left\| w\right\|^{17/12}_{\h}\left\|{\mathcal A} f\right\|^{7/12}_{\h}+|\lambda|^{2/3}\left\| w\right\|^{11/12}_{\h}\left\|{\mathcal A} f\right\|^{13/12}_{\h}\\
&\quad\quad\;+|\lambda|^{7/6}\left\| w\right\|^{3/2}_{\h}\left\|{\mathcal A} f\right\|^{1/2}_{\h}+|\lambda|^{2/3}\left\| w\right\|_{\h}\left\|{\mathcal A} f\right\|_{\h}\\
&\quad\quad\;+|\lambda|^{7/6}\left\| w\right\|_{\h}\left\|{\mathcal A} f\right\|_{\h}+|\lambda|^{2/3}\left\| w\right\|^{1/2}_{\h}\left\|{\mathcal A} f\right\|^{3/2}_{\h}\\
& \quad\quad\;+|\lambda|^{7/6}\left\| w\right\|^{13/12}_{\h}\left\|{\mathcal A} f\right\|^{11/12}_{\h}+|\lambda|^{2/3}\left\| w\right\|^{7/12}_{\h}\left\|{\mathcal A} f\right\|^{17/12}_{\h}\big).
\end{split}
\end{align}
\textcolor{black}{Applying Lemma \ref{Lemma_01} to $\partial_\nu w_3$ and using \eqref{dissip} and \eqref{ecua10}, we get}
\begin{align}\label{23a_equa}
 \textcolor{black}{\Vert }&\textcolor{black}{\partial_\nu w_3 \Vert_{L^2(I)}}\nonumber\\
 & \textcolor{black}{\leq C\Vert w_3\Vert_{H^2(\Omega_1)}^{1/2}\Vert w_3\Vert_{H^1(\Omega_1)}^{1/2}}\nonumber\\
 & \textcolor{black}{\leq C\Big( \Vert w\Vert_{\h}^{1/2}\Vert f\Vert_{\h}^{1/2} + \vert \lambda\vert^{1/2}\Vert w\Vert_{\h}^{1/2}\Vert f\Vert_{\h}^{1/2} + \vert \lambda\vert^{1/2}\Vert w\Vert_{\h}^{3/4}\Vert f\Vert_{\h}^{1/4} + \Vert w\Vert_{\h}^{1/4}\Vert f\Vert_{\h}^{3/4}\Big).}
\end{align}
\textcolor{black}{Then, from  \eqref{19equa} and \eqref{23a_equa} we obtain}
\begin{align}\label{23equa}
\begin{split}
&\textcolor{black}{\left\| \partial_\nu w_3\right\|_{L^2(I)}\left\| \nabla w_4\right\|_{L^2(I)}}\\
&\quad\leq C\big(|\lambda|\left\| w\right\|^{3/2}_{\h}\left\|{\mathcal A} f\right\|^{1/2}_{\h}+|\lambda|^{1/2}\left\| w\right\|_{\h}\left\|{\mathcal A} f\right\|_{\h}\\
&\qquad\quad\;+|\lambda|\left\| w\right\|^{7/4}_{\h}\left\|{\mathcal A} f\right\|^{1/4}_{\h}+|\lambda|^{1/2}\left\| w\right\|^{5/4}_{\h}\left\|{\mathcal A} f\right\|^{3/4}_{\h}\\
&\qquad\qquad\;\;\;+|\lambda|^{1/2}\left\| w\right\|^{5/4}_{\h}\left\|{\mathcal A} f\right\|^{3/4}_{\h}+\left\| w\right\|^{3/4}_{\h}\left\|{\mathcal A} f\right\|^{5/4}_{\h}\big).
\end{split}
\end{align}
\textcolor{black}{Considering \eqref{22equa}, \eqref{23equa} and Young's inequality, we observe that the worst term in the estimate of the right side of \eqref{17equa} is
$$ \vert\lambda\vert^2\Vert w\Vert_{\h}^{23/12}\Vert \mathcal{A}f\Vert_{\h}^{1/12} = \Vert w\Vert_{\h}^{23/12}\big(\vert\lambda\vert^{24}\Vert \mathcal{A}f\Vert_{\h}\big)^{1/12} \leq \varepsilon\Vert w\Vert_{\h}^2 + C_\varepsilon\vert \lambda \vert^{48}\Vert \mathcal{A}f\Vert_{\h}^2. $$
This implies that \eqref{16equa} holds}. Now, by \eqref{13equa}, \eqref{14equa}, \eqref{16equa} and Young's inequality, we obtain
$$
\left\| w\right\|^2_{\h}\leq\varepsilon\left\| w\right\|^2_{\h}+C_{\varepsilon}|\lambda|^{48}\left\|{\mathcal A} f\right\|^2_{\h} \ \textup{ for any } \ \varepsilon>0.
$$
Hence,
\begin{equation}\label{5-3}
\left\| w\right\|_{\h}\leq C|\lambda|^{24}\left\|{\mathcal A} f\right\|_{\h},
\end{equation}
which shows \eqref{5-2} with $\beta'=1$ and $\beta=24$ for the case $\gamma>0$. Note that \eqref{5-3} implies that $i\lambda - \mathcal{A}$ is injective. In fact, let $w\in D(\mathcal{A})$ with $f:=(i\lambda -\mathcal{A})w = 0$. Then $\mathcal{A}w = i\lambda w\in D(\mathcal{A})$,    which shows $w\in D(\mathcal{A}^2)$, and we can apply \eqref{5-2} to see $w=0$. Therefore  $i\R\cap\sigma({\mathcal A})=\emptyset$. Now, the assertion follows from  \cite{munoz2017racke}, Lemma~5.2.

For the case $\gamma=0$, we can argue analogously and obtain \eqref{5-2} with $\beta'=0$ and $\beta=24$.
\end{proof}

\begin{obser}
a) Note that the proof gives no information on the optimal decay rate.

b) It was shown in \cite{martinez2019regularity}, Theorem 3.2 and Theorem 5.2, that also in the isothermal situation we have polynomial, but no exponential stability if the membrane is undamped.
\end{obser}


\bibliographystyle{abbrv}

\begin{thebibliography}{10}

\bibitem{Agranovich15}
M.~S. Agranovich.
\newblock {\em Sobolev spaces, their generalizations and elliptic problems in
  smooth and {L}ipschitz domains}.
\newblock Springer Monographs in Mathematics. Springer, Cham, 2015.
\newblock Revised translation of the 2013 Russian original.

\bibitem{AGRANOVICH1964ELLIPTIC}
M.~S. Agranovich and M.~I. Vishik.
\newblock Elliptic problems with a parameter and parabolic problems of general
  type.
\newblock {\em Russian Mathematical Surveys}, 19(3):53--157, 1964.

\bibitem{ammari2010stabilization}
K.~Ammari and S.~Nicaise.
\newblock Stabilization of a transmission wave/plate equation.
\newblock {\em Journal of Differential Equations}, 249(3):707--727, 2010.

\bibitem{avalos1998exponential}
G.~Avalos and I.~Lasiecka.
\newblock Exponential {S}tability of a {T}hermoelastic {S}ystem with {F}ree
  {{B}oun\-da\-ry} {C}onditions without {M}echanical {D}issipation.
\newblock {\em SIAM Journal on Mathematical Analysis}, 29(1):155--182, 1998.

\bibitem{balmes2002tools}
E.~Balm{\`e}s and S.~Germ{\`e}s.
\newblock Tools for {V}iscoelastic {D}amping {T}reatment {D}esign.
  {A}pplication to an {A}utomotive {F}loor {P}anel.
\newblock In {\em International Seminar on Modal Analysis, Leuven, Belgium},
  2002.

\bibitem{martinez2019regularity}
B.~Barraza~Mart{\'\i}nez, R.~Denk, J.~Hern{\'a}ndez~Monz{\'o}n,
  F.~Kammerlander, and M.~Nendel.
\newblock Regularity and asymptotic behavior for a damped plate–-membrane
  transmission problem.
\newblock {\em Journal of Mathematical Analysis and Applications},
  474(2):1082--1103, 2019.

\bibitem{Bienvenido2019Vector-valued}
B.~Barraza~Mart{\'i}nez, J.~Gonz{\'a}lez~Ospino, and
  J.~Hern{\'a}ndez~Monz{\'o}n.
\newblock {\em Vector-valued function and distribution spaces on the torus}.
\newblock Editorial Universidad del Norte, 2019.

\bibitem{borsuk2010transmission}
M.~Borsuk.
\newblock {\em Transmission Problems for Elliptic Second-Order Equations in
  Non-Smooth Domains}.
\newblock Frontiers in Mathematics. Springer Science \& Business Media, 2010.

\bibitem{chueshov2008attractors}
I.~Chueshov and I.~Lasiecka.
\newblock Attractors and {L}ong {T}ime {B}ehavior of von {K}arman
  {T}hermoelastic {P}lates.
\newblock {\em Applied Mathematics and optimization}, 58(2):195--241, 2008.

\bibitem{dell2013stability}
F.~Del{l'}Oro, J.~E. Mu{\~n}oz~Rivera, and V.~Pata.
\newblock Stability properties of an abstract system with applications to
  linear thermoelastic plates.
\newblock {\em Journal of Evolution Equations}, 13(4):777--794, 2013.

\bibitem{denk2018exponential}
R.~Denk and F.~Kammerlander.
\newblock Exponential stability for a coupled system of damped--undamped plate
  equations.
\newblock {\em IMA Journal of Applied Mathematics}, 83(2):302--322, 2018.

\bibitem{FERNANDEZSARE20197085}
H.~D. Fern{\'a}ndez~Sare, Z.~Liu, and R.~Racke.
\newblock Stability of abstract thermoelastic systems with inertial terms.
\newblock {\em Journal of Differential Equations}, 267(12):7085 -- 7134, 2019.

\bibitem{hassine2016asymptotic}
F.~Hassine.
\newblock Asymptotic behavior of the transmission {E}uler-{B}ernoulli plate and
  wave equation with a localized {K}elvin-{V}oigt damping.
\newblock {\em Discrete and Continuous Dynamical Systems - B}, 21:1757--1774,
  2016.

\bibitem{hernandez2005system}
J.~Hern\'andez~Monz\'on.
\newblock A system of semilinear evolution equations with homogeneous
  {boun\-da\-ry} conditions for thin plates coupled with membranes.
\newblock In {\em Proceedings of the 2003 {C}olloquium on {D}ifferential
  {E}quations and {A}pplications}, volume~13 of {\em Electron. J. Differ. Equ.
  Conf.}, pages 35--47. Southwest Texas State Univ., San Marcos, TX, 2005.

\bibitem{kleinman1988single}
R.~E. Kleinman and P.~A. Martin.
\newblock On {S}ingle {E}quations for the {T}ransmission {P}roblem of
  {A}coustics.
\newblock {\em SIAM Journal on Applied Mathematics}, 48(2):307--325, 1988.

\bibitem{lagnese1994modeling}
J.~Lagnese, G.~Leugering, and E.~Schmidt.
\newblock {\em Modelling, Analysis and Control of Dynamic Elastic Multi-Link
  Structures}.
\newblock Systems \& Control: Foundations \& Applications. Birkh{\"a}user
  Basel, 1994.

\bibitem{lagnese1988modelling}
J.~E. Lagnese and J.~L. Lions.
\newblock {\em Modelling analysis and control of thin plates}.
\newblock Recherches en math{\'e}matiques appliqu{\'e}es. Masson, 1988.

\bibitem{lasiecka2017global}
I.~Lasiecka, M.~Pokojovy, and X.~Wan.
\newblock Global existence and exponential stability for a {non\-li\-ne\-ar}
  thermoelastic {K}irchhoff--{L}ove plate.
\newblock {\em Nonlinear Analysis: Real World Applications}, 38:184--221, 2017.

\bibitem{lasiecka1993sharp}
I.~Lasiecka and R.~Triggiani.
\newblock Sharp trace estimates of solutions to {K}irchhoff and
  {E}uler-{B}ernoulli equations.
\newblock {\em Applied Mathematics and Optimization}, 28(3):277--306, 1993.

\bibitem{leissa1969vibration}
A.~W. Leissa.
\newblock {\em Vibration of Plates}.
\newblock NASA SP. Scientific and Technical Information Division, National
  Aeronautics and Space Administration, 1969.

\bibitem{liu2006existence}
H.~Liu and N.~Su.
\newblock Existence and uniform decay of solutions for a class of generalized
  plate-membrane-like systems.
\newblock {\em International Journal of Mathematics and Mathematical Sciences},
  2006:1--24, 2006.

\bibitem{liu1999semigroups}
Z.~Liu and S.~Zheng.
\newblock {\em Semigroups associated with dissipative systems}.
\newblock Research Notes in Mathematics. Chapman and Hall/CRC, 1999.

\bibitem{lunardi2018interpolation}
A.~Lunardi.
\newblock {\em Interpolation Theory}, volume~16 of {\em Lecture Notes (Scuola
  Normale Superiore)}.
\newblock Edizioni della Normale, {T}hird edition, 2018.

\bibitem{Mitidieri1993}
E.~Mitidieri.
\newblock A rellich type identity and applications.
\newblock {\em Communications in Partial Differential Equations},
  18(1-2):125--151, 1993.

\bibitem{munoz2007asymptotic}
J.~E. Mu{\~n}oz~Rivera and M.~G. Naso.
\newblock About {A}symptotic {B}e\-ha\-vior for a {T}ransmission {P}roblem in
  {H}yperbolic {T}hermoelasticity.
\newblock {\em Acta Applicandae Mathematicae}, 99(1):1--27, 2007.

\bibitem{rivera2004transmission}
J.~E. Mu{\~n}oz~Rivera and H.~Portillo~Oquendo.
\newblock A transmission problem for thermoelastic plates.
\newblock {\em Quarterly of Applied Mathematics}, 62(2):273--293, 2004.

\bibitem{munoz2017racke}
J.~E. Mu{\~n}oz~Rivera and R.~Racke.
\newblock Transmission problems in (thermo)viscoelasticity with
  {K}elvin--{V}oigt damping: Nonexponential, strong, and polynomial stability.
\newblock {\em SIAM Journal on Mathematical Analysis}, 49(5):3741--3765, 2017.

\bibitem{rivera2017large}
J.~E. Mu{\~n}oz~Rivera and J.~C. Vega.
\newblock Large time behaviour for non-simple thermoelasticity with second
  sound.
\newblock {\em Electronic Journal of Differential Equations}, 2017(259):1--7,
  2017.

\bibitem{Ouhabaz05}
E.~M. Ouhabaz.
\newblock {\em Analysis of heat equations on domains}, volume~31 of {\em London
  Mathematical Society Monographs Series}.
\newblock Princeton University Press, Princeton, NJ, 2005.

\bibitem{Pruss1984C_0semigroup}
J.~Pr{\"u}ss.
\newblock On the spectrum of ${C}_0$-semigroups.
\newblock {\em Trans. Amer. Math. Soc.}, 284(2):847--857, 1984.

\bibitem{roy2006damping}
N.~Roy, S.~Germ{\`e}s, B.~Lefebvre, and E.~Balm{\`e}s.
\newblock Damping {A}llocation in {A}utomotive {S}tructures using {R}educed
  {M}odels.
\newblock In {\em Proceedings of ISMA}, pages 3915--3924, 2006.

\bibitem{Triebel78}
H.~Triebel.
\newblock {\em Interpolation Theory, Function Spaces, Differential Operators},
  volume~18 of {\em North-Holland {Ma\-the\-ma\-ti\-cal} Library}.
\newblock North-Holland Publishing Company, 1978.

\end{thebibliography}

\end{document}